\documentclass[12pt,a4paper]{amsart}
\usepackage{amsfonts}
\usepackage{amsthm}
\usepackage{amsmath}
\usepackage{amscd}
\usepackage[latin2]{inputenc}
\usepackage{graphicx}
\numberwithin{equation}{section}
\usepackage[margin=2.6cm]{geometry}
\usepackage{dsfont}
\usepackage{enumitem}
\usepackage{color}
\usepackage[toc]{appendix}
\usepackage{bm}

\makeatletter
\newcommand{\xRightarrow}[2][]{\ext@arrow 0359\Rightarrowfill@{#1}{#2}}
\makeatother

\makeatletter
\newcommand*{\rom}[1]{\expandafter\@slowromancap\romannumeral #1@}
\makeatother
\newcommand{\kc}{\mathcal{C}}

\theoremstyle{plain}
\newtheorem{theorem}{Theorem}[section]
\newtheorem{Lemma}[theorem]{Lemma}
\newtheorem{Cor}[theorem]{Corollary}
\newtheorem{Prop}[theorem]{Proposition}

\theoremstyle{definition}

\newtheorem{Rem}[theorem]{Remark}
\newtheorem{?}[theorem]{Problem}
\newtheorem{Ex}[theorem]{Example}

\usepackage{hyperref}
\sloppy
\begin{document}
	\title[Convergence and mass distributions of multivariate Archimedean copulas]{On convergence and mass 
	distributions of multivariate Archimedean copulas and their interplay with the Williamson transform}
	\author[]{Thimo M. Kasper, Nicolas Dietrich and Wolfgang Trutschnig}
	\address{Thimo M. Kasper\\ University of Salzburg \\ Department for Artificial Intelligence and Human Interfaces \\ Hellbrunnerstrasse 34, 5020 Salzburg}
	\email{thimo.kasper@plus.ac.at}
		\address{Nicolas Dietrich\\ University of Salzburg \\ Department for Artificial Intelligence and Human Interfaces \\ Hellbrunnerstrasse 34, 5020 Salzburg}
	\email{nicolaspascal.dietrich@plus.ac.at}
	\address{Wolfgang Trutschnig\\ University of Salzburg \\ Department for Artificial Intelligence and Human Interfaces \\ Hellbrunnerstrasse 34, 5020 Salzburg}
	\email{wolfgang.trutschnig@plus.ac.at}
	
	\begin{abstract} 
		Motivated by a recently established result saying that within the class of bivariate Archimedean copulas 
		standard pointwise convergence implies weak convergence of almost all conditional distributions 
		this contribution studies the class $\mathcal{C}_{ar}^d$ of all $d$-dimensional Archimedean copulas  
		with $d \geq 3$ and proves the afore-mentioned implication with respect to conditioning on the 
		first $d-1$ coordinates.
		Several proper\-ties equivalent to pointwise convergence in $\mathcal{C}_{ar}^d$ 
		are established and - as by-product of working with conditional distributions (Markov kernels) - alternative simple 
		proofs for the well-known formulas for the level set masses $\mu_C(L_t)$ and the Kendall distribution function 
		$F_K^d$ as well as a novel geometrical interpretation of the latter are provided.   
		Viewing normalized generators $\psi$ of $d$-dimensional Archimedean copulas from the perspective 
		of their so-called Williamson measures $\gamma$ on $(0,\infty)$ is then shown to allow not only to derive 
		surprisingly simple expressions for $\mu_C(L_t)$ and $F_K^d$ in terms of $\gamma$ and to characterize  
		pointwise convergence in $\mathcal{C}_{ar}^d$ by weak convergence of the Williamson measures 
		but also to prove that regularity/singularity properties of $\gamma$ directly carry over to the 
		corresponding copula $C_\gamma \in \mathcal{C}_{ar}^d$. These results are finally used to 
		prove the fact that the family of all absolutely continuous and the family of all 
		singular $d$-dimensional copulas is dense in $\mathcal{C}_{ar}^d$ and to underline that 
		despite of their simple algebraic structure Archimedean copulas may exhibit surprisingly singular behavior 
		in the sense of irregularity of their conditional distribution functions. 
	\end{abstract}
	
	\maketitle
	\tableofcontents
	\vspace*{-0.6cm}

\section{Introduction}
\label{sec:1:intro}

Archimedean copulas are a well-known family of dependence models whose popularity is mainly due to their simple algebraic 
structure: given a so-called (Archimedean, sufficiently monotone) 
generator $\psi: [0,\infty)\to[0,1]=:\mathbb{I}$ and letting $\varphi$ denote its
pseudo-inverse the Archimedean copula $C_\psi$ is defined by
$$
C_\psi(x_1,\ldots,x_d) = \psi(\varphi(x_1) + \cdots + \varphi(x_d)).
$$
As a consequence, analytic, dependence, and convergence properties of Archimedean co\-pulas can be characterized in 
terms of the corresponding generators (see, e.g., \cite{convArch,p21,wcc, multiArchNeslehova,Nelsen}). 
In particular, it was recently shown in \cite{wcc} that within the class of \emph{bivariate} Archimedean copulas 
pointwise convergence is equivalent to uniform convergence of the corresponding generators and, more importantly, 
even implies weak convergence of almost all conditional distributions (a.k.a weak conditional convergence, 
a concept generally much stronger than pointwise convergence).  
This result is surprising insofar that given samples $(X_1,Y_1), (X_2, Y_2), \ldots$ from some bivariate copula $C$ the corresponding sequence of empirical copulas $(\hat{E}_n)_{n\in\mathbb{N}}$ 
does not necessarily converge weakly conditional to $C$.\\
The focus of the current paper is twofold: \emph{on the one hand} we study convergence
in the class $\mathcal{C}_{ar}^d$ of all $d$-dimensional Archimedean copulas, $d \geq 3$,
and show that most results from the bivariate setting as established in \cite{wcc} also 
hold in $\mathcal{C}_{ar}^d$, including the surprising fact that pointwise convergence implies weak conditional 
convergence (whereby we consider conditioning on the first $d-1$ coordinates). 
As a nice by-product of working with Markov kernels (conditional distributions) we obtain simple, alternative proofs 
of the well-known formulas for the Kendall distribution function  $F_K^d$ and the level set masses of Archimedean copulas. 
To the best of the authors' knowledge, working with so-called $\ell_1$-norm symmetric distributions 
(as studied in \cite{multiArchNeslehova}) nowadays seems to be the standard approach for deriving these formulas in the 
multivariate setting - we show that working with Markov kernels constitutes an interesting alternative 
and may provide additional insight.  
Additionally, motivated by \cite[Chapter 4.3]{Nelsen} and \cite[Section 3]{p21} we offer a see\-mingly novel 
geometric interpretation of the level set masses in terms of $\psi$. \\
And, \emph{on the other hand}, we revisit the close interrelation between Archimedean copulas and  
probability measures $\gamma$ on $(0,\infty)$ via the so-called Williamson transform as studied in 
\cite{multiArchNeslehova} (also see \cite{schilling2012}), characterize properties of the generator 
$\psi$ in terms of normalized $\gamma$ (to which we will refer to as Williamson measure) and 
then prove the fact that pointwise convergence in $\mathcal{C}_{ar}^d$ is equivalent to weak 
convergence of the correspon\-ding probability measures on $(0,\infty)$. 
Moreover, we derive surprisingly simple expressions for the 
level set masses and the Kendall distribution functions in terms of $\gamma$ and then show that 
singularity/regularity properties of $\gamma$ directly carry over to the corresponding Archimedean copula 
$C_\gamma \in \mathcal{C}_{ar}^d$. This very property is then used, firstly, to show that the family of 
absolutely continuous as well as two disjoint subclasses of the family of all singular Archimedean copulas are 
dense in $\mathcal{C}_{ar}^d$ and, secondly, to illustrate the fact that despite their simple and handy 
algebraic structure Archimedean copulas may exhibit surpri\-singly irregular behavior by constructing 
elements of $\mathcal{C}_{ar}^d$ which have full support 
although being singular (see \cite{p21} for the already established bivariate results).  \\

The rest of this contribution is organized as follows: Section \ref{sec:2:preliminaries} contains notation and preliminaries that are used throughout the text. Section \ref{sec:3:mass:distri} starts with deriving 
an explicit expression for the Markov kernel of $d$-dimensional Archimedean copulas, then restates the well-known 
formulas for the masses of level sets as well as the Kendall distribution function, and provides a 
geometric interpretation for the latter in terms of the generator $\psi$. 
In Section \ref{sec:convergence} we derive various characterizations of pointwise convergence in 
$\mathcal{C}_{ar}^d$ in several steps and prove the main result saying that pointwise convergence 
implies weak conditional convergence (with respect to the first $d-1$ coordinates). 
Turning to the Williamson transform, in Section \ref{sec5:williamson} we first establish some 
complementing results on the interrelation of the generator $\psi$ and the Williamson measure $\gamma$, then 
characterize pointwise convergence in $\mathcal{C}_{ar}^d$ in terms of weak convergence of the Williamson measures, and finally show how regularity/singularity properties of $\gamma$ carry over to $C_\gamma$. These properties are
then used to prove the afore-mentioned denseness results and to construct singular Archimedean copulas 
with full support. Several examples and graphics illustrate the obtained results.

\section{Notation and preliminaries}
\label{sec:2:preliminaries}

In the sequel we will let $\mathcal{C}^d$ denote the family of all $d$-dimensional copulas for some fixed integer $d\geq 3$ and write vectors in bold symbols. For each copula $C\in\mathcal{C}^d$ the corresponding $d$-stochastic measure will be
denoted by $\mu_C$, i.e., $\mu_C([\mathbf{0},\mathbf{x}]) = C(\mathbf{x})$ 
for all $\mathbf{x} \in \mathbb{I}^d$, whereby $[\mathbf{0},\mathbf{x}] := [0,x_1] \times [0,x_2] \times\ldots\times [0,x_d]$ and $\mathbb{I} := [0,1]$. To notation as simple as possible we will frequently write 
$\mathbf{x}_{1:m}=(x_1,\ldots,x_m)$ for $\mathbf{x} \in \mathbb{I}^d$ and $m \leq d$.
Considering $1\leq i < j \leq d$, the $i$-$j$-marginal of $C$ will be denoted by $C^{ij}$, i.e., we have 
$C^{ij}(x_i, x_j) = C(1,\ldots, 1,x_i,1,\ldots,1,x_j,1,\ldots,1)$. In order to keep notation as simple as possible
for every $m<d$ the marginal copula of the first $m$ coordinates will be denoted by $C^{1:m}$, i.e., 
$C^{1:m}(x_1,x_2,\ldots, x_m) = C(x_1,x_2,\ldots, x_m, 1,\ldots, 1)$. 
Considering the uniform metric $d_\infty$ on $\mathcal{C}^d$ it is well-known that $(\mathcal{C}^d, d_\infty)$ is a 
compact metric space and that in $\mathcal{C}^d$ pointwise and uniform convergence are equivalent. 
For more background on copulas and $d$-stochastic measures we refer to \cite{Principles, Nelsen}. 

For every metric space $(S,d)$ the Borel $\sigma$-field on $S$ will be denoted by $\mathcal{B}(S)$ and 
$\mathcal{P}(S)$ denotes the family of all probability measures on $\mathcal{B}(S)$. 
The Lebesgue measure on $\mathcal{B}(\mathbb{I}^d)$ will be denoted by $\lambda$ or (whenever particular emphasis to 
the dimension $d$ is required) by $\lambda_{d}$. Furthermore $\delta_x$ denotes the Dirac measure in $x \in S$.

In what follows Markov kernels will play a prominent role. For $m<d$ an $m$-\emph{Markov kernel} from $\mathbb{R}^m$ to $\mathbb{R}^{d-m}$ is a mapping $K: \mathbb{R}^m\times\mathcal{B}(\mathbb{R}^{d-m}) \rightarrow \mathbb{I}$ fulfilling
that for every fixed $E\in\mathcal{B}(\mathbb{R}^{d-m})$ the mapping $\mathbf{x}\mapsto K(\mathbf{x},E)$ is $\mathcal{B}(\mathbb{R}^{m})$-$\mathcal{B}(\mathbb{R}^{d-m})$-measurable and for every fixed $\mathbf{x}\in\mathbb{R}^m$ the mapping $E\mapsto K(\mathbf{x},E)$ is a probability measure on $\mathcal{B}(\mathbb{R}^{d-m})$. 
Given a $(d-m)$-dimensional random vector $\mathbf{Y}$ and an $m$-dimensional random vector $\mathbf{X}$ on a 
probability space $(\Omega, \mathcal{A}, \mathbb{P})$
we say that a Markov kernel $K(\cdot,\cdot)$ is a regular conditional distribution of
$\mathbf{Y}$ given $\mathbf{X}$ if for every fixed $E \in \mathcal{B}(\mathbb{R}^{d-m})$ the identity 
$$
K(\mathbf{X}(\omega), E) = \mathbb{E}(\mathbf{1}_E \circ \mathbf{Y} | \mathbf{X})(\omega)
$$
holds for $\mathbb{P}$-almost every $\omega \in \Omega$.  
It is well-known that for each pair of random vectors $(\mathbf{X}, \mathbf{Y})$ as 
above, a regular conditional distribution $K(\cdot, \cdot)$ of $\mathbf{Y}$ given $\mathbf{X}$ exists and is 
unique for $\mathbb{P}^{\mathbf{X}}$-almost all $\mathbf{x} \in \mathbb{R}^m$, 
whereby as usual $\mathbb{P}^{\mathbf{X}}$ denotes the push-forward of $\mathbb{P}$ via $\mathbf{X}$.
In case $(\mathbf{X}, \mathbf{Y})$ has $C\in\mathcal{C}^d$ as distribution function (restricted to $\mathbb{I}^d$) we let $K_{C}:\mathbb{I}^m \times \mathcal{B}(\mathbb{I}^{d-m}) \to \mathbb{I}$ denote (a version of) the regular conditional distribution of $\mathbf{Y}$ given $\mathbf{X}$ and simply refer to it as $m$-\emph{Markov kernel} of $C$.
Defining the $\mathbf{x}$-section $G_\mathbf{x}$ of a set $G \in\mathcal{B}(\mathbb{I}^d)$ w.r.t. the first $m$ coordinates by $G_{\mathbf{x}}:=\{\mathbf{y}
\in \mathbb{I}^{d-m}: (\mathbf{x},\mathbf{y}) \in G\}\in\mathcal{B}(\mathbb{I}^{d-m})$ the well-known 
\emph{disintegration theorem} implies
\begin{align*}
	\mu_C(G) = \int_{\mathbb{I}^m} K_{C}(\mathbf{x},G_{\mathbf{x}})
	\, \mathrm{d}\mu_{C^{1:m}}(\mathbf{x}).
\end{align*}
It is well-known that the disintegration theorem also holds for general finite measures in which case the 
conditional measures $K(\cdot,\cdot)$ are not necessarily probability measures but general finite measures 
(sub- or super Markov kernels). For more background on conditional expectation and disintegration we 
refer to the \cite{Kallenberg} and \cite{Klenke}. 

An Archimedean \emph{generator} $\psi$ is a continuous, non-increasing function $\psi: [0,\infty) \to [0,1]$ fulfilling
$\psi(0) = 1$, $\lim_{z\to\infty} \psi(z) = 0=:\psi(\infty)$ and being strictly decreasing on $[0,\inf\{z\in[0,\infty]: \psi(z) = 0\}]$ (with the convention $\inf \emptyset :=\infty$). 
For every Archimedean generator $\psi$ we will let $\varphi: [0,1] \rightarrow [0,\infty]$ 
denote its pseudo-inverse defined by
$\varphi(y) = \inf\{z\in[0,\infty]: \psi(z) = y\}$ for every $y \in [0,1]$. Obviously $\varphi$ is strictly decreasing 
on $[0,1]$ and fulfills $\varphi(1)=0$, moreover it is straightforward to verify that $\varphi$ is right-continuous 
at $0$ (for a short discussion of this property see Section 4 in \cite{wcc}).
If $\varphi(0)=\infty$ we refer to $\psi$ (and $\varphi$) as \emph{strict} and as \emph{non-strict} otherwise.  
A copula $C\in\mathcal{C}^d$ is called \emph{Archimedean} (in which case we write 
$C\in\mathcal{C}_\text{ar}^d$) if there exists some Archimedean generator $\psi$ with
\begin{align*}
	C_\psi(\mathbf{x}) = \psi(\varphi(x_1) + \cdots + \varphi(x_d))
\end{align*}
for every $\mathbf{x}\in\mathbb{I}^{d}$. 
Following \cite{multiArchNeslehova}, $C_\psi(\mathbf{x}) = \psi(\varphi(x_1) + \cdots + \varphi(x_d))$ 
is a $d$-dimensional copula if, and only if, $\psi$ is a $d$-monotone Archimedean 
generator on $[0,\infty)$, i.e., if, and only if, 
$\psi$ is an Archimedean generator fulfilling that $(-1)^{d-2}\psi^{(d-2)}$ exists on $(0,\infty)$, is non-negative, 
non-increasing and convex on $(0,\infty)$, whereby, as usual, $g^{(m)}$ denotes the $m$-th derivative of a function $g$.
Moreover (again see \cite{multiArchNeslehova}) it is straightforward to verify that in the latter 
case $(-1)^{m}\psi^{(m)}$ exists on $(0,\infty)$, 
is non-negative, non-increasing and convex on $(0,\infty)$ for every $m \in \{0,\ldots,d-2\}$.
In the sequel we will sometimes simply write $C$ instead of $C_\psi$ when no confusion may arise. 
Furthermore in the sequel we will simply refer to Archimedean generators as `generators'. 

Letting $D^-g$ and $D^+g$ denote the left- and right- hand derivative of a function $g$, respectively, 
convexity of $(-1)^{d-2}\psi^{(d-2)}$ implies that both, $D^-\psi^{(d-2)}(z)$ and $D^+\psi^{(d-2)}(z)$ exist
for every $z\in(0,\infty)$ and that the two derivatives coincide outside an at most countable set (see \cite{KK,pollard}) 
- in fact, for every
continuity point $z$ of $D^-\psi^{(d-2)}$ we have $D^-\psi^{(d-2)}(z)=D^+\psi^{(d-2)}(z)$.
Moreover, every $d$-monotone generator $\psi$ fulfills $\lim_{z \rightarrow \infty} \psi^{(m)}(z) = 0$ for every
$m\in \{0,\ldots,d-2\}$ as well as $\lim_{z \rightarrow \infty} D^{-}\psi^{(d-2)}(z) = 0$. 
Indeed, $\lim_{z \rightarrow \infty} \psi'(z) = 0$ directly follows from monotonicity and convexity of $\psi$ since
$d$-monotonicity implies that $-\psi'$ is decreasing and convex too; proceeding iteratively yields the assertion. 
Notice that Lemma \ref{lem:Left_hand_G} yields an even simpler direct proof of this assertion.
In the sequel we will also use the fact that (again by convexity, see \cite{KK,pollard})
we can reconstruct the generator $\psi$ from its derivatives in the sense that ($m \in \{1,\ldots,d-2\}$)
\begin{align}\label{eq:up}
\psi^{(m-1)}(z) &= \int_{[z,\infty)} -\psi^{(m)}(s) \ \mathrm{d}\lambda(s) , \quad 
\psi^{(d-2)}(z) = \int_{[z,\infty)} -D^-\psi^{(d-2)}(s) \ \mathrm{d}\lambda(s).
\end{align}
In order to have a one-to-one correspondence between copulas and their generator we follow \cite{wcc} and 
from now on implicitly assume that all generators are normalized in the sense that $\varphi(\frac{1}{2}) = 1$, 
or equivalently, $\psi(1) = \frac{1}{2}$ holds.

According to \cite{multiArchNeslehova}, an Archimedean copula $C\in\mathcal{C}_\text{ar}^d$ is absolutely continuous if, 
and only if, $\psi^{(d-1)}$ exists and is absolutely continuous on $(0,\infty)$. In this case a version of the density 
$c$ of $C$ is given by 
\begin{align}\label{eq:arch:density}
	c(\mathbf{x}) = \mathbf{1}_{(0,1)^d}(\mathbf{x}) \prod_{i=1}^{d}\varphi'(x_i) \cdot D^-\psi^{(d-1)}\big( \varphi(x_1)+\cdots +\varphi(x_d) \big).
\end{align}
In the sequel we will use the handy consequence that lower dimensional marginals of $d$-dimensional Archimedean 
copulas are absolutely continuous (\cite[Proposition 4.1]{multiArchNeslehova}).
\newpage

\section{Markov kernel, mass distribution and Kendall distribution function of multivariate Archimedean copulas}
\label{sec:3:mass:distri}
In the proceedings contribution \cite{multiArchMassKernel} the authors derive an explicit expression for 
(a version of) the $(d-1)$-Markov kernel of $d$-variate Archimedean copulas which, in turn, allows to derive the 
well-known formulas for level-set mass and the Kendall distribution function of $d$-variate Archimedean copulas
in an alternative way. Considering that Markov kernels are key for the rest of this paper we introduce a 
(slightly modified) 
version of the Markov kernel considered in \cite{multiArchMassKernel} and prove its properties in detail. 
Furthermore, we restate the already known formulas for the Kendall distribution function and level-set mass, 
add a new geometric interpretation, and, for the sake of completeness, include the corresponding purely 
Markov kernel-based proofs in the Appendix.

For every $t\in(0,1]$ define the $t$-level set of $C\in\mathcal{C}_\text{ar}^d$ by
\begin{equation}\label{eq:Lt}
	L_t := \left\lbrace (\mathbf{x},y)\in\mathbb{I}^{d-1}\times\mathbb{I}: C(\mathbf{x},y) = t \right\rbrace = \left\lbrace (\mathbf{x},y)\in \mathbb{I}^{d-1}\times \mathbb{I}: \sum_{i=1}^{d-1}\varphi(x_i) + \varphi(y) = \varphi(t) \right\rbrace
\end{equation}
and for $t=0$ set 
\begin{equation}\label{eq:L0}
	L_0 := \left\lbrace (\mathbf{x},y)\in\mathbb{I}^{d-1}\times\mathbb{I}: C(\mathbf{x},y) = 0 \right\rbrace =	\left\lbrace (\mathbf{x}, y)\in \mathbb{I}^{d-1}\times \mathbb{I}: \sum_{i=1}^{d-1} \varphi(x_i) + \varphi(y) \geq \varphi(0) \right\rbrace.
\end{equation}
Subsequently we will work with the level sets of the $(d-1)$-dimensional marginal of $C$ defined analogously 
and denote them by $L_t^{1:d-1}$ and $L_0^{1:d-1}$, respectively. 
As in the bivariate setting (see \cite{wcc}) we can define functions $f^t$ whose graph coincides with $L^t$ for 
$t \in (0,1]$ and with the boundary of $L^0$ for $t=0$: 
In fact, for $t=0$ defining the function $f^0: \mathbb{I}^{d-1}\to\mathbb{I}$ by
\begin{align*}
	f^0(\mathbf{x}) = \begin{cases}
		1 & \text{ if } \mathbf{x}\in L_0^{1:d-1}\\
		\psi\left( \varphi(0)-\sum_{i=1}^{d-1}\varphi(x_i) \right) & \text{ if } \mathbf{x}\not\in L_0^{1:d-1}
	\end{cases}
\end{align*}
with the conventions $\psi(\infty)=0$ as well as $\psi(u) = 1$ for all $u<0$ and for $t\in (0,1]$, defining 
the upper $t$-cut of 
$C^{1:d-1}$ by $[C^{1:d-1}]_t = \lbrace \mathbf{x}\in\mathbb{I}^{d-1}: C^{1:d-1}(\mathbf{x})\geq t\}$
and considering the function 
$f^t: [C^{1:d-1}]_t\to\mathbb{I}$ given by 
\begin{align*}
	f^t(\mathbf{x}) := \psi\left(\varphi(t) - \sum_{i=1}^{d-1}\varphi(x_i)\right).
\end{align*}
yields the above-mentioned property. It is straightforward to verify that 
for $\mathbf{x}\not\in$ $L_0^{1:d-1}$ and $y < f^0(\mathbf{x})$ we have $(\mathbf{x},y)\in L_0$ and that for strict Archimedean copulas $\mathbf{x}\in L_0^{1:d-1}$ if, and only if, 
$M(\mathbf{x}) = 0$ where $M$ denotes the $d$-dimensional minimum copula. 

\begin{theorem}\label{th:multi:kernel}
	Suppose that $C \in\mathcal{C}_\text{ar}^d$ has generator $\psi$. Then setting 
\begin{align}\label{eq:mk1}
	K_C(\mathbf{x}, [0,y]) := \begin{cases}
		1, & M(\mathbf{x}) = 1 \emph{ or } \mathbf{x}\in L_0^{1:d-1} \\
		0, & M(\mathbf{x})<1, \mathbf{x}\not\in L_0^{1:d-1}, y< f^0( \mathbf{x}) \\
		\frac{D^-\psi^{(d-2)}\left(\sum\limits_{i=1}^{d-1}\varphi(x_i) + \varphi(y)\right)}{D^-\psi^{(d-2)}\left(\sum\limits_{i=1}^{d-1}\varphi(x_i)\right)}, & M(\mathbf{x})<1, \mathbf{x}\not\in L_0^{1:d-1}, y\geq f^0(\mathbf{x})
	\end{cases}
\end{align}
	yields (a version of) the $(d-1)$-Markov kernel of $C$.
\end{theorem}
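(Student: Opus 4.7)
The plan is to verify the three defining properties of a Markov kernel in turn: (i) for each $\mathbf{x}\in\mathbb{I}^{d-1}$, the map $E\mapsto K_C(\mathbf{x},E)$ extends to a probability measure on $\mathcal{B}(\mathbb{I})$; (ii) for each Borel set $E$, the map $\mathbf{x}\mapsto K_C(\mathbf{x},E)$ is Borel measurable; and (iii) the disintegration identity
$$
\mu_C([\mathbf{0},\mathbf{x}]\times[0,y])=\int_{[\mathbf{0},\mathbf{x}]} K_C(\mathbf{s},[0,y])\,\mathrm d\mu_{C^{1:d-1}}(\mathbf{s})
$$
holds for every $(\mathbf{x},y)\in\mathbb{I}^d$; since the generating rectangles are closed under finite intersections, a standard monotone class argument then extends the identity to all Borel sets in $\mathcal{B}(\mathbb{I}^d)$.

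For (i), in the nontrivial third case of \eqref{eq:mk1} the map $y\mapsto K_C(\mathbf{x},[0,y])$ is a ratio of two values of $D^-\psi^{(d-2)}$, so its monotonicity in $y$ follows from the monotonicity of $D^-\psi^{(d-2)}$ (itself a consequence of the $d$-monotonicity of $\psi$), and right-continuity in $y$ is inherited from the right-continuity of $\varphi$ together with the left-continuity of $D^-\psi^{(d-2)}$ (using that $\varphi$ is decreasing). The total mass $K_C(\mathbf{x},[0,1])=1$ then follows directly from $\varphi(1)=0$, while in cases one and two the prescription of \eqref{eq:mk1} already manifestly defines a (sub-)probability measure. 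Hence $K_C(\mathbf{x},\cdot)$ extends uniquely to a probability measure on $\mathcal{B}(\mathbb{I})$. Property (ii) is immediate, since the defining expression is a Borel-measurable composition and the exceptional sets $\{\mathbf{x}\in\mathbb{I}^{d-1}:M(\mathbf{x})=1\}$ and $L_0^{1:d-1}$ are closed.

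The core of the argument is (iii). By \cite[Proposition 4.1]{multiArchNeslehova} the marginal $C^{1:d-1}$ is absolutely continuous with density $c^{1:d-1}(\mathbf{s})=\prod_{i=1}^{d-1}\varphi'(s_i)\cdot D^-\psi^{(d-2)}\bigl(\sum_{i=1}^{d-1}\varphi(s_i)\bigr)$ on $(0,1)^{d-1}$, so one first checks pointwise $\lambda_{d-1}$-almost everywhere the single identity
$$
K_C(\mathbf{s},[0,y])\,c^{1:d-1}(\mathbf{s})=\prod_{i=1}^{d-1}\varphi'(s_i)\cdot D^-\psi^{(d-2)}\!\left(\sum_{i=1}^{d-1}\varphi(s_i)+\varphi(y)\right).
$$
All three cases of \eqref{eq:mk1} collapse into this expression: on $L_0^{1:d-1}$ the density $c^{1:d-1}$ already vanishes, and on $\{y<f^0(\mathbf{s})\}\setminus L_0^{1:d-1}$ both $K_C(\mathbf{s},[0,y])$ and $D^-\psi^{(d-2)}(\sum_{i=1}^{d-1}\varphi(s_i)+\varphi(y))$ vanish because $\sum_{i=1}^{d-1}\varphi(s_i)+\varphi(y)>\varphi(0)$. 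Inserting this expression into the right-hand side of the disintegration identity and applying Fubini, the iterated integral is evaluated by $d-1$ successive substitutions $t=A_j+\varphi(s_j)$ combined with the fundamental theorem of calculus --- justified because, by $d$-monotonicity, each $(-1)^m\psi^{(m)}$ with $m\leq d-2$ is convex and hence locally absolutely continuous on $(0,\infty)$, so that $\psi^{(m-1)}$ is a primitive of $\psi^{(m)}$ in the sense of \eqref{eq:up}. At each step the boundary contribution at $s_j=0$ vanishes: in the strict case by $\lim_{z\to\infty}\psi^{(m)}(z)=0$, and in the non-strict case by $\psi^{(m)}\equiv 0$ on $[\varphi(0),\infty)$ for $m\leq d-2$. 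After $d-1$ iterations the integral equals $\psi\bigl(\varphi(y)+\sum_{i=1}^{d-1}\varphi(x_i)\bigr)=C(\mathbf{x},y)$, which is precisely what (iii) requires.

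The main obstacle is the careful case analysis needed to collapse the three lines of \eqref{eq:mk1} into one $\mu_{C^{1:d-1}}$-a.e. expression, together with the verification that every intermediate boundary term vanishes, particularly in the non-strict regime where $\psi^{(d-2)}$ may have a jump of its left-hand derivative at $\varphi(0)$. The substitutions themselves should be routine once one notes that on every compact subset of $(0,1)$ the generator $\varphi$ is absolutely continuous, so the monotone substitution rule applies without further concerns.
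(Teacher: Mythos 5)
Your proposal is correct and follows essentially the same route as the paper's proof: verify the distribution-function and measurability properties, then collapse the three cases of \eqref{eq:mk1} into the single integrand $\prod_{i=1}^{d-1}\varphi'(s_i)\,D^-\psi^{(d-2)}\bigl(\sum_{i=1}^{d-1}\varphi(s_i)+\varphi(y)\bigr)$ using the vanishing of $D^-\psi^{(d-2)}$ beyond $\varphi(0)$ and absolute continuity of $C^{1:d-1}$, and finally evaluate the iterated integral by $d-1$ successive substitutions with vanishing boundary terms via $\psi^{(m)}(\infty)=0$ (resp. $\psi^{(m)}\equiv 0$ on $[\varphi(0),\infty)$). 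The only cosmetic difference is that the paper extends from the sets $[0,y]$ to general Borel sets via an explicit Dynkin-system reference, which your monotone class remark covers equivalently.
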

\begin{proof}
	First notice that absolute continuity of $C^{1:d-1}$ implies 
	\begin{align*}
		0=\mu_{C^{1:d-1}}(L_0^{1:d-1}) &= \mu_C(L_0^{^:d-1}\times\mathbb{I}).
	\end{align*}
	so $\mu_{C^{1:d-1}}(L_0^{1:d-1} \cup M^{-1}(\{1\}))=0$, so the first condition 
	on the right hand side of eq. \eqref{eq:mk1} only holds for a set of $\mu_{C^{1:d-1}}$-measure $0$. 
	
	We start by showing that $K_C$ defined according to eq. \eqref{eq:mk1} is indeed a $(d-1)$-Markov kernel
	and then prove that it is a Markov kernel of $C$. Fix $y\in\mathbb{I}$. Since measurability of $f^0$ and 
	$D^-\psi^{(d-2)}$ are a direct consequence of the properties of $\psi$, using continuity of $\varphi$ yields
	measurability of the mapping $\mathbf{x}\mapsto K_C(\mathbf{x},[0,y])$. Building upon that, applying 
	using a standard Dynkin System argument (see \cite[Theorem 2]{p21}) yields measurability of 
	$\mathbf{x}\mapsto K_C(\mathbf{x},E)$ for every Borel set $E\in\mathcal{B}(\mathbb{I})$. \newline
	As for $\mathbf{x} = 1$ or fixed $\mathbf{x}\in L_0^{1:d-1}$ the map $y\mapsto K_C(\mathbf{x},[0,y])$ is obviously 
 a univariate distribution function consider $\mathbf{x}\not\in L_0^{1:d-1}$. Then monotonicity and left-continuity 
 of $(-1)^{d-2} D^-\psi^{(d-2)}$ implies that $y\mapsto K_C(\mathbf{x}, [0,y])$ is increasing and right-continuous. 
 Moreover we obviously have $K_C(\mathbf{x},[0,1])=1$, so $y\mapsto K_C(\mathbf{x}, [0,y])$ is a univariate 
 distribution function and it remains to show that it is a $(d-1)$-Markov kernel of $C$, i.e., that 
 \begin{align}\label{disint.check}
		C(\mathbf{x},y) &=\int_{[\mathbf{0},\mathbf{x}]} K_C(\mathbf{s},[0,y]) \mathrm{d}\mu_{C^{1:d-1}}(\mathbf{s})
	\end{align}
 holds for all $\mathbf{x} \in \mathbb{I}^{d-1}$ and every $y \in \mathbb{I}$. \newline
	The case $y=1$ is trivial and for $y=0$ we have that $K_C(\mathbf{x},\{0\})=0$ for 
	$\mu_{C^{1:d-1}}$-almost every $\mathbf{x}$, implying that  
	eq. (\ref{disint.check}) holds. 
	 For $y\in(0,1)$ considering that $L_0^{1:d-1}$ is a $\mu_{C^{1:d-1}}$-null set and 
	 using absolute continuity of $\mu_{C^{1:d-1}}$ yields ($\Upsilon:=\mathbb{I}^{d-1}\setminus L_0^{1:d-1}$)\\
	 \begin{align*}
		I &:= \int_{[\mathbf{0},\mathbf{x}]\cap \Upsilon} K_C(\mathbf{s},[0,y]) \ \mathrm{d}\mu_{C^{1:d-1}}(\mathbf{s}) = \int_{[\mathbf{0},\mathbf{x}]\cap \Upsilon} K_C(\mathbf{s},[0,y]) c^{1:d-1}(\mathbf{s}) \ \mathrm{d}\lambda(\mathbf{s}) \\
		&= \int_{\Upsilon\cap \lbrace \mathbf{t}\in(0,1)^{d-1}: y \geq f^0(\mathbf{t}) \rbrace \cap [\mathbf{0},\mathbf{x}]} \tfrac{D^{-}\psi^{(d-2)}(\sum_{i=1}^{d-1}\varphi(s_i)+\varphi(y))}{D^{-}\psi^{(d-2)}(\sum_{i=1}^{d-1}\varphi(s_i))} \prod_{i=1}^{d-1}\varphi'(s_i) D^-\psi^{(d-2)}\left( \sum_{i=1}^{d-1}\varphi(s_i) \right) \ \mathrm{d}\lambda(\mathbf{s}) \\
		&= \int_{\lbrace\mathbf{t}\in(0,1)^{d-1}\setminus L_0^{1:d-1}: \,y\geq f^0(\mathbf{t})\rbrace \cap [\mathbf{0},\mathbf{x}]} D^{-}\psi^{(d-2)}\left(\sum_{i=1}^{d-1}\varphi(s_i)+\varphi(y)\right) \cdot \prod_{i=1}^{d-1}\varphi'(s_i) \ \mathrm{d}\lambda(\mathbf{s}).
	\end{align*}
	Notice that on the one hand, for $\mathbf{s}\not\in L_0^{1:d-1}$ we have $y< f^0(\mathbf{s})$ if, and only if, $\sum_{i = 1}^{d-1}\varphi(s_i) + \varphi(y) > \varphi(0)$, implying $D^-\psi^{(d-2)}\left( \sum_{i = 1}^{d-1}\varphi(s_i) + \varphi(y) \right) = 0$, and, on the other hand, $\mathbf{s}\in L_0^{1:d-1}$ also yields 
	$D^-\psi^{(d-2)}\left( \sum_{i = 1}^{d-1}\varphi(s_i) + \varphi(y) \right) = 0$. 
	Therefore in the strict and the non-strict case we get
	\begin{align*}
		I &= \int_{(\mathbf{0},\mathbf{x}] \setminus L_0^{1:d-1}} D^{-}\psi^{(d-2)}\left(\sum_{i=1}^{d-1}\varphi(s_i)+\varphi(y)\right) \cdot \prod_{i=1}^{d-1}\varphi'(s_i) \ \mathrm{d}\lambda(\mathbf{s}) \\
		&= \int_{(\mathbf{0},\mathbf{x}]\cap(0,1)^{d-1}} D^{-}\psi^{(d-2)}\left(\sum_{i=1}^{d-1}\varphi(s_i)+\varphi(y)\right) \cdot \prod_{i=1}^{d-1}\varphi'(s_i) \ \mathrm{d}\lambda(\mathbf{s}) \\
		&= \int_{(\mathbf{0},\mathbf{x}_{1:d-2}] }\prod_{i=1}^{d-2}\varphi'(s_i) \int_{(0,x_{d-1}]} \varphi'(s_{d-2}) D^-\psi^{(d-2)}\left( \sum_{i=1}^{d-1}\varphi(s_i)+\varphi(y) \right) \mathrm{d}\lambda(s_{d-1})\mathrm{d}\lambda(\mathbf{s}_{1:d-2}).
	\end{align*}
	Hence, using change of coordinates together with the fact that $\psi^{(m)}(\infty) = 0$ holds 
	for $m=1,2,\ldots, d-2$ it follows that 
	\begin{align*}
		I &= \int_{(\mathbf{0},\mathbf{x}_{1:d-2}] }\prod_{i=1}^{d-2}\varphi'(s_i) \lim_{\Delta\to 0} \Biggl\{ \psi^{(d-2)}\left( \sum_{i = 1}^{d-2}\varphi(s_i) + \varphi(x_{d-1}) + \varphi(y) \right) \\
		&\qquad - \psi^{(d-2)}\left( \sum_{i = 1}^{d-2}\varphi(s_i) + \varphi(\Delta) + \varphi(y) \right) \Biggr\} \mathrm{d}\lambda(\mathbf{s}_{1:d-2}) \\
		&= \int_{(\mathbf{0},\mathbf{x}_{1:d-2}] }\prod_{i=1}^{d-2}\varphi'(s_i) \psi^{(d-2)}\left( \sum_{i = 1}^{d-2}\varphi(s_i) + \varphi(x_{d-1}) + \varphi(y) \right) \mathrm{d}\lambda(\mathbf{s}_{1:d-2}).
	\end{align*}
	Proceeding analogously $d-3$ times finally yields 
	\begin{align*}
		I &= \int_{(0,x_1]}\varphi'(s_1) \psi'\left( \varphi(s_1) + \varphi(x_2)+\cdots+\varphi(x_{d-1}) + \varphi(y) \right) \mathrm{d}\lambda(s_1) \\
		&= \psi\left( \sum_{i = 1}^{d-1}\varphi(x_i) + \varphi(y) \right) = C(\mathbf{x},y)
	\end{align*}
	as desired.
\end{proof}

\begin{Rem}
As mentioned in the proof above $\mu_{C^{1:d-1}}(L_0^{1:d-1}) = 0$, implying that in the first line in equation \eqref{eq:mk1} we could 
substitute the univariate distribution function $y \mapsto 1$ by any other univariate distribution function $F$ 
fulfilling $F(1)=1$. 
\end{Rem}

\begin{Rem}
It also seems feasible to consider $m$-kernels for some $m \in \{2,\ldots,d-2\}$ instead of $(d-1)$-kernels, i.e.,
to condition on $m$ instead of $d-1$ coordinates. As shown by the subsequent results, however, although 
$d-1$-kernels involve the highest derivatives of the generator they are easy to handle and provide various new results,
particularly with respect to the interplay with the Williamson transform as discussed in Section \ref{sec5:williamson}.
\end{Rem}

Utilizing Theorem \ref{th:multi:kernel} allows to derive the well-known formulas (\cite{multiArchNeslehova}) 
for the level set masses and the 
Kendall distribution function of multivariate Archimedean copulas easily via Markov kernels and disintegration
(see Propositions \ref{prop:level:surfaces} and \ref{prop:multivariate:kendall} in the Appendix). 
In fact, for $t\in(0,1]$ the identity 
\begin{eqnarray}\label{eq:levelset:mass}
	\mu_C(L_t) = \frac{(-\varphi(t))^{d-1}}{(d-1)!} \cdot \big( D^-\psi^{(d-2)}(\varphi(t)) - D^-\psi^{(d-2)}(\varphi(t-)) \big).
\end{eqnarray}
can be shown. Moreover, if $C$ is strict then $\mu_C(L_0) = 0$ and for non-strict $C$ 
\begin{eqnarray}\label{eq:levelset:mass_zero}
	\mu_C(L_0) &= \frac{(-\varphi(0))^{d-1}}{(d-1)!} \cdot D^-\psi^{(d-2)}(\varphi(0))
\end{eqnarray}
holds. Letting $F_K^d$ denote the Kendall distribution function of $C$, for $t>0$ we have
\begin{eqnarray}\label{eq:kendall.series}
	F_K^d(t) = D^-\psi^{(d-2)}(\varphi(t))  \frac{(-1)^{d-1}}{(d-1)!} \varphi(t)^{d-1} + \sum_{k=0}^{d-2} \psi^{(k)}(\varphi(t)) \frac{(-1)^k}{k!}\varphi(t)^k.
\end{eqnarray}
Moreover for $t=0$ and strict $C$ we have $F_K^d(0) = 0$, and for non-strict $C$
\begin{eqnarray}
	F_K^d(0) = D^-\psi^{(d-2)}(\varphi(0)) \cdot \frac{(-1)^{d-1}}{(d-1)!}\cdot \varphi(0)^{d-1}.
\end{eqnarray}
holds.  

\begin{Rem}
	In the bivariate setting it is well-known that the formula for the level set masses can nicely be 
	interpreted geometrically (see \cite{p21, Nelsen}). 
	In fact, following \cite{p21}, given a discontinuity $b$ of  $D^+\varphi$ we have
	\begin{align*}
		\mu_C(L_b) = \varphi(b) \cdot \left( \frac{1}{D^-\varphi(b)} - \frac{1}{D^+\varphi(b)} \right),
	\end{align*}
	i.e., $\mu_C(L_b)$ corresponds to the length of the line segment on the $x$-axis generated by the left-hand and right-hand tangents of $\varphi$ at $b$ (see Figure 1 in \cite{p21}). 
	Translating from $\varphi$ and $x$-axis to $\psi$ and $y$-axis yields as special case of eq. \eqref{eq:levelset:mass} 
	\begin{align*}
		\mu_C(L_b) = \varphi(b)\cdot\left( D^-\psi(\varphi(b)) - D^-\psi(\varphi(b-)) \right).
	\end{align*}
	
	To establish the multivariate geometric analogue, rather than tangent lines we can 
	consider the left and right hand Taylor polynomials of $\psi$ of order $d-1$ at $a\in[0,\infty)$, i.e., 
	\begin{align*}
		T_{d-1}^\pm\psi(z, a) := D^\pm\psi^{(d-2)}(a) \cdot \frac{(z-a)^{d-1}}{(d-1)!} + \sum_{k=0}^{d-2} \psi^{(k)}(a) \cdot \frac{(z-a)^k}{k!}.
	\end{align*}
	Having that yields 
	$$
	T^{-}_{d-1}\psi(0,\varphi(t)) = D^{-}\psi^{(d-2)}(\varphi(t))\frac{(-1)^{d-1}\varphi(t)^{d-1}}{(d-1)!} + \sum_{k=0}^{d-2}\psi^{(k)}(\varphi(t))\frac{(-1)^k\varphi(t)^k}{k!} = F_K^d(t)
	$$
	and \eqref{eq:levelset:mass} reduces to
	\begin{align*}
		\mu_C(L_t) = T_{d-1}^-\psi(0, \varphi(t)) - T_{d-1}^+\psi(0, \varphi(t)) = F_K^d(t) - F_K^d(t-),
	\end{align*}
	i.e., the $t$-level set mass corresponds to the difference of $y$-intercepts of the left and right hand 
	Taylor polynomials of $\psi$ of order $d-1$ at $\varphi(t)$.
	Panel 2 of Figure \ref{fig:levelmass:taylor} in Example \ref{ex:taylor} illustrates this interpretation for $d=3$.
\end{Rem}

\begin{Ex}\label{ex:taylor}
	We construct an Archimedean generator $\psi$ inducing some $C_\psi\in\mathcal{C}_\text{ar}^3$ with 
	$1=z_0$ being a discontinuity point of $(D^-\psi')$ and provide a geometric interpretation of $\mu_C(L_{z_0})$ 
	in terms of $\psi$. 
	As depicted in the left panel of Figure \ref{fig:levelmass:taylor} we start with some non-negative, convex and 
	decreasing function $(-1)\tilde{\psi}^{(1)}$ (gray) with 
	$D^+\tilde{\psi}^{(1)}(z_0) \neq D^-\tilde{\psi}^{(1)}(z_0)$. 
	Considering 
	\begin{align*}
		\tilde{\psi}(z) := \int_{[z,\infty)} (-1)\tilde{\psi}^{(1)}(s) \ \mathrm{d}\lambda(s)
	\end{align*}
	as well as setting  $\psi(z) = \tilde{\psi}(z) / \tilde{\psi}(0)$ yields a generator of a 
	three-dimensional Archimedean copula $C_\psi$ (gray curve in the right panel).  
	The length of the vertical red line segment formed by the left and right hand Taylor polynomials of order $2$ at 
	$z_0$ (right panel) coincides with $\mu_C(L_{z_0})$.
	\begin{figure}[!htp]
		\centering
		\includegraphics[width=\textwidth]{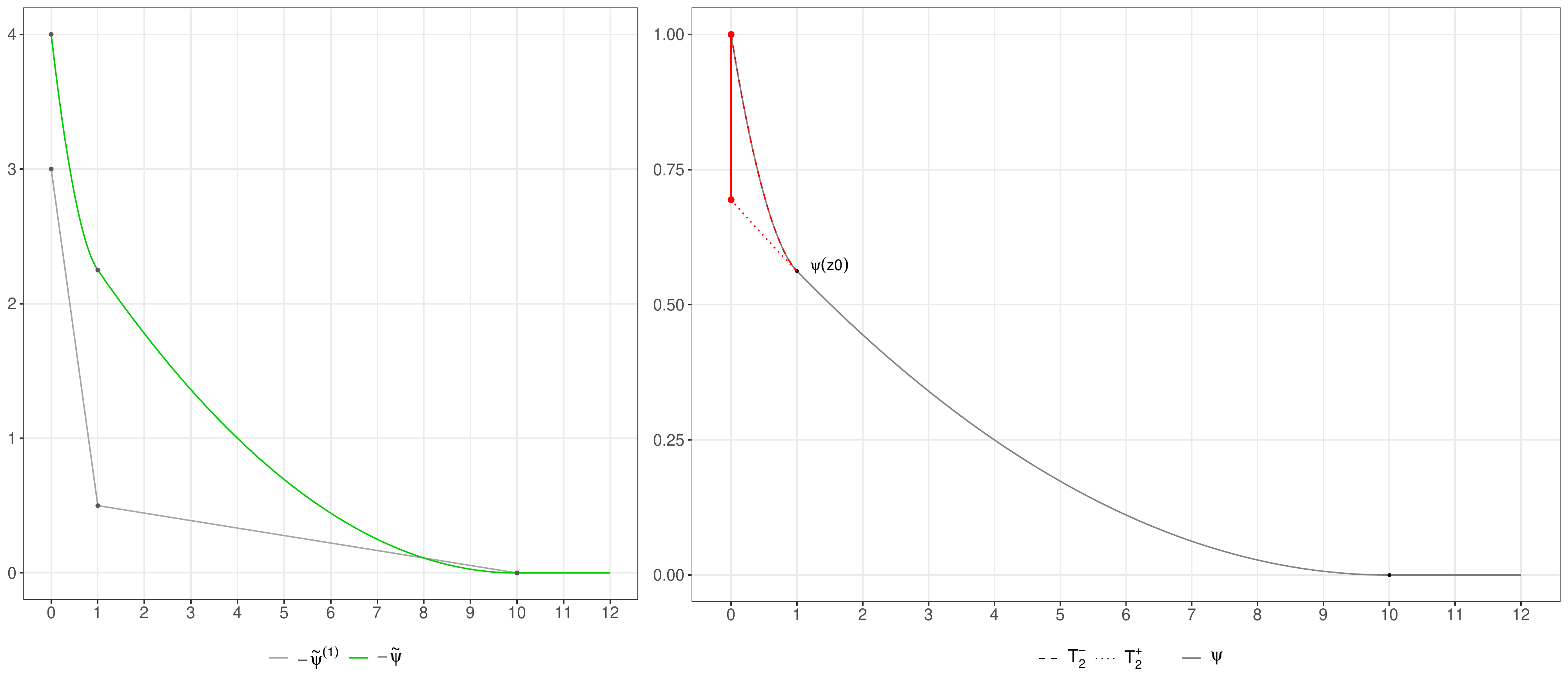}
		\caption{Stepwise construction of a non-strict $3$-monotone Archimedean generator and geometric interpretation 
		of the $z_0$-level set mass in terms of the vertical red line segment formed by the 
		two Taylor parabolas in $z_0$ as considered in Example \ref{ex:taylor}.}
		\label{fig:levelmass:taylor}
	\end{figure} 
\end{Ex}

\section{Characterizing pointwise convergence in $\mathcal{C}^d_{ar}$ and the interrelation with weak conditional 
convergence}
\label{sec:convergence}
In \cite[Section 6]{convArch} it was shown that within the class of bivariate Archimedean copulas pointwise 
convergence and weak conditional convergence (defined as weak conditional convergence of $\lambda$-almost all
conditional distributions) are equivalent. Moreover, it was shown that in the general bivariate setting, 
weak conditional convergence implies pointwise convergence not necessarily vice versa. 
We now tackle the question whether analogous results hold in the general $d\geq 3$-dimensional setting. Considering that - 
contrary to the bivariate case $d=2$ - for two copulas $A,B \in \mathcal{C}^d $ we do not necessarily have
$A^{1:d-1}=B^{1:d-1}$ we first need to discuss potential extensions of the notion of weak conditional convergence 
to $\mathcal{C}^d$.     

The seemingly most natural approach would be to say that a sequence $(A_n)_{n\in\mathbb{N}}$ of $d$-dimensional copulas  converges \emph{weakly conditional} to $A\in\mathcal{C}^d$ if, and only if, there exists a set 
$\Lambda$ with $\mu_{A^{1:d-1}}(\Lambda) = 1$ such that for every $\mathbf{x}\in\Lambda$ the sequence $\left(K_{A_n}(\mathbf{x}, \cdot)\right)_{n\in\mathbb{N}}$ of probability measures on $\mathcal{B}(\mathbb{I})$ converges to the probability measure $K_A(\mathbf{x}, \cdot)$.
As pointed out in \cite{qmd}, however, for $A,B \in \mathcal{C}^d$ we might even have that 
$A^{1:d-1}, B^{1:d-1}$ (or, more precisely, the measures $\mu_{A^{1:d-1}}, \mu_{B^{1:d-1}}$) 
are singular with respect to each other, making it unreasonable to compare $K_A(\mathbf{x},\cdot)$ and 
$K_B(\mathbf{x}, \cdot)$ since they are only defined uniquely $\mu_{A^{1:d-1}}$-a.e. and $\mu_{B^{1:d-1}}$-a.e., respectively (see \cite[Example 4.10]{qmd} for an illustration of this scenario).

As a consequence, the afore-mentioned natural concept of weak conditional convergence does not yield a reasonable notion in full generality. For certain families of copulas such as classes of copulas with identical marginals, or for 
Archimedean copulas, however, considering weak conditional convergence does make sense. 
In fact, given $C_1,C_2\in\mathcal{C}_\text{ar}^d$ we already know that $C_1^{1:d-1}$ and $C_2^{1:d-1}$ are both 
not only absolutely continuous but according to eq. \eqref{eq:arch:density} the corresponding densities $c_1^{1:d-1}, c_2^{1:d-1}$ fulfill $c_i^{1:d-1}(\mathbf{x})>0$ for every $\mathbf{x}$ outside of the respective zero set $L_0^{1:d-1}$, so it can not happen that $C_1^{1:d-1}$ and $C_2^{1:d-1}$ are singular w.r.t. each other.

In what follows we will therefore work with the afore-mentioned notion of weak conditional convergence
in $\mathcal{C}^d_{ar}$ and prove the following main result of this section in several steps: 

\begin{theorem}\label{th:main}
	Suppose that $C,C_1,C_2,\ldots$ are $d$-dimensional Archimedean copulas with generators $\psi, \psi_1, \psi_2,\ldots$, respectively. Then the following assertions are equivalent (\emph{Cont}$(g)$ denotes the set of continuity points of a 
	function $g$):
	\begin{enumerate}
		\item $(C_n)_{n\in\mathbb{N}}$ converges uniformly to $C$.
		\item $(C_n^{i:j})_{n\in\mathbb{N}}$ converges uniformly to $C^{i:j}$ for all $i,j \in \{1,\ldots, d\}$
		with $i \neq j$.
		\item $(\varphi_n)_{n\in\mathbb{N}}$ converges pointwise to $\varphi$ on $(0,1]$.
		\item $(\psi_n)_{n\in\mathbb{N}}$ converges uniformly to $\psi$ on $[0,\infty)$.
		\item $(\psi_n^{(m)})_{n\in\mathbb{N}}$ converges pointwise to $\psi^{(m)}$ on $(0,\infty)$ for 
		every $m\in\{1,2,\ldots, d-2\}$ and 
		$(D^-\psi_n^{(d-2)})_{n\in\mathbb{N}}$ converges pointwise to $D^-\psi^{(d-2)}$ on 
		\emph{Cont}$(D^-\psi^{(d-2)})$.
		\item $(c_n^{1:m})_{n\in\mathbb{N}}$ converges to $c^{1:m}$ $\lambda_d$-almost everywhere in 
		 $\mathbb{I}^{m}$ for every $m\in\{2,3,\ldots, d-1\}$.
	\end{enumerate}
	Furthermore, any of the six assertions implies weak conditional convergence of $(C_n)_{n\in\mathbb{N}}$ to $C$.
\end{theorem}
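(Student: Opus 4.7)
The plan is to establish the equivalences in a cycle $(1)\Rightarrow(2)\Rightarrow(3)\Rightarrow(4)\Rightarrow(5)\Rightarrow(6)\Rightarrow(2)$, thereby linking all six conditions, and then to deduce weak conditional convergence from (5) by plugging the convergences of generator derivatives directly into the Markov kernel formula of Theorem \ref{th:multi:kernel}. The implication $(1)\Rightarrow(2)$ is immediate, since each bivariate marginal $C_n^{ij}$ is obtained by fixing the remaining coordinates at $1$. Each $C_n^{ij}$ is itself bivariate Archimedean with the same normalized generator $\psi_n$, so $(2)\Rightarrow(3)$ reduces to the bivariate characterization from \cite{wcc}, which gives pointwise convergence of $\varphi_n$ to $\varphi$ on $(0,1]$. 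For $(3)\Rightarrow(4)$ I would use that $\psi_n$ coincides (on the relevant range) with the inverse of $\varphi_n$; monotonicity of $\psi_n$, continuity of the strictly decreasing limit $\psi$, and the boundary conditions $\psi(0)=1$, $\psi(\infty)=0$ then yield pointwise and, by the standard P\'olya--Dini argument, uniform convergence of $\psi_n$ to $\psi$ on $[0,\infty)$. To close the loop, $(6)\Rightarrow(2)$ follows by applying Scheff\'e's lemma to the bivariate case $m=2$: a.e.\ convergence of the probability densities $c_n^{12}$ yields total variation convergence of the corresponding measures, hence uniform convergence of $C_n^{12}$ to $C^{12}$.

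The core of the proof consists of steps $(4)\Rightarrow(5)\Rightarrow(6)$. For $(4)\Rightarrow(5)$ I would exploit that, by $d$-monotonicity, each $(-1)^m\psi^{(m)}$ (for $m\in\{0,\ldots,d-2\}$) is non-negative, decreasing, and convex, and is recovered from the next derivative via the reconstruction formula \eqref{eq:up}. Pointwise convergence of a sequence of convex functions to a convex limit propagates to convergence of the one-sided derivatives at every continuity point of the limit derivative (see \cite{KK,pollard}); iterating this $d-2$ times, starting from the uniform convergence $\psi_n\to\psi$, gives pointwise convergence $\psi_n^{(m)}\to\psi^{(m)}$ on $(0,\infty)$ for every $m\in\{1,\ldots,d-2\}$ as well as pointwise convergence of $D^-\psi_n^{(d-2)}$ on $\mathrm{Cont}(D^-\psi^{(d-2)})$. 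For $(5)\Rightarrow(6)$ I would apply the marginal analogue of the density formula \eqref{eq:arch:density} to each $C^{1:m}$, which is again Archimedean with the same generator by \cite[Proposition 4.1]{multiArchNeslehova}: its density is a product of the factors $\varphi'(x_i)$ and $D^-\psi^{(m-1)}(\sum_i\varphi(x_i))$, and each of these factors converges outside a $\lambda_d$-null set by (5) combined with the chain-rule identity $\varphi_n'(x)=1/\psi_n'(\varphi_n(x))$ on the strict monotonicity interval.

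Once (5) is established, the weak conditional convergence statement follows from the explicit form \eqref{eq:mk1} of the Markov kernel. I fix $\mathbf{x}\in(0,1)^{d-1}\setminus L_0^{1:d-1}$ satisfying additionally $\sum_{i=1}^{d-1}\varphi(x_i)\in\mathrm{Cont}(D^-\psi^{(d-2)})$ and $D^-\psi^{(d-2)}(\sum_i\varphi(x_i))\neq 0$; since the set of discontinuities of $D^-\psi^{(d-2)}$ is at most countable and $\mu_{C^{1:d-1}}$ is absolutely continuous, these conditions exclude only a $\mu_{C^{1:d-1}}$-null set of $\mathbf{x}$. For such $\mathbf{x}$ the denominator in the third line of \eqref{eq:mk1} does not vanish, and (5) together with the pointwise convergence of $\varphi_n$ yields $K_{C_n}(\mathbf{x},[0,y])\to K_C(\mathbf{x},[0,y])$ for every $y\in(0,1]$ with $\sum_i\varphi(x_i)+\varphi(y)\in\mathrm{Cont}(D^-\psi^{(d-2)})$, a cocountable subset of $(0,1]$. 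Since the limit $y\mapsto K_C(\mathbf{x},[0,y])$ is a distribution function and convergence holds on a dense set of its continuity points, the Portmanteau theorem gives weak convergence $K_{C_n}(\mathbf{x},\cdot)\to K_C(\mathbf{x},\cdot)$. The principal technical obstacle I anticipate is the careful propagation of convex-function convergence through the $d-2$ successive differentiation steps in $(4)\Rightarrow(5)$, together with the null-set bookkeeping needed to identify the set of $\mathbf{x}$ on which weak conditional convergence can be guaranteed.
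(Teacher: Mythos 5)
Your architecture coincides with the paper's almost step for step: (1)--(4) via the bivariate results of \cite{wcc} and the generator/pseudo-inverse duality, $(4)\Rightarrow(5)$ by propagating convexity through the successive derivatives, $(5)\Rightarrow(6)$ via the density formula, $(6)\Rightarrow(2)$ by Scheff\'e, and weak conditional convergence by substituting into the kernel \eqref{eq:mk1}. Your way of closing the loop for (5) --- going $(5)\Rightarrow(6)\Rightarrow(2)$ and back around --- is a legitimate alternative to the paper's direct proof that derivative convergence forces uniform convergence of the generators (which interprets $D^-\psi_n'$ as probability densities on a triangle and applies Scheff\'e there); both routes work. Two repairs are needed, however.

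First, your cycle $(1)\Rightarrow(2)\Rightarrow(3)\Rightarrow(4)\Rightarrow(5)\Rightarrow(6)\Rightarrow(2)$ never returns to $(1)$: it shows that (2)--(6) are mutually equivalent and all implied by (1), but not that any of them implies (1). The missing line is that (3) and (4) together give $C_n(\mathbf{x})=\psi_n\bigl(\sum_{i=1}^d\varphi_n(x_i)\bigr)\to\psi\bigl(\sum_{i=1}^d\varphi(x_i)\bigr)=C(\mathbf{x})$ pointwise, and pointwise and uniform convergence coincide in $\mathcal{C}^d$. Second, and more substantively: in both $(5)\Rightarrow(6)$ and the weak-conditional-convergence argument, $D^-\psi_n^{(d-2)}$ is evaluated at the $n$-dependent points $\sum_i\varphi_n(x_i)$ and $\sum_i\varphi_n(x_i)+\varphi_n(y)$, not at the fixed limit point. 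Pointwise convergence of $D^-\psi_n^{(d-2)}$ on $\mathrm{Cont}(D^-\psi^{(d-2)})$ --- which is all that assertion (5) provides --- does not by itself yield $D^-\psi_n^{(d-2)}(z_n)\to D^-\psi^{(d-2)}(z)$ for $z_n\to z$. What is needed is \emph{continuous} convergence, which does hold here because the functions $(-1)^{d-2}\psi_n^{(d-2)}$ are convex and converge pointwise; this is precisely the content of Lemma \ref{convex.cont.conv} in the appendix, invoked in Lemma \ref{lem:conv:derivatives}. The same caveat applies to the factors $\varphi_n'(x_i)=1/\psi_n'(\varphi_n(x_i))$, and the same continuous-convergence device is what handles the non-strict case $y<f^0(\mathbf{x})$, where the numerator of the kernel must be shown to tend to $0$ along a moving argument whose limit exceeds $\varphi(0)$. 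Once these two points are made explicit your proof is complete and matches the paper's.
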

Obviously, assuming uniform convergence of $(C_n)_{n\in\mathbb{N}}$ to $C$ yields convergence of all marginal copulas and, in particular, of the bivariate marginals. Thus, the equivalence of (1), (2) and (3) follows directly from the 
results in the two-dimensional setting established in \cite{wcc}. 

\begin{Rem}
	Theorem \ref{th:main} constitutes the natural extension of \cite[Theorem 4.2]{wcc} characterizing
	 uniform convergence within the space of bivariate Archimedean copulas. In the multivariate setting, however, 
	 the afore-mentioned notion of weak conditional convergence in $\mathcal{C}^d_{ar}$ 
	 is a consequence of rather than an equivalence to uniform convergence. Slightly modifying 
	 the notion and incorporating the marginal densities, however, an equivalence can be established, see 
	 point (5) in Theorem \ref{rem:wcc.alternative} at the end of this section.
\end{Rem}

In what follows, we show the equivalence of the assertions (4), (5) and (6) and then conclude the section by deriving weak conditional convergence in several steps. We start with the following lemma clarifying the relationship between
 convergence of the generators and their pseudo-inverses. The proof of an analogous result in the context of 
 t-norms with multiplicative generators can be found in \cite[Theorem 8.14]{tnorms}.

\begin{Lemma}\label{lem:conv:psi}
	Suppose that $\psi, \psi_1, \psi_2,\ldots$ are Archimedean generators with pseudo-inverses 
	$\varphi, \varphi_1, \varphi_2,\ldots$, respectively. Then $(\varphi_n)_{n\in\mathbb{N}}$ converges pointwise to 
	$\varphi$ on $(0,1]$ if, and only if, $(\psi_n)_{n\in\mathbb{N}}$ converges to $\psi$ uniformly on $[0,\infty)$.
\end{Lemma}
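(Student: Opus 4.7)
The plan is to prove the two implications separately, relying throughout on the basic structural facts that each generator is continuous, non-increasing, strictly decreasing on $[0,\varphi(0))$ with $\psi(\varphi(y))=y$ for $y\in(0,1]$, and normalized via $\psi(1)=\tfrac{1}{2}$.

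\textbf{Uniform convergence of $\psi_n$ implies pointwise convergence of $\varphi_n$ on $(0,1]$.} Fix $y \in (0,1]$. The case $y=1$ is trivial since $\varphi_n(1)=0=\varphi(1)$ for all $n$. For $y \in (0,1)$, the point $z_0 := \varphi(y)$ lies in $[0,\varphi(0))$, an interval on which $\psi$ is strictly decreasing and continuous. Hence for every sufficiently small $\epsilon>0$ we have $\psi(z_0-\epsilon) > y > \psi(z_0+\epsilon)$, and we may set
\[
\eta := \tfrac{1}{2}\min\bigl\{\psi(z_0-\epsilon)-y,\ y-\psi(z_0+\epsilon)\bigr\} > 0 .
\]
Uniform convergence of $(\psi_n)$ on $[0,\infty)$ yields some $n_0$ with $\|\psi_n-\psi\|_\infty < \eta$ for $n \geq n_0$, and this gives $\psi_n(z_0-\epsilon) > y > \psi_n(z_0+\epsilon)$. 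Since $\psi_n$ is continuous and strictly decreasing on $[0,\varphi_n(0))$, this forces $\varphi_n(y) \in (z_0-\epsilon, z_0+\epsilon)$, and $\epsilon>0$ was arbitrary.

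\textbf{Pointwise convergence of $\varphi_n$ on $(0,1]$ implies uniform convergence of $\psi_n$ on $[0,\infty)$.} I would first establish pointwise convergence $\psi_n(z) \to \psi(z)$ for every $z \geq 0$. For $z$ with $\psi(z) = y_0 > 0$ (in particular any $z < \varphi(0)$), pick small $\epsilon>0$ with $y_0 \pm \epsilon \in (0,1]$ and set $y^{\pm} := y_0 \pm \epsilon$; strict monotonicity of $\psi$ at $z$ yields $\varphi(y^+) < z < \varphi(y^-)$, and pointwise convergence of $(\varphi_n)$ gives $\varphi_n(y^+) < z < \varphi_n(y^-)$ eventually. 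Monotonicity of $\psi_n$ together with the identity $\psi_n(\varphi_n(y^\pm)) = y^\pm$ then squeezes $\psi_n(z) \in [y^-, y^+]$. For $z$ with $\psi(z)=0$ (only possible in the non-strict case with $z \geq \varphi(0)$), for any $\delta>0$ the value $\varphi(\delta) < \varphi(0) \leq z$, so eventually $\varphi_n(\delta) < z$, whence $\psi_n(z) \leq \delta$; and $z=0$ is trivial.

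\textbf{Upgrading to uniform convergence.} Given $\epsilon>0$, choose $M>0$ with $\psi(M) < \epsilon/3$, possible since $\psi(\infty)=0$. The pointwise-to-uniform upgrade on the compact interval $[0,M]$ is a classical Pólya/Dini-type result: a sequence of monotone (here non-increasing) functions converging pointwise to a continuous monotone function on a compact interval converges uniformly. On the tail $[M,\infty)$, monotonicity of each $\psi_n$ and the fact $\psi_n(M) \to \psi(M) < \epsilon/3$ yield $\psi_n(z) \leq \psi_n(M) < 2\epsilon/3$ for $z \geq M$ and $n$ large, while $\psi(z) \leq \psi(M) < \epsilon/3$, giving $|\psi_n(z)-\psi(z)| < \epsilon$ on $[M,\infty)$.

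\textbf{Main obstacle.} The delicate part is navigating the boundary/edge behaviour cleanly: the possibility of mixing strict and non-strict generators in the sequence (so $\varphi_n(0)$ may jump between finite values and $\infty$), the treatment of $z=\varphi(0)$ in the non-strict limit, and making sure the sandwich inequality $\psi_n(\varphi_n(y^\pm))=y^\pm$ is genuinely available, which rests on strict monotonicity of $\psi_n$ on $[0,\varphi_n(0))$ together with continuity. Once these are handled, both implications are short.
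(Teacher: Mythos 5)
Your proof is correct, but it takes a genuinely different route from the paper. The paper's proof is a two-line reduction to classical probability theory: it extends $\psi$ to a continuous distribution function $\tilde{\psi}$ on $\mathbb{R}$ via $\tilde{\psi}(w)=\psi(-w)$ for $w\le 0$, observes that the quantile function of $\tilde{\psi}$ is $-\varphi$, and then invokes two standard facts — that weak convergence of distribution functions is equivalent to weak convergence of their quantile functions (cited as Lemma 21.2 in van der Vaart), and that pointwise convergence of distribution functions to a continuous limit is automatically uniform (P\'olya). Your argument instead re-derives both of these facts by hand in the specific setting: the forward implication is a sandwich argument exploiting strict monotonicity of $\psi$ near $\varphi(y)$, the reverse implication squeezes $\psi_n(z)$ between $y_0\pm\epsilon$ using $\psi_n(\varphi_n(y^{\pm}))=y^{\pm}$ (which indeed holds for all $y\in(0,1]$ by continuity of $\psi_n$, independently of strictness), and the upgrade to uniformity splits $[0,\infty)$ into a compact part, where a Dini/P\'olya-type argument applies, and a tail controlled by monotonicity and $\psi(M)<\epsilon/3$. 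The edge cases you flag as the ``main obstacle'' are in fact all handled by your argument as written, since you never need $\varphi_n(0)$ or any uniformity in the strictness of the $\psi_n$. What the paper's approach buys is brevity and the reuse of well-known theorems; what yours buys is a self-contained, elementary proof that makes the mechanism (monotonicity plus continuity of the limit) explicit.
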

\begin{proof}
	Defining 
	\begin{align*}
		\tilde{\psi}(w) := \begin{cases}
			\psi(-w) & \text{ if } w\leq 0 \\
			1 & \text{ otherwise}
		\end{cases}
	\end{align*}
	yields a univariate distribution function $\tilde{\psi}$ on $\mathbb{R}$ whose pseudo-inverse 
	$(\tilde{\psi})^-: (0,1] 
	\rightarrow \mathbb{R}$ coincides with $-\varphi$. Considering that weak convergence of distribution functions is equivalent to weak convergence of their pseudo-inverses (see \cite[Lemma 21.2]{vaart_1998}),  
	 using continuity of the involved functions it directly follows
	  that pointwise convergence of $\tilde{\psi}_n$ to $\tilde{\psi}$ pointwise on $\mathbb{R}$ is equivalent to pointwise
	  convergence of $-\varphi_n$ to $-\varphi$ on $(0,1]$ for $n\to\infty$. 
	  In other words: $\psi_n\to\psi$ pointwise on $[0,\infty)$ if, and only if, $\varphi_n\to\varphi$ pointwise 
	  on $(0,1]$. Finally, uniform convergence of $\tilde{\psi}_n$ to $\tilde{\psi}$ on $\mathbb{R}$ 
	  is a direct consequence of the fact that pointwise convergence of a sequence of univariate distribution 
	  functions to a continuous distribution function implies uniform convergence. 
	  Having that, obviously uniform convergence of $(\psi_n)_{n\in\mathbb{N}}$ to $\psi$ on $[0,\infty)$ follows.
\end{proof}

The next lemma focuses on convergence of the derivatives of the Archimedean genera\-tors. Thereby we say that 
a sequence $(f_n)_{n\in\mathbb{N}}$ of real functions $(f_n)_{n\in\mathbb{N}}$ 
\emph{converges continuously} to $f$ if for every sequence $(x_n)_{n\in\mathbb{N}}$ converging to $x \in$ Cont$(f)$ 
we have $\lim_{n\to\infty} f_n(x_n) = f(x)$ (cf. \cite{kosmol}).

\begin{Lemma}\label{lem:conv:derivatives}
	Suppose that $C,C_1,C_2,\ldots$ are $d$-dimensional Archimedean copulas with 
	generators $\psi, \psi_1, \psi_2,\ldots$, respectively. 
	If $(C_n)_{n\in\mathbb{N}}$ converges pointwise to $C$ then for every $m\in\{0,1,2,\ldots, d-2\}$ we have 
	\begin{align*}
		\lim_{n\to\infty} \psi^{(m)}_n(z) = \psi^{(m)}(z).
	\end{align*}
	for every $z\in(0,\infty)$ as well as $\lim_{n\to\infty} D^-\psi^{(d-2)}_n(z) = D^-\psi^{(d-2)}(z)$ 
	for every $z\in$ \emph{Cont}$(D^-\psi^{(d-2)})$. Moreover, in both situations the convergence is continuous, i.e., 
	 for every sequence $(z_n)_{n\in \mathbb{N}}$ in $(0,\infty)$ converging to 
	  $z \in \emph{Cont}(D^-\psi^{(d-2)})$ it holds that 
	  $\lim_{n\rightarrow \infty}D^-\psi_n^{(d-2)}(z_n) = D^-\psi^{(d-2)}(z)$.\\
	Vice versa, if $(\psi_n^{(m)})_{n\in\mathbb{N}}$ converges pointwise to $\psi^{(m)}$ on $(0,\infty)$ for 
	some \mbox{$m\in\{1,2,\ldots,d-2\}$} then $(C_n)_{n\in\mathbb{N}}$ converges pointwise to $C$ and the same 
	implication holds if $\lim_{n\to\infty} D^-\psi^{(d-2)}_n(z) = D^-\psi^{(d-2)}(z)$ for every $z\in$ 
	\emph{Cont}$(D^-\psi^{(d-2)})$.
\end{Lemma}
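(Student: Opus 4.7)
The plan is to split into the forward direction (from pointwise convergence of $C_n\to C$) and the converse, using at the core the classical fact that pointwise convergence of convex functions on an open interval is automatically uniform on compact subsets and implies pointwise convergence of one-sided derivatives at continuity points of the limit's derivative (see \cite{kosmol} as well as the bivariate version in \cite{wcc}).

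For the forward direction I assume $C_n\to C$ pointwise and invoke Lemma \ref{lem:conv:psi} to obtain $\psi_n\to\psi$ uniformly on $[0,\infty)$. I then induct on $m\in\{0,1,\ldots,d-2\}$, exploiting that each $(-1)^m\psi_n^{(m)}$ and the limit $(-1)^m\psi^{(m)}$ are non-negative, non-increasing and convex on $(0,\infty)$. For $m<d-2$ the limit $\psi^{(m)}$ is differentiable everywhere on $(0,\infty)$ (because $\psi^{(m+1)}$ exists by $d$-monotonicity), so the convex-convergence result applied to $(-1)^m\psi_n^{(m)}$ immediately yields $\psi_n^{(m+1)}(z)\to\psi^{(m+1)}(z)$ for every $z\in(0,\infty)$. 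At the last step $m=d-2$ only the one-sided derivative $D^-\psi^{(d-2)}$ is guaranteed to exist everywhere, and exactly the same result yields $D^-\psi_n^{(d-2)}(z)\to D^-\psi^{(d-2)}(z)$ at each point of \emph{Cont}$(D^-\psi^{(d-2)})$. Continuous convergence at such a $z$ then follows by a sandwich argument: choose $a<z<b$ with $a,b\in$ \emph{Cont}$(D^-\psi^{(d-2)})$ (which is dense), exploit monotonicity of $D^-\psi_n^{(d-2)}$ (a consequence of convexity of $(-1)^{d-2}\psi_n^{(d-2)}$) to bound $D^-\psi_n^{(d-2)}(z_n)$ between its values at $a$ and $b$ once $z_n\in[a,b]$, and finally let $a,b\to z$ using continuity of $D^-\psi^{(d-2)}$ at $z$.

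For the converse, assume $\psi_n^{(m)}\to\psi^{(m)}$ pointwise on $(0,\infty)$ for some $m\in\{1,\ldots,d-2\}$ (the variant involving $D^-\psi_n^{(d-2)}$ is handled analogously). I argue by contradiction: if $C_n\not\to C$ then Lemma \ref{lem:conv:psi} provides $z_0\in[0,\infty)$, a subsequence $(n_k)$ and $L\neq\psi(z_0)$ with $\psi_{n_k}(z_0)\to L$. Since the $\psi_{n_k}$ are uniformly bounded, non-increasing and convex with $\psi_{n_k}(0)=1$, Helly's selection principle supplies a further subsequence (still denoted $(n_k)$) converging pointwise on $[0,\infty)$ to a convex, non-increasing function $g$ with $g(0)=1$ and $g(z_0)=L$. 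Iterating the convex-convergence result gives $\psi_{n_k}^{(i)}\to g^{(i)}$ on dense subsets of $(0,\infty)$ for $i=0,1,\ldots,m$, so combining with the hypothesis yields $g^{(m)}=\psi^{(m)}$ on a dense set and hence throughout $(0,\infty)$ by continuity of both sides wherever they are defined. Consequently $g-\psi$ coincides on $(0,\infty)$ with a polynomial of degree at most $m-1$; boundedness of $g-\psi$ together with $\psi(z)\to 0$ as $z\to\infty$ forces this polynomial to be constant, and the normalization $g(0)=\psi(0)=1$ forces it to vanish. Hence $g=\psi$, contradicting $g(z_0)=L\neq\psi(z_0)$.

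The main obstacle will be the converse: one must carefully propagate the convex-convergence conclusion through all intermediate derivatives along the Helly subsequence in order to conclude $g^{(m)}=\psi^{(m)}$ everywhere (not merely on a null set), and then combine the two boundary conditions ($g(0)=1$ and $\psi(\infty)=0$) to eliminate the polynomial ambiguity that integrating $m$ times inevitably introduces.
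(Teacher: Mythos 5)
Your forward direction coincides with the paper's: reduce to uniform convergence of $(\psi_n)_{n\in\mathbb{N}}$ via Lemma \ref{lem:conv:psi}, iterate the convex-convergence theorem through the derivative hierarchy, and obtain continuous convergence of $(D^-\psi_n^{(d-2)})_{n\in\mathbb{N}}$ by the monotonicity sandwich --- this is exactly Lemma \ref{convex.cont.conv} in the Appendix. The converse is where you genuinely diverge. The paper writes $\psi_n(z)=\int_{[z,\infty)}\int_{[s,\infty)}D^-\psi_n'(t)\,\mathrm{d}\lambda(t)\,\mathrm{d}\lambda(s)$ (iterated appropriately for general $d$), observes that the normalization $\psi_n(0)=1$ turns the integrands into probability densities on the triangle $\Delta=\{(x,y):y\geq x\}$, and applies Scheff\'e's theorem to convert the assumed almost-everywhere convergence into $L^1$-convergence, which dominates $\sup_z\vert\psi_n(z)-\psi(z)\vert$ directly. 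Your route is instead a soft compactness argument: Helly selection, identification of the subsequential limit $g$ through its $m$-th ``derivative'', and elimination of the resulting polynomial ambiguity. Both work. The Scheff\'e argument is shorter and even quantitative (it bounds $d_\infty(\psi_n,\psi)$ by an $L^1$-distance of the highest derivatives), at the price of spotting the density interpretation; your argument avoids that trick but obliges you to re-establish convergence of all intermediate derivatives along the subsequence, upgrading dense-set convergence to everywhere-convergence via convexity at each level --- precisely the bookkeeping you flag yourself. One small repair: eliminating the constant via ``$g(0)=\psi(0)=1$'' is not immediate, since the polynomial identity is only established on the open interval $(0,\infty)$ and a convex pointwise limit $g$ need not be right-continuous at $0$ a priori; either use the squeeze $0\leq g\leq 1$ together with $\psi(0+)=1$ and $\psi(\infty)=0$ to pin the constant between $0$ and $0$, or, more simply, invoke the standing normalization $\psi_n(1)=\tfrac{1}{2}$, which gives $g(1)=\tfrac{1}{2}=\psi(1)$ at an interior point and kills the constant at once.
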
 
\begin{proof}
	 We already know that pointwise convergence of $(C_n)_{n\in\mathbb{N}}$ to $C$ is equivalent to 
	 uniform convergence of $(\psi_n)_{n\in\mathbb{N}}$ to $\psi$ on $[0,\infty)$. 
	 Convexity of generators implies that the sequence $(\psi_n')_{n\in\mathbb{N}}$ converges pointwise to $\psi'$ on 
	 Cont$(\psi') = (0,\infty)$ (see, e.g., \cite{rockafellar}). Consi\-dering that 
	 $(-1)^{m}\psi^{(m)},(-1)^{m}\psi_1^{(m)},(-1)^{m}\psi_2^{(m)},\ldots$ are convex functions for every 
	 $m \in \{1,\ldots,d-2\}$ too, applying \cite{rockafellar}, continuous convergence follows. The fact that 
	 $(D^-\psi_n^{(d-2)})_{n\in\mathbb{N}}$ converges continuously to $D^-\psi^{(d-2)}$ on 
	 Cont$(D^-\psi^{(d-2)})$ is a consequence of Lemma \ref{convex.cont.conv} in the Appendix. \\	  
	We prove the reverse implication only for the case $d=3$ and assume that 
	$(D^-\psi_n^{(d-2)})_{n\in\mathbb{N}}$ converges to $D^-\psi^{(d-2)}$ on Cont$(D^-\psi^{(d-2)})$ 
	since the analogous 	statement for $m \in \{1,\ldots,d-2\}$ follows in the same manner and the extension to 
	arbitrary $d \geq 3$ is obvious. 
	For $z \in [0,\infty)$ according to eq. (\ref{eq:up}) we have 
	\begin{align*}
	  \psi(z) &= \int_{[z,\infty)} -\psi'(s) \mathrm{d}\lambda(s) = 
	    \int_{[z,\infty)} -\int_{[s,\infty)} -D^-\psi'(t) \mathrm{d}\lambda(t) \mathrm{d}\lambda(s) \\
	          &= \int_{[z,\infty)} \int_{[s,\infty)} \underbrace{D^-\psi'(t)}_{\geq 0} \mathrm{d}\lambda(t) 
	            \mathrm{d}\lambda(s)
	\end{align*}
	and the same holds for every $\psi_n$. Using $\psi(0)=\psi_n(0)=1$ we can interpret the functions
	$\iota_n, \iota: \Delta \rightarrow [0,\infty)$, defined by 
     \begin{align*}
       \iota_n(t,s) := D^-\psi_n'(t), \quad \iota(t,s) := D^-\psi'(t)
\end{align*}     	
    	as probability densities on the measure space $(\Delta,\mathcal{B}(\Delta),\lambda_2)$ with $\Delta$ denoting the 
    	closed set $\Delta:=\{(x,y) \in [0,\infty)^2: y \geq x\}$. 
    By assumption, the sequence $(\iota_n)_{n \in \mathbb{N}}$ converges $\lambda_2$-almost everywhere on $\Delta$ 
    to $\iota$, so applying Scheffe's theorem (or Riesz' theorem, see \cite{Kuso}) yields
    \begin{align*}
     \lim_{n \rightarrow \infty} \int_\Delta \vert \iota_n (t,s) - \iota(t,s) \vert d\lambda_2(t,s) =0.
    \end{align*}        	
    Hence, for an arbitrary $z \in (0, \infty)$ using the triangle inequality it follows that 
    \begin{align*}
    \vert \psi_n(z) - \psi(z) \vert &= \left \vert 
       \int_{[z,\infty)} \int_{[s,\infty]} D^-\psi_n'(t) - D^-\psi'(t) \, \mathrm{d}\lambda(t) \mathrm{d}\lambda(s)
       \right \vert \\
       & \leq \int_{\Delta \cap [z,\infty)^2} \vert \iota_n (t,s) - \iota(t,s) \vert \mathrm{d}\lambda_2(t,s) 
       \leq \int_\Delta \vert \iota_n - \iota \vert \mathrm{d}\lambda_2 \rightarrow 0
\end{align*}     
    for $n \rightarrow \infty$, which completes the proof. 
\end{proof}

Altogether we have already established the equivalence of the first five assertions in Theorem \ref{th:main}
and it remains to show the equivalence of the sixth assertion, which is tackled in the following lemma: 

\begin{Lemma}\label{lem:conv:densities}
	Suppose that $C,C_1,C_2,\ldots$ are $d$-dimensional Archimedean copulas with generators $\psi, \psi_1, \psi_2,\ldots$, respectively. Then pointwise convergence of $(C_n)_{n\in\mathbb{N}}$ to $C$ is equivalent to pointwise convergence of 
	$\left(c_n^{1:m}\right)_{n\in\mathbb{N}}$ to $c^{1:m}$ almost everywhere in $\mathbb{I}^{m}$ for some  
	$m\in\{2,3,\ldots, d-1\}$.
\end{Lemma}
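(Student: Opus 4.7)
The plan is to apply formula \eqref{eq:arch:density} to the $m$-dimensional marginal $C^{1:m}$, which is itself an Archimedean copula with the same generator $\psi$ (since $\psi$ is $m$-monotone because it is $d$-monotone and $m\leq d-1$). This gives
$$
c^{1:m}(\mathbf{x}) = \mathbf{1}_{(0,1)^m}(\mathbf{x}) \prod_{i=1}^m \varphi'(x_i) \cdot h_m\Big(\sum_{i=1}^m \varphi(x_i)\Big),
$$
where $h_m := \psi^{(m)}$ whenever $m \leq d-2$ (which is continuous on $(0,\infty)$ by convexity of the lower-order derivatives) and $h_{d-1} := D^-\psi^{(d-2)}$. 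The lemma thus reduces to showing that pointwise convergence of $C_n$ to $C$ is equivalent to $\lambda_m$-a.e.\ convergence of this explicit pointwise product.

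For the forward direction I would assume $C_n \to C$ pointwise and invoke the equivalences from Theorem \ref{th:main} that are already established: $\varphi_n \to \varphi$ pointwise on $(0,1]$, $\psi_n^{(k)} \to \psi^{(k)}$ continuously on $(0,\infty)$ for every $k \in \{1,\ldots,d-2\}$, and $D^-\psi_n^{(d-2)} \to D^-\psi^{(d-2)}$ continuously on \emph{Cont}$(D^-\psi^{(d-2)})$ via Lemma \ref{lem:conv:derivatives}. Convexity of the pseudo-inverses together with pointwise convergence furthermore gives $\varphi_n' \to \varphi'$ outside an at most countable exceptional set $D \subset [0,1]$. Setting $\Psi(\mathbf{x}) := \sum_{i=1}^m \varphi(x_i)$, I would then argue that the bad set
$$
N := \big(\mathbb{I}^m \setminus (0,1)^m\big) \cup \bigcup_{i=1}^m \{\mathbf{x} \in \mathbb{I}^m : x_i \in D\} \cup \Psi^{-1}\big([0,\infty) \setminus \text{Cont}(h_m)\big)
$$
is $\lambda_m$-null. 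The first two pieces are elementary; for the third I would use that $\varphi$ is strictly decreasing and continuous, so that each level set $\{\Psi = c\}$ is a graph over the first $m-1$ coordinates and hence $\lambda_m$-null by Fubini (this is where $m\geq 2$ enters), and a countable union of such null sets remains null. Outside $N$, each factor $\varphi_n'(x_i)$ converges to $\varphi'(x_i)$, and continuous convergence of $h_{m,n}$ combined with $\Psi_n(\mathbf{x}) \to \Psi(\mathbf{x})$ yields $h_{m,n}(\Psi_n(\mathbf{x})) \to h_m(\Psi(\mathbf{x}))$, so the entire product converges.

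For the reverse direction, I would assume $c_n^{1:m} \to c^{1:m}$ $\lambda_m$-a.e. for some $m \in \{2,\ldots,d-1\}$. Since these are probability densities on $\mathbb{I}^m$, Scheff\'e's theorem yields $L^1$-convergence, and integration over rectangles $[\mathbf{0},\mathbf{x}]$ forces $C_n^{1:m} \to C^{1:m}$ uniformly. Since $m \geq 2$, projecting once more gives uniform convergence of the bivariate marginal $C_n^{12} \to C^{12}$; because $C_n^{12}$ is itself a bivariate Archimedean copula with generator $\psi_n$, the bivariate result of \cite{wcc} provides $\varphi_n \to \varphi$ pointwise on $(0,1]$, and the already established equivalence of (1) and (3) in Theorem \ref{th:main} concludes $C_n \to C$ pointwise.

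The main obstacle I anticipate is the measure-theoretic bookkeeping for $\Psi^{-1}([0,\infty) \setminus \text{Cont}(h_m))$ in the delicate case $m = d-1$, since $D^-\psi^{(d-2)}$ may carry countably many jumps; the argument crucially uses $m \geq 2$ to reduce the level-set computation to a Fubini-type argument via strict monotonicity of $\varphi$. The remaining work is largely assembling the continuous-convergence statements already at our disposal via Lemma \ref{lem:conv:derivatives} and Lemma \ref{convex.cont.conv}.
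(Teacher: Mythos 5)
Your proposal is correct and follows essentially the same route as the paper: the explicit marginal density formula, continuous convergence of $\psi_n^{(k)}$ and $D^-\psi_n^{(d-2)}$ from Lemma \ref{lem:conv:derivatives}, a Fubini argument showing that the preimage under $\mathbf{x}\mapsto\sum_i\varphi(x_i)$ of the (at most countable) discontinuity set is $\lambda_m$-null, and Scheff\'e plus the bivariate result of \cite{wcc} for the converse. The only (harmless) deviation is in the converse, where you apply Scheff\'e in dimension $m$ and then project the distribution functions to the bivariate marginal, rather than first passing to a.e.\ convergence of $c_n^{1:2}$ as the paper does.
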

\begin{proof}
	We only prove the equivalence for $m=d-1$ since considering the fact that 
	Cont$(\psi^{(m)}) = (0,\infty)$ holds for $m=1,2,\ldots,d-2$ 
	all other cases follow analogously. \\
	First observe that the set 
	\begin{align*}
		\Gamma := \left\lbrace\mathbf{x}\in(0,1)^{d-1}: \sum_{i=1}^{d-1} \varphi(x_i) \in \text{Cont}(D^-\psi^{(d-2)}) \right\rbrace \in \mathcal{B}(\mathbb{I}^{d-1})
	\end{align*}
	has full Lebesgue measure in $\mathbb{I}^{d-1}$. In fact, considering $\Gamma^c := (0,1)^{d-1} \setminus \Gamma$,
	applying disintegration and writing $\mathbf{x}_{1:d-2} = (x_1,x_2,\ldots,x_{d-2})$ yields
	\begin{align}\label{eq:temp.sec4}
		\lambda_{d-1}(\Gamma^c) = \int\limits_{(0,1)^{d-2}} \lambda((\Gamma^c)_{\mathbf{x}_{1:d-2}}) \ \mathrm{d}\lambda_{d-2}(\mathbf{x}_{d-2}).
	\end{align}
	For arbitrary $\mathbf{x}_{1:d-2} \in (0,1)^{d-2}$ obviously the $\mathbf{x}_{1:d-2}$-cut 
	$(\Gamma^c)_{\mathbf{x}_{1:d-2}}$ of $\Gamma^c$ fulfills 
   \begin{align*}
   (\Gamma^c)_{\mathbf{x}_{1:d-2}}= \left\lbrace x_{d-1}\in (0,1): \sum_{i=1}^{d-1} \varphi(x_i) \not \in 
   \text{Cont}(D^-\psi^{(d-2)}) \right\rbrace. 
\end{align*}   	
Considering the fact that $D^-\psi^{(d-2)}$ has at most countably infinitely many discontinuities and that
$\varphi$ is strictly decreasing it follows that $(\Gamma^c)_{\mathbf{x}_{1:d-2}}$ is at most countably infinite
and therefore has $\lambda$-measure $0$. Applying eq. (\ref{eq:temp.sec4}) therefore 
directly yields $\lambda_{d-1}(\Gamma^c)=0$, implying $\lambda_{d-1}(\Gamma)=1$.  \\	
In case $\lim_{n \rightarrow \infty} d_\infty(C_n,C)=0$ holds, for $\mathbf{x}\in \Gamma$ applying Lemma
 \ref{lem:conv:derivatives} yields 
	\begin{align*}
		\lim_{n\to\infty} c^{1:d-1}_n(\mathbf{x}) &= \lim_{n\to\infty} \prod_{i=1}^{d-1} \varphi_n'(x_i) \cdot D^-\psi_n^{(d-2)}\left( \sum_{i=1}^{d-1}\varphi_n(x_i) \right) \\ &= \prod_{i=1}^{d-1} \varphi'(x_i) \cdot D^-\psi^{(d-2)}\left( \sum_{i=1}^{d-1}\varphi(x_i) \right) = c^{1:d-1}(\mathbf{x}).
	\end{align*}
	Conversely, almost everywhere convergence of $(c^{1:m}_n)_{n \in \mathbb{N}}$ to $c^{1:m}$ implies 
	almost everywhere convergence of $(c^{1:2}_n)_{n \in \mathbb{N}}$ to $c^{1:2}$. Using Scheff\'e's theorem 
	we get that $(C^{1:2}_n)_{n \in \mathbb{N}}$ converges pointwise to $C^{1:2}$. Since convergence of bivariate 
	Archimedean copulas is equivalent to pointwise convergence of $(\varphi_n)_{n \in \mathbb{N}}$ to $\varphi$ 
	applying Lemma \ref{lem:conv:psi} yields uniform convergence of $(\psi_n)_{n \in \mathbb{N}}$ to $\psi$ 
	and the assertion follows from the already established equivalence of the first four assertions of 
	 Theorem \ref{th:main}. 
\end{proof}

\noindent Having proved the equivalence of the six conditions in Theorem \ref{th:main} we now show that any of these 
properties implies weak 
conditional convergence. Notice that for the case that the (Archimedean) limit copula $C$ is non-strict 
for every $z > \varphi(0)$ we obviously have 
\begin{align*}
		\lim_{n\to\infty} (-1)^{d-2}D^-\psi^{(d-2)}_n(z) = 0.
	\end{align*}
We now focus on `good' $\mathbf{x} \in \mathbb{I}^{d-1}$ and $y \geq f^0(\mathbf{x})$ and show convergence of the 
Markov kernels. Notice that the subsequent lemma already establishes weak conditional convergence for the case
of a strict $d$-dimensional limit copula $C$. 

\begin{Lemma}\label{lem:conv:strict}
	Suppose that $C,C_1,C_2,\ldots$ are $d$-dimensional Archimedean copulas with generators 
	$\psi, \psi_1, \psi_2,\ldots$ and corresponding Markov kernels $K_C, K_{C_1}, K_{C_2},\ldots$, respectively. 
	If $(C_n)_{n\in\mathbb{N}}$ converges uniformly to $C$ then there exists a set 
	$\Lambda\in\mathcal{B}(\mathbb{I}^{d-1})$ with $\mu_{C^{1:d-1}}(\Lambda) = 1$ such that for every  
	$\mathbf{x}\in\Lambda$ the following assertion holds: there is some set 
	$U^\mathbf{x} \subseteq [f^0(\mathbf{x}), 1]$ which is dense in $[f^0(\mathbf{x}), 1]$ and which fulfills that 
	for every $y \in U^\mathbf{x}$ 
	\begin{align*}
		\lim_{n\to\infty} K_{C_n}(\mathbf{x}, [0,y]) = K_C(\mathbf{x}, [0,y])
	\end{align*}
	holds. 
\end{Lemma}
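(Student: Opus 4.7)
The plan is to exploit the explicit ratio formula for $K_C$ from Theorem \ref{th:multi:kernel} together with the convergence results that are already in hand, namely the pointwise convergence of $(\varphi_n)_{n\in\mathbb{N}}$ to $\varphi$ on $(0,1]$ (equivalence of (1) and (3) in Theorem \ref{th:main}) and the continuous convergence of $(D^-\psi_n^{(d-2)})_{n\in\mathbb{N}}$ at continuity points of $D^-\psi^{(d-2)}$ (Lemma \ref{lem:conv:derivatives}). Since the Markov kernel on the relevant region is a quotient whose numerator and denominator are values of $D^-\psi^{(d-2)}$ composed with continuous maps built from $\varphi$, convergence will essentially follow whenever we can land in continuity points and whenever the denominator stays bounded away from zero.

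First I would define the candidate set
\[
 \Lambda := \Bigl\{\mathbf{x}\in (0,1)^{d-1}\setminus L_0^{1:d-1}: \ \textstyle\sum_{i=1}^{d-1}\varphi(x_i)\in \mathrm{Cont}(D^-\psi^{(d-2)})\ \text{and}\ D^-\psi^{(d-2)}\bigl(\sum_{i=1}^{d-1}\varphi(x_i)\bigr)\neq 0\Bigr\}.
\]
To show $\mu_{C^{1:d-1}}(\Lambda)=1$ I would use that $C^{1:d-1}$ is absolutely continuous (\cite{multiArchNeslehova}, Proposition 4.1) with density of the form given in eq. \eqref{eq:arch:density}. The set $M^{-1}(\{1\})\cup L_0^{1:d-1}$ is $\mu_{C^{1:d-1}}$-null as already observed in the proof of Theorem \ref{th:multi:kernel}; the set of $\mathbf{x}$ with $\sum_{i=1}^{d-1}\varphi(x_i)$ not a continuity point is contained in countably many level sets of the continuous strictly decreasing function $\mathbf{x}\mapsto \sum\varphi(x_i)$ and hence is $\lambda_{d-1}$-null by a Fubini/disintegration argument identical to the one used in Lemma \ref{lem:conv:densities}; finally the locus $D^-\psi^{(d-2)}(\sum\varphi(x_i))=0$ is precisely where the density $c^{1:d-1}$ vanishes and therefore carries no $\mu_{C^{1:d-1}}$-mass.

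Next, for $\mathbf{x}\in\Lambda$ set $z_{\mathbf{x}}:=\sum_{i=1}^{d-1}\varphi(x_i)<\varphi(0)$ and define
\[
 U^{\mathbf{x}} := \bigl\{y\in (f^0(\mathbf{x}),1]: z_{\mathbf{x}}+\varphi(y)\in\mathrm{Cont}(D^-\psi^{(d-2)})\bigr\}.
\]
Density of $U^{\mathbf{x}}$ in $[f^0(\mathbf{x}),1]$ follows because $\varphi$ is continuous and strictly decreasing on $(0,1]$, so the image of $[f^0(\mathbf{x}),1]$ under $y\mapsto z_{\mathbf{x}}+\varphi(y)$ is an interval, and we remove from it only the at most countable discontinuity set of $D^-\psi^{(d-2)}$.

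It then remains to verify convergence on $\Lambda\times U^{\mathbf{x}}$. Fix $\mathbf{x}\in\Lambda$ and $y\in U^{\mathbf{x}}$. By equivalence (3) of Theorem \ref{th:main}, $\varphi_n(x_i)\to\varphi(x_i)$ and $\varphi_n(y)\to\varphi(y)$, so the arguments $\sum_{i=1}^{d-1}\varphi_n(x_i)$ and $\sum_{i=1}^{d-1}\varphi_n(x_i)+\varphi_n(y)$ converge to continuity points of $D^-\psi^{(d-2)}$. Continuous convergence from Lemma \ref{lem:conv:derivatives} then yields convergence of numerator and denominator of $K_{C_n}(\mathbf{x},[0,y])$ to the corresponding values for $C$; the choice of $\Lambda$ guarantees that the limiting denominator is nonzero, so the ratio passes to the limit and equals $K_C(\mathbf{x},[0,y])$. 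The main obstacle is the non-strict case: if $C$ is non-strict while the $C_n$ are strict, or if the arguments approach $\varphi(0)$ from above along the sequence, one must invoke the vanishing property $(-1)^{d-2}D^-\psi_n^{(d-2)}(z)\to 0$ for $z>\varphi(0)$ recalled just before the lemma, but because we deliberately chose $y>f^0(\mathbf{x})$ (equivalently $z_{\mathbf{x}}+\varphi(y)<\varphi(0)$), for all sufficiently large $n$ the arguments lie strictly below $\varphi_n(0)$ and the continuous convergence on $\mathrm{Cont}(D^-\psi^{(d-2)})$ suffices.
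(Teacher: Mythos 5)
Your proposal is correct and follows essentially the same route as the paper: take $\Lambda$ to be the full-measure set where $\sum_i\varphi(x_i)$ is a continuity point of $D^-\psi^{(d-2)}$ (minus $L_0^{1:d-1}$), take $U^{\mathbf{x}}$ to be the $y$'s for which $\sum_i\varphi(x_i)+\varphi(y)$ is also a continuity point, and pass to the limit in the kernel ratio via continuous convergence of $D^-\psi_n^{(d-2)}$ (Lemma \ref{lem:conv:derivatives}) together with pointwise convergence of $\varphi_n$. Your extra condition $D^-\psi^{(d-2)}(\sum_i\varphi(x_i))\neq 0$ in the definition of $\Lambda$ is automatically satisfied for $\mathbf{x}\notin L_0^{1:d-1}$ and hence redundant, and the concluding caveat about the non-strict case is unnecessary since the continuous-convergence lemma already applies at every continuity point; neither affects correctness.
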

\begin{proof}
	As shown in Lemma \ref{lem:conv:densities} the set 
	\begin{align*}
		\Gamma = \left\lbrace\mathbf{x}\in (0,1)^{d-1}: \sum_{i=1}^{d-1} \varphi(x_i) \in \text{Cont}(D^-\psi^{(d-2)}) \right\rbrace
	\end{align*}
	satisfies $\lambda_{d-1}(\Gamma) = 1$. Considering $
		\mu_{C^{1:d-1}}(\Gamma) = \int_\Gamma c^{1:d-1} \ \mathrm{d}\lambda_{d-1} = 1$ it follows that 
	  $\Lambda := \Gamma\setminus L_0^{1:d-1}$ fulfills $\mu_{C^{1:d-1}}(\Lambda)$=1.
	For fixed $\mathbf{x}\in\Lambda$ it follows by the same reasoning as in \ref{lem:conv:densities} that
	\begin{align*}
		U^\mathbf{x} := \left\lbrace y\in [f^0(\mathbf{x}), 1]: \sum_{i=1}^{d-1}\varphi(x_i) + \varphi(y) \in \text{Cont}(D^-\psi^{(d-2)}) \right\rbrace
	\end{align*}
	is of full $\lambda$-measure in $[f^0(\mathbf{x}), 1]$. 
	Therefore applying Lemma \ref{lem:conv:derivatives} directly yields 
	\begin{align*}
		\lim_{n\to\infty}K_{C_n}(\mathbf{x}, [0,y]) &= \lim_{n\to\infty} \frac{D^-\psi^{(d-2)}_n\left(\sum_{i=1}^{d-1} \varphi_n(x_i) + \varphi_n(y)\right)}{D^-\psi^{(d-2)}_n\left(\sum_{i=1}^{d-1} \varphi_n(x_i)\right)} \\ &= \frac{D^-\psi^{(d-2)}\left(\sum_{i=1}^{d-1} \varphi(x_i) + \varphi(y)\right)}{D^-\psi^{(d-2)}\left(\sum_{i=1}^{d-1} \varphi(x_i)\right)} = K_C(\mathbf{x},[0,y]).
	\end{align*}
\end{proof}

It remains to show that we also have convergence for `good' $\mathbf{x}$ and $y < f^0(\mathbf{x})$ (only relevant in 
the case of non-strict limit $C$.)

\begin{Lemma}\label{lem:conv:nonstrict}
	Suppose that $C,C_1,C_2,\ldots$ are $d$-dimensional Archimedean copulas with generators $\psi, \psi_1, \psi_2,\ldots$ and corresponding Markov kernels $K_C, K_{C_1}, K_{C_2},\ldots$, respectively. If $(C_n)_{n\in\mathbb{N}}$ converges uniformly to $C$ then there exists a set $\Lambda\in\mathcal{B}(\mathbb{I}^{d-1})$ with $\mu_{C^{1:d-1}}(\Lambda) = 1$ such that for all $\mathbf{x}\in\Lambda$ we have 
	\begin{align*}
		\lim_{n\to\infty} K_{C_n}(\mathbf{x}, [0,y]) = 0 = K_C(\mathbf{x}, [0,y])
	\end{align*}
	for every $y\in [0,f^0(\mathbf{x}))$.
\end{Lemma}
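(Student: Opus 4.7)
The plan is to exploit that in the non-strict limit copula $C$ the function $\psi^{(d-2)}$ vanishes identically on $[\varphi(0),\infty)$, and to propagate this vanishing to the approximants via a monotonicity-plus-continuous-convergence squeeze. First I would set
$$
\Lambda := \Big\{\, \mathbf{x} \in (0,1)^{d-1}\setminus L_0^{1:d-1} \,:\, A(\mathbf{x}) \in \mathrm{Cont}(D^-\psi^{(d-2)}),\ c^{1:d-1}(\mathbf{x}) > 0\, \Big\},
$$
where $A(\mathbf{x}) := \sum_{i=1}^{d-1}\varphi(x_i)$, and check $\mu_{C^{1:d-1}}(\Lambda) = 1$: the continuity-point condition has full $\lambda_{d-1}$-measure by the argument of Lemma \ref{lem:conv:densities}; $L_0^{1:d-1}$ is a $\mu_{C^{1:d-1}}$-null set by absolute continuity of $C^{1:d-1}$; and $c^{1:d-1}>0$ holds $\mu_{C^{1:d-1}}$-a.e. since $c^{1:d-1}$ is a density. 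Moreover, the explicit formula $c^{1:d-1}(\mathbf{x}) = \prod_{i=1}^{d-1}\varphi'(x_i)\cdot D^-\psi^{(d-2)}(A(\mathbf{x}))$ shows that positivity of the density forces $D^-\psi^{(d-2)}(A(\mathbf{x})) \neq 0$ on $\Lambda$, while Theorem \ref{th:multi:kernel} gives $K_C(\mathbf{x},[0,y])=0$ on $\Lambda$ for all $y<f^0(\mathbf{x})$.

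The core task is then to show $K_{C_n}(\mathbf{x},[0,y]) \to 0$. Fix $\mathbf{x}\in\Lambda$, $y<f^0(\mathbf{x})$, and set $A_n := \sum_{i=1}^{d-1}\varphi_n(x_i)$, $B_n := \varphi_n(y)$. By Lemma \ref{lem:conv:psi} we have $A_n \to A(\mathbf{x}) < \varphi(0)$ and $A_n+B_n \to A(\mathbf{x})+\varphi(y) > \varphi(0)$; picking any $a^* \in (A(\mathbf{x}),\varphi(0))$, the convergence $\psi_n(a^*) \to \psi(a^*)>0$ forces $\varphi_n(0) > a^* > A_n$ for all large $n$, so the first case of \eqref{eq:mk1} (which would produce kernel value $1$) is ruled out. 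The second case gives value $0$ directly, so only the ratio formula needs attention. I would then pick any continuity point $z^* \in (\varphi(0),\, A(\mathbf{x})+\varphi(y))$ of $D^-\psi^{(d-2)}$, available since $D^-\psi^{(d-2)}$ has at most countably many discontinuities. For $n$ large enough $A_n+B_n > z^*$, and the non-negativity and monotonicity of $(-1)^{d-1}D^-\psi_n^{(d-2)}$ (which follow from convexity and monotonicity of $(-1)^{d-2}\psi_n^{(d-2)}$) yield
$$
\bigl|D^-\psi_n^{(d-2)}(A_n+B_n)\bigr| \;\leq\; \bigl|D^-\psi_n^{(d-2)}(z^*)\bigr|.
$$
By the continuous-convergence assertion in Lemma \ref{lem:conv:derivatives}, the right-hand side tends to $|D^-\psi^{(d-2)}(z^*)|$, which equals $0$ because $z^* > \varphi(0)$ and $\psi$ is non-strict (so $\psi^{(d-2)}$ is identically zero in a neighborhood of $z^*$). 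The denominator $D^-\psi_n^{(d-2)}(A_n)$ converges to $D^-\psi^{(d-2)}(A(\mathbf{x}))\neq 0$ by another application of Lemma \ref{lem:conv:derivatives}, so the ratio tends to $0$.

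The principal obstacle is that the approximants $C_n$ need not themselves be non-strict, so one cannot simply invoke an identity like $\psi_n^{(d-2)}(A_n+B_n)=0$. The fixed continuity-point squeeze is precisely what circumvents this difficulty: it transports the vanishing of $\psi^{(d-2)}$ to the right of $\varphi(0)$ in the limit through the sign-consistent monotonicity shared by the whole family $\{D^-\psi_n^{(d-2)}\}$.
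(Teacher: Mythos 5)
Your proof is correct and follows essentially the same route as the paper: the same set $\Lambda$ (continuity points of $D^-\psi^{(d-2)}$ off $L_0^{1:d-1}$), continuous convergence from Lemma \ref{lem:conv:derivatives} for the denominator, and the observation that the numerator's argument eventually exceeds $\varphi(0)$, where the limit derivative vanishes. The only cosmetic difference is that the paper applies continuous convergence directly at the point $\sum_i\varphi(x_i)+\varphi(y)>\varphi(0)$ (automatically a continuity point since $D^-\psi^{(d-2)}\equiv 0$ beyond $\varphi(0)$), whereas you reach the same conclusion via a monotonicity squeeze through an auxiliary continuity point $z^*$, and you make explicit the exclusion of the first kernel case that the paper leaves implicit.
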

\begin{proof}
	Obviously it suffices to consider non-strict $C$. 
	Let $\Lambda$ be as in the proof of Lemma \ref{lem:conv:strict} and fix $\mathbf{x}\in\Lambda$ and 
	$y \in (0,f^0(\mathbf{x}))$. Then $\sum_{i=1}^{d-1}\varphi(x_i) + \varphi(y) > \varphi(0)$ and we have  
    \begin{align*}
    \lim_{n \rightarrow \infty} \sum_{i=1}^{d-1}\varphi_n(x_i) + \varphi_n(y) = 
    \sum_{i=1}^{d-1}\varphi(x_i) + \varphi(y)> \varphi(0).
\end{align*}    	 
	 Applying Lemma \ref{lem:conv:derivatives} (continuous convergence) yields both  
$$
\lim_{n \rightarrow \infty}D^- \psi^{(d-2)}_n\left(\sum\limits_{i=1}^{d-1}\varphi_n(x_i) + \varphi_n(y)\right) = 
D^- \psi^{(d-2)}\left(\sum_{i=1}^{d-1}\varphi(x_i) + \varphi(y)\right) = 0
$$
and, again using $\mathbf{x}\in\Lambda$,  
$$
\lim_{n\to\infty}D^-\psi_n^{(d-2)}\left(\sum_{i=1}^{d-1}\varphi_n(x_i)\right) = D^-\psi^{(d-2)}\left(\sum_{i=1}^{d-1}\varphi(x_i)\right) \neq 0.
$$
Having this, according to eq. \ref{eq:mk1} the desired identity $\lim_{n\to\infty}K_{C_n}(\mathbf{x},[0,y]) = 0$ follows.
Analogous arguments (or, simply using the fact that for every $d$-dimensional copula $C$ 
we have $K_C(\mathbf{x},\{0\})=0$ for $\mu_{C^{1:d-1}}$-almost every $\mathbf{x} \in \mathbb{I}^{d-1}$) 
also apply in the case $y=0$, so the proof is complete.
\end{proof}

\begin{Rem}
	The proof of Lemma \ref{lem:conv:nonstrict} is also applicable in dimension $d=2$ and therefore 
	provides an alternative simpler version of the rather involved approach followed in \cite{wcc}. 
\end{Rem}

Theorem \ref{th:main} has the following straightforward consequence (analogous to the bivariate statement considered 
Theorem 4.2 in \cite{wcc}):

\begin{theorem}\label{rem:wcc.alternative}
Suppose that $C,C_1,C_2,\ldots$ are $d$-dimensional Archimedean copulas with generators $\psi, \psi_1, \psi_2,\ldots$, respectively. Then the following assertions are equivalent:
	\begin{enumerate}
		\item $(C_n)_{n\in\mathbb{N}}$ converges uniformly to $C$.
		\item $(\varphi_n)_{n\in\mathbb{N}}$ converges pointwise to $\varphi$ on $(0,1]$.
		\item $(\psi_n)_{n\in\mathbb{N}}$ converges uniformly to $\psi$ on $[0,\infty)$.
		\item $(\psi_n^{(m)})_{n\in\mathbb{N}}$ converges pointwise to $\psi^{(m)}$ on $(0,\infty)$ for 
		every $m\in\{1,2,\ldots, d-2\}$ and 
		$(D^-\psi_n^{(d-2)})_{n\in\mathbb{N}}$ converges pointwise to $D^-\psi^{(d-2)}$ on 
		\emph{Cont}$(D^-\psi^{(d-2)})$.
		\item $(C_n)_{n\in\mathbb{N}}$ converges weakly conditional to $C$ and 
		$(c_n^{1:d-1})_{n\in\mathbb{N}}$ converges to $c^{1:d-1}$ $\lambda_d$-almost everywhere in 
		 $\mathbb{I}^{d-1}$.
	\end{enumerate}
\end{theorem}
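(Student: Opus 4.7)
The equivalences (1)$\Leftrightarrow$(2)$\Leftrightarrow$(3)$\Leftrightarrow$(4) as well as the implication (1)$\Rightarrow$(5) are immediate from Theorem \ref{th:main}: conditions (2), (3), (4) of the present statement coincide verbatim with conditions (3), (4), (5) of Theorem \ref{th:main}, the density-convergence part of (5) corresponds to condition (6) of Theorem \ref{th:main} with $m=d-1$, and the weak conditional convergence is exactly the concluding assertion of Theorem \ref{th:main}. Hence the only remaining task is to prove (5)$\Rightarrow$(1).

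For this implication I would start from the disintegration representation
$$
C_n(\mathbf{x},y) = \int_{[\mathbf{0},\mathbf{x}]} K_{C_n}(\mathbf{s},[0,y])\, c_n^{1:d-1}(\mathbf{s})\, \mathrm{d}\lambda_{d-1}(\mathbf{s}),
$$
valid thanks to absolute continuity of the $(d-1)$-dimensional marginals (Proposition 4.1 of \cite{multiArchNeslehova}), and the analogous representation for $C$. The plan is to establish that, for $\lambda$-almost every $y\in\mathbb{I}$, the integrand converges in $L^1(\lambda_{d-1})$ to $K_C(\cdot,[0,y])\,c^{1:d-1}(\cdot)$. This immediately yields pointwise convergence $C_n(\mathbf{x},y)\to C(\mathbf{x},y)$ for every $\mathbf{x}\in\mathbb{I}^{d-1}$ and $\lambda$-a.e.~$y$, and the Lipschitz property of copulas together with the equivalence of pointwise and uniform convergence in $\mathcal{C}^d$ (recalled in Section \ref{sec:2:preliminaries}) then upgrades this to uniform convergence.

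Two ingredients drive the $L^1$ convergence. First, Scheff\'e's lemma applied to $c_n^{1:d-1}\to c^{1:d-1}$ a.e.~(probability densities integrating to one) delivers $L^1$-convergence and, in particular, uniform integrability of $(c_n^{1:d-1})_{n\in\mathbb{N}}$. Second, a Fubini argument shows that for $\lambda$-a.e.~$y\in\mathbb{I}$ the set of $\mathbf{s}$ at which $y$ is a discontinuity of $y\mapsto K_C(\mathbf{s},[0,y])$ is a $\mu_{C^{1:d-1}}$-null set: each $\mathbf{s}$-section consists of at most countably many points, so the ``bad'' product set has zero measure. For such $y$ the weak conditional convergence hypothesis forces $K_{C_n}(\mathbf{s},[0,y])\to K_C(\mathbf{s},[0,y])$ for $\lambda_{d-1}$-a.e.~$\mathbf{s}$. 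Combining this with the triangle inequality
$$
|K_{C_n}c_n^{1:d-1}-K_Cc^{1:d-1}| \leq |K_{C_n}-K_C|\,c_n^{1:d-1} + K_C\,|c_n^{1:d-1}-c^{1:d-1}|,
$$
the uniform bound $K_{C_n},K_C\leq 1$, Scheff\'e for the second summand, and Vitali's convergence theorem for the first (a.e.~convergence to $0$ integrated against a uniformly integrable sequence) produces the desired $L^1$ convergence.

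The main obstacle is the simultaneous variation of both the integrand and the underlying probability measure, which prevents a direct application of the usual dominated convergence theorem; the combination of Scheff\'e's lemma, the Fubini argument upgrading weak conditional convergence to $\lambda$-a.e.~pointwise convergence, and a Vitali/Pratt type generalised dominated convergence theorem is precisely what is needed to overcome this. The non-strict case, where the kernels vanish on $L_0^{1:d-1}$, requires no extra work because $L_0^{1:d-1}$ is a $\mu_{C^{1:d-1}}$-null set.
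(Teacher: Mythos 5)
Your proof is correct, but for the only nontrivial direction, (5)$\Rightarrow$(1), you take a genuinely different and considerably heavier route than the paper. The paper observes that the density-convergence half of (5) is exactly condition (6) of Theorem \ref{th:main} specialised to $m=d-1$, and Lemma \ref{lem:conv:densities} already shows that $\lambda$-a.e.\ convergence of $c_n^{1:m}$ to $c^{1:m}$ for a \emph{single} $m$ implies (1): one applies Scheff\'e to pass to the bivariate marginal densities, deduces $C_n^{1:2}\to C^{1:2}$, and invokes the bivariate theory plus Lemma \ref{lem:conv:psi}. In particular the weak-conditional-convergence half of (5) is not needed at all for (5)$\Rightarrow$(1); it is carried in (5) only to make the statement an equivalence in the spirit of the bivariate Theorem 4.2 of \cite{wcc}. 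Your alternative — disintegration of $C_n$, Scheff\'e for the densities, a Fubini argument showing that for $\lambda$-a.e.\ $y$ the kernels converge at $y$ for $\mu_{C^{1:d-1}}$-a.e.\ $\mathbf{s}$, and a Vitali/dominated-convergence step for the product — is sound and has the merit of reconstructing $C_n\to C$ directly from the conditional distributions without descending to lower-dimensional marginals; the price is the extra care needed where $c^{1:d-1}$ vanishes (on $L_0^{1:d-1}$ weak conditional convergence gives no information, and one must argue via $c_n^{1:d-1}\to 0$ there, as your bound $|K_{C_n}-K_C|\,c_n^{1:d-1}\le 2c_n^{1:d-1}$ implicitly does). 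Both arguments are valid; the paper's is essentially a corollary of machinery already in place, while yours is self-contained modulo Theorem \ref{th:main}.
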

\begin{proof}
We already know that the first four conditions are equivalent and that each of them implies the fifth assertion.
On the other hand, if the conditions in (5) hold, then we also have $\lambda_m$-almost everywhere 
convergence of $(c_n^{1:m})_{n\in\mathbb{N}}$ to $c^{1:m}$ for every $m \in \{2,\ldots,d-2\}$ and
applying Theorem \ref{th:main} completes the proof.  
\end{proof}

\section{Archimedean copulas and the Williamson transform}\label{sec5:williamson}
Following \cite{schilling2012}, every $d$-monotone function can be represented via the so-called 
Williamson transform of a unique probability measure on $[0,\infty)$. As a consequence, we may describe and handle  
$d$-dimensional Archimedean copulas $C=C_\psi$ in terms of their corresponding probability measure $\gamma$ on $[0,\infty)$. 
In what follows we first establish some complementary useful results describing the interrelation between generator 
$\psi$ and probability measures $\gamma$, express masses of level sets as well as the Kendall distribution function
handily in terms of $\gamma$, show that regularity properties of the corresponding measure $\gamma$ carry over 
to the Archimedean copula and prove the fact that pointwise convergence of Archimedean copulas $C_1,C_2,\ldots$ to 
an Archimedean copula $C$ is equivalent to weak convergence of the corresponding probability measures 
$\gamma_1,\gamma_2,\ldots$ to the probability measure $\gamma$. 
Based on these facts we then finally show that both, the family of absolutely continuous 
and the family of singular Archimedian copulas is dense in $(\mathcal{C}_{ar}^d, d_\infty)$. \\

We start with recalling the following result (see \cite{multiArchNeslehova} and \cite{schilling2012}) where we 
write $f_+^m$ for the $m$-th power of the positive part $f_+$ of a function $f$, i.e., $f_+^m:=(f_+)^m$: 
\begin{theorem}\label{thm:will_trans}
Let $\psi \colon [0,\infty) \rightarrow \mathbb{I}$ be a function and $d \geq 2$. Then the following two conditions 
are equivalent:
\begin{itemize}
    \item[(1)] $\psi$ is the generator of a $d$-dimensional Archimedean copula $C_\psi$.
    \item[(2)] There exists a unique probability measure $\gamma$ on $\mathcal{B}([0,\infty))$ with 
    $\gamma(\{0\}) = 0$ such that 
     \begin{equation}\label{eq:gamma.to.psi}
     \psi(z) = \int_{[0,\infty)}(1-tz)_+^{d-1} \mathrm{d}\gamma(t) =: (\mathcal{W}_d\gamma)(z),
     \end{equation}
holds for every $z>0$. In other words, $\psi$ is the Williamson transform $\mathcal{W}_d\gamma$ of $\gamma$.
\end{itemize}
\end{theorem}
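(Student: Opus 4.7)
The plan is to prove the two implications separately. For (2)$\Rightarrow$(1), fix a probability measure $\gamma$ on $\mathcal{B}([0,\infty))$ with $\gamma(\{0\})=0$ and set $\psi := \mathcal{W}_d\gamma$. For every $t \geq 0$ the integrand $z \mapsto (1-tz)_+^{d-1}$ is continuous, non-increasing, and on $\{tz<1\}$ has pointwise derivatives $\partial_z^k(1-tz)_+^{d-1} = (-t)^k \frac{(d-1)!}{(d-1-k)!}(1-tz)_+^{d-1-k}$ for $k = 0,1,\ldots,d-2$. Dominated convergence (with $\gamma$-integrable envelopes) legitimizes differentiating under the integral sign and yields
\[
\psi^{(k)}(z) = (-1)^k \frac{(d-1)!}{(d-1-k)!} \int_{[0,\infty)} t^k (1-tz)_+^{d-1-k} \, \mathrm{d}\gamma(t).
\]
From this representation, non-negativity of $(-1)^k \psi^{(k)}$, monotonicity in $z$, and convexity of $(-1)^{d-2}\psi^{(d-2)}$ (inherited from convexity of $z \mapsto (1-tz)_+$) are manifest. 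The boundary values $\psi(0) = \gamma([0,\infty)) = 1$ and $\lim_{z \to \infty}\psi(z) = 0$ follow from dominated convergence, the latter making essential use of $\gamma(\{0\})=0$.

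For the harder direction (1)$\Rightarrow$(2), I would construct $\gamma$ explicitly through an inverse formula. Substituting the derivative identity above into the Kendall distribution function formula \eqref{eq:kendall.series} and exploiting the binomial identity $\sum_{k=0}^{d-1}\binom{d-1}{k}(tz)^k(1-tz)^{d-1-k} = 1$ reveals that any $\gamma$ realizing $\psi$ as its Williamson transform must satisfy $F_K^d(\psi(z)) = \gamma([0,1/z])$. Reading this backwards motivates defining
\[
F(x) := \sum_{k=0}^{d-2} \frac{(-1)^k \psi^{(k)}(1/x)}{k!\, x^k} + \frac{(-1)^{d-1} D^-\psi^{(d-2)}(1/x)}{(d-1)!\, x^{d-1}} \qquad (x>0),
\]
extended by $F(0) := 0$, and letting $\gamma$ be the associated Lebesgue--Stieltjes measure on $[0,\infty)$. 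Closing the loop then requires verifying that $F$ is a bona fide probability distribution function and that $\mathcal{W}_d\gamma = \psi$, the latter being a Fubini computation paired with the reconstruction identities \eqref{eq:up}.

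The main obstacle is showing that $F$ is non-decreasing and right-continuous with $\lim_{x \to \infty}F(x)=1$: because $D^-\psi^{(d-2)}$ is merely left-continuous and may possess countably many discontinuities, one must carefully disentangle left- and right-hand derivatives, which is precisely the bookkeeping already prepared in Section \ref{sec:2:preliminaries}. Uniqueness of $\gamma$ is then almost automatic, since the right-hand side of the definition of $F$ depends only on $\psi$; a conceptually cleaner uniqueness argument is available via the probabilistic representation of \cite{multiArchNeslehova}, where $\psi$ encodes the survival function of $R \cdot S_1$ for $R \sim \gamma$ independent of a uniform random vector $\mathbf{S}$ on the unit simplex in $\mathbb{R}^d$, and the distribution of $R S_1$ uniquely determines that of $R$.
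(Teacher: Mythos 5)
First, note that the paper itself offers no proof of Theorem \ref{thm:will_trans}: it is explicitly recalled from \cite{multiArchNeslehova} and \cite{schilling2012}, so there is no in-paper argument to compare against. Your sketch follows the classical Williamson/McNeil--Ne\v{s}lehov\'a route, and the direction (2)$\Rightarrow$(1) is essentially sound: differentiating under the integral gives the stated formulas for $\psi^{(k)}$, $k\le d-2$ (the $k=d-2$ case needs the one-sided-derivative care that Lemma \ref{lem:Left_hand_G} carries out, since $(1-tz)_+$ is not differentiable at $z=1/t$), and convexity, monotonicity and the boundary values follow; combined with the $d$-monotonicity characterization already quoted in Section \ref{sec:2:preliminaries}, this yields (1).

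The direction (1)$\Rightarrow$(2) as written has a genuine gap. Defining $F$ by the explicit alternating-sign formula and then declaring that the ``main obstacle'' is to show $F$ is non-decreasing, right-continuous, with $F(0+)=0$ and $F(\infty)=1$, is naming the theorem rather than proving it: for a $d$-monotone $\psi$ the monotonicity of this Taylor-remainder expression \emph{is} the substance of Williamson's converse, and it does not follow from the ``bookkeeping already prepared in Section \ref{sec:2:preliminaries}'' --- that section only records existence and countable discontinuity sets of $D^\pm\psi^{(d-2)}$ and the vanishing of $\psi^{(m)}$ at infinity, not any Taylor-remainder positivity. Likewise $\lim_{x\to\infty}F(x)=1$ requires showing $z^k\psi^{(k)}(z)\to 0$ as $z\downarrow 0$ for $1\le k\le d-1$, which is not automatic since $\psi^{(k)}(0+)$ may be infinite. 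A clean way to close the gap is to reverse the construction: since $(-1)^{d-2}\psi^{(d-2)}$ is convex, $(-1)^{d-2}D^-\psi^{(d-2)}$ is non-decreasing and left-continuous and hence induces a positive Borel measure $\nu$ on $(0,\infty)$; define $\gamma$ as the image under $s\mapsto 1/s$ of the measure $\frac{1}{(d-1)!}s^{d-1}\,\mathrm{d}\nu(s)$ (consistent with Lemma \ref{lem:Left_hand_G}), so that monotonicity and right-continuity of $z\mapsto\gamma([0,z])$ are automatic; then $\gamma([0,\infty))=1$ and $\mathcal{W}_d\gamma=\psi$ follow from Fubini together with the reconstruction identities \eqref{eq:up}, and $\gamma(\{0\})=0$ from $\psi(z)\to 0$. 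With that construction in hand, your uniqueness argument (the distribution function of any admissible $\gamma$ is forced to equal the expression in Lemma \ref{lem:one_dim_measure}, which depends only on $\psi$) is correct. As it stands, however, the proposal asserts rather than establishes the two computational pillars of the converse --- that $F$ is a distribution function and that $\mathcal{W}_d\gamma=\psi$ --- so it is a programme, not a proof.
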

\noindent  
Obviously our assumed normalization property $\psi(1) = \frac{1}{2}$ translates to
\begin{equation}\label{eq:cond_meas}
\int_{\mathbb{I}}(1-t)^{d-1} \mathrm{d}\gamma(t) = \frac{1}{2}.
\end{equation}
In what follows we therefore only consider measures $\gamma$ fulfilling eq. (\ref{eq:cond_meas}), let 
$\mathcal{P}_\mathcal{W}$ denote the family of all these measures, i.e., 
\begin{equation}
\mathcal{P}_\mathcal{W}=\left\{ \gamma \in \mathcal{P}([0,\infty)): \gamma(\{0\}) = 0
\textrm{ and } \int_{\mathbb{I}}(1-t)^{d-1} \mathrm{d}\gamma(t) = \frac{1}{2} \right\}
\end{equation}
and refer to $\mathcal{P}_{\mathcal{W}_d}$ as the family of all $d$-Williamson measures.

Again following \cite{multiArchNeslehova} next we derive an explicit representation for the cumulative distribution 
function of $\gamma$ in terms of the Archimedean generator $\psi$:
\begin{Lemma}\label{lem:one_dim_measure}
Let $\psi$ be the generator of a $d$-dimensional Archimedean copula and $\gamma \in \mathcal{P}_{\mathcal{W}_d}$
 its corresponding Williamson measure. Then
\begin{equation}\label{eq:williamson_meas}
\gamma([0,z]) = \sum_{k = 0}^{d-2}\frac{(-1)^k\psi^{(k)}(\tfrac{1}{z})}{k!} \frac{1}{z^{k}} + \frac{(-1)^{d-1}D^-\psi^{(d-2)}(\tfrac{1}{z})}{(d-1)!}\frac{1}{z^{d-1}}
\end{equation}
holds for every $z>0$.
\end{Lemma}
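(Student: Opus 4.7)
The plan is to verify the identity by inverting the Williamson representation \eqref{eq:gamma.to.psi}: I will express each of the derivatives appearing on the right-hand side of \eqref{eq:williamson_meas} as an explicit integral against $\gamma$ and then check that, once collected, these integrals collapse to $\gamma([0,z])$. The calculation splits naturally into three parts: first, computing $\psi^{(k)}$ for $k\in\{0,1,\ldots,d-2\}$; second, computing the one-sided derivative $D^-\psi^{(d-2)}$; and third, assembling and simplifying.

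For the first step, I would differentiate $\psi(y)=\int_{[0,1/y]}(1-ty)^{d-1}\,\mathrm{d}\gamma(t)$ under the integral sign. Since $(1-ty)_+^{d-1}$ vanishes continuously at the moving endpoint $t=1/y$, no boundary contribution arises; on a small neighborhood of a fixed $y_0>0$ the integrand and its first $d-2$ partial derivatives in $y$ are uniformly bounded on the compact set $[0,2/y_0]$, so dominated convergence applies and iteration yields
$$\psi^{(k)}(y) \;=\; (-1)^k\,\frac{(d-1)!}{(d-1-k)!}\int_{[0,1/y]} t^k(1-ty)^{d-1-k}\,\mathrm{d}\gamma(t)$$
for $k=0,1,\ldots,d-2$.

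The main obstacle is the second step, since $\gamma$ may carry atoms and $\psi^{(d-2)}$ need not be differentiable. My approach is to write $\psi^{(d-2)}(y)=(-1)^{d-2}(d-1)!\,[A(y)-yB(y)]$ with $A(y):=\int_{[0,1/y]}t^{d-2}\,\mathrm{d}\gamma$ and $B(y):=\int_{[0,1/y]}t^{d-1}\,\mathrm{d}\gamma$, and to expand the left difference quotient at a fixed $y_0$. The nontrivial piece is an integral over the shrinking interval $(1/y_0,1/y]$ of $t^{d-2}(1-ty)\,\mathrm{d}\gamma$, which I control via the elementary bound $0\leq 1-ty\leq t(y_0-y)$ valid on that interval; this makes the remainder of order $\gamma((1/y_0,1/y])$, hence vanishing as $y\uparrow y_0$. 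The outcome is the clean formula $D^-\psi^{(d-2)}(y)=(-1)^{d-1}(d-1)!\int_{[0,1/y]}t^{d-1}\,\mathrm{d}\gamma(t)$.

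Finally, substituting these expressions into the right-hand side of \eqref{eq:williamson_meas} with $y=1/z$ and using $(d-1)!/(k!(d-1-k)!)=\binom{d-1}{k}$, the $k$-th summand simplifies to $\binom{d-1}{k}\int_{[0,z]}(t/z)^k(1-t/z)^{d-1-k}\,\mathrm{d}\gamma(t)$ for $k=0,\ldots,d-2$, while the $D^-\psi^{(d-2)}$ term provides exactly the missing $k=d-1$ contribution. Pulling the sum inside the integral and applying the binomial theorem to $(t/z)+(1-t/z)=1$ collapses everything to $\int_{[0,z]}\mathrm{d}\gamma=\gamma([0,z])$, which is the claimed identity.
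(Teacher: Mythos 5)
Your proof is correct, and it takes a genuinely different route from the paper. The paper's proof is short and indirect: it quotes Williamson's classical inversion formula from \cite{williamson}, which gives the identity at every continuity point of $D^-\psi^{(d-2)}$, and then extends to all $z>0$ by observing that both sides are right-continuous and agree on a dense set. You instead give a fully self-contained computation: differentiation under the integral sign yields $\psi^{(k)}(y)=(-1)^k\tfrac{(d-1)!}{(d-1-k)!}\int_{[0,1/y]}t^k(1-ty)^{d-1-k}\,\mathrm{d}\gamma(t)$ for $k\le d-2$ (legitimate, since for such $k$ the integrand is $C^{k}$ in $y$ and, for $y$ near a fixed $y_0$, supported in the fixed compact set $[0,2/y_0]$), your difference-quotient argument with the bound $0\le 1-ty\le t(y_0-y)$ on $(1/y_0,1/y]$ correctly handles possible atoms of $\gamma$ and produces $D^-\psi^{(d-2)}(y)=(-1)^{d-1}(d-1)!\int_{(0,1/y]}t^{d-1}\,\mathrm{d}\gamma(t)$, and the binomial collapse then gives the identity directly for \emph{every} $z>0$, with no density or continuity argument needed. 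It is worth noting that your intermediate formula for $D^-\psi^{(d-2)}$ is precisely the paper's Lemma \ref{lem:Left_hand_G}, which the paper proves separately (after the present lemma) by essentially the same difference-quotient estimate starting from the formula $(-1)^{d-2}\psi^{(d-2)}(z)=(d-1)!\int t^{d-2}(1-zt)_+\,\mathrm{d}\gamma(t)$ quoted from \cite{schilling2012}; so your argument buys a unified, citation-free derivation of both lemmas at the cost of being somewhat longer than the paper's two-line density argument.
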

\begin{proof}
Defining $F \colon (0,\infty) \rightarrow \mathbb{I}$ by $F(z) := \gamma([0,z])=\gamma((0,z])$ according to \cite{williamson} 
\begin{equation}\label{eq:one_dim_meas_cont}
F(z) = \sum_{k = 0}^{d-1}\frac{(-1)^k\psi^{(k)}(\tfrac{1}{z})}{k!}\frac{1}{z^{k}}
\end{equation}
holds for every continuity point $z$ of $F$, i.e., for every continuity point $z$ of $D^-\psi^{(d-2)}$. 
Set $\text{Cont}(D^-\psi^{(d-2)})$ and define the function 
$G: (0,\infty) \rightarrow \mathbb{I}$ by
\begin{align*}
G(z) :=
\sum_{k = 0}^{d-2}\frac{(-1)^k\psi^{(k)}(\frac{1}{z})}{k!}\frac{1}{z^{k}}+ 
  \frac{(-1)^{(d-1)}D^-\psi^{(d-2)}(\frac{1}{z})}{(d-1)!}\frac{1}{z^{d-1}}.
\end{align*}
Then the distribution function $F$ and the right-continuous function $G$ coincide on $\text{Cont}(D^-\psi^{(d-2)})$ 
and since the latter is dense in $(0,\infty)$ equality $F=G$ follows and the proof is complete. 
\end{proof}

The following lemma expresses $D^{-}\psi^{(d-2)}$ (appearing both in the numerator and the denominator of the Markov kernel
$K_C(\cdot,\cdot)$ in Theorem \ref{th:multi:kernel}) in terms of $\gamma$ and will be useful in the sequel:
\begin{Lemma}\label{lem:Left_hand_G}
Let $\psi$ be the generator of a $d$-dimensional Archimedean copula and $\gamma \in \mathcal{P}_{\mathcal{W}_d}$ 
be the corresponding Williamson measure. Then  
$$
0\geq G(z) := (-1)^{d-2}D^{-}\psi^{(d-2)}(z) = -(d-1)!\int_{(0,\frac{1}{z}]}t^{d-1} \mathrm{d}\gamma(t)
$$
holds for every $z>0$.
\end{Lemma}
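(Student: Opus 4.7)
The plan is to read $G$ off the Williamson representation $\psi(z)=\int_{[0,\infty)}(1-tz)_+^{d-1}\,\mathrm{d}\gamma(t)$ from Theorem \ref{thm:will_trans}. Since $\gamma(\{0\})=0$ and, on any compact $z$-neighborhood $[z_0,z_1]\subset(0,\infty)$, the integrand $(1-tz)_+^{d-1}$ vanishes for $t>1/z_0$, the effective integration range is the bounded set $(0,1/z_0]$, on which every positive power of $t$ is bounded. This localization turns every repeated differentiation under the integral sign into a routine dominated-convergence argument. Explicitly, for $k\in\{0,1,\ldots,d-2\}$ one has
\begin{equation*}
\frac{\partial^k}{\partial z^k}(1-tz)_+^{d-1}=(-t)^k\,\frac{(d-1)!}{(d-1-k)!}(1-tz)_+^{d-1-k},
\end{equation*}
dominated on $z\in[z_0,z_1]$ by the $\gamma$-integrable function $t^k\,\frac{(d-1)!}{(d-1-k)!}\mathbf{1}_{(0,1/z_0]}(t)$; iterating gives in particular
\begin{equation*}
\psi^{(d-2)}(z)=(-1)^{d-2}(d-1)!\int_{(0,\infty)} t^{d-2}(1-tz)_+\,\mathrm{d}\gamma(t).
\end{equation*}

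For the final (left) derivative I would use that $z\mapsto(1-tz)_+$ is $1$-Lipschitz with pointwise left derivative $-t\,\mathbf{1}_{\{t\leq 1/z\}}$ (at the boundary $tz=1$ one approaches from the left with $1-tz'>0$, so the left derivative equals $-t$). The uniform bound $\bigl|\tfrac{(1-tz)_+-(1-t(z-h))_+}{h}\bigr|\leq t$, combined with the compact-support reduction above, justifies interchanging $D^-$ with the integral, yielding
\begin{equation*}
D^-\psi^{(d-2)}(z)=(-1)^{d-2}(d-1)!\int_{(0,\infty)} t^{d-2}(-t)\,\mathbf{1}_{\{t\leq 1/z\}}\,\mathrm{d}\gamma(t)=(-1)^{d-1}(d-1)!\int_{(0,1/z]} t^{d-1}\,\mathrm{d}\gamma(t).
\end{equation*}
Multiplication by $(-1)^{d-2}$ yields the claimed identity, and non-negativity of $t^{d-1}$ on $(0,1/z]$ makes $G(z)\leq 0$ immediate.

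The main technical obstacle is the interchange of $D^-$ with the integral at the (at most countably many) points where $D^-\psi^{(d-2)}\neq D^+\psi^{(d-2)}$, because a global moment bound $\int t^{d-1}\,\mathrm{d}\gamma<\infty$ is \emph{not} available in general — for instance strict generators with $\psi'(0^+)=-\infty$ violate it. The remedy is precisely the localization to $[z_0,z_1]\subset(0,\infty)$, which confines the effective support of the integrand to the compact set $(0,1/z_0]$ where every moment is finite. As a consistency check, the resulting function $z\mapsto-(d-1)!\int_{(0,1/z]}t^{d-1}\,\mathrm{d}\gamma(t)$ is automatically non-decreasing and left-continuous in $z$: as $z'\uparrow z$ the sets $(0,1/z']$ decrease to $(0,1/z]$, and downward continuity of the finite measure $t^{d-1}\,\mathrm{d}\gamma$ on these bounded sets delivers the required left limit, matching the known left-continuity of the left derivative of the convex function $(-1)^{d-2}\psi^{(d-2)}$.
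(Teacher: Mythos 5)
Your proof is correct and follows essentially the same route as the paper: both start from the Williamson representation $(-1)^{d-2}\psi^{(d-2)}(z)=(d-1)!\int_{(0,\infty)} t^{d-2}(1-tz)_+\,\mathrm{d}\gamma(t)$ (which the paper simply cites from the literature rather than re-deriving by repeated differentiation under the integral) and then pass the left-hand difference quotient inside the integral --- the paper via an explicit split of the domain at $\tfrac{1}{z}$ and $\tfrac{1}{z+h}$ with a direct estimate of the boundary term $I_h$, you via dominated convergence against the pointwise left derivative $-t\,\mathbf{1}_{(0,1/z]}(t)$, which amounts to the same computation. One trivial slip: $z\mapsto(1-tz)_+$ is $t$-Lipschitz rather than $1$-Lipschitz, but the bound you actually invoke on the difference quotient, namely $\bigl|\tfrac{(1-tz)_+-(1-t(z-h))_+}{h}\bigr|\leq t$, is the correct one.
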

\begin{proof}
According to \cite{schilling2012} we already know that for every $z>0$ we have
$$
(-1)^{d-2}\psi^{d-2}(z) = (d-1)!\int_{[0,\infty)}t^{d-2}(1-zt)_+ \mathrm{d}\gamma(t) =
(d-1)!\int_{(0,\infty)}t^{d-2}(1-zt)_+ \mathrm{d}\gamma(t).
$$
Since $(-1)^{d-2}\psi^{d-2}$ is convex, the left hand derivative exists everywhere in $(0,\infty)$ and is 
left-continuous. For fixed $z >0$ considering the left-hand difference quotient 
$$
Q_h(z):=(d-1)! \, \int_{(0,\infty)} t^{d-2}\frac{(1-t(z+h))_+ - (1-tz)_+}{h} \mathrm{d}\gamma(t)
$$
for $h \in (-z,0)$ we get 
\begin{align*}
Q_h(z) &=
\frac{(d-1)!}{h}\int_{(\frac{1}{z},\frac{1}{z+h})}t^{d-2}(1-t(z+h)) \mathrm{d}\gamma(t) \\& \qquad +
\frac{(d-1)!}{h} \int_{(0,\frac{1}{z}]} t^{d-2}[(1-t(z+h))-(1-tz)] \mathrm{d}\gamma(t) \\& =
\underbrace{\frac{(d-1)!}{h} \int_{(\frac{1}{z},\frac{1}{z+h})} t^{d-2}(1-t(z+h)) \mathrm{d}\gamma(t)}_{=:I_h}  
\,\, - (d-1)! \, \int_{[0,\frac{1}{z}]}t^{d-1} \mathrm{d}\gamma(t),
\end{align*}
and it suffices to show that $I_h$ converges to $0$. Using the monotonicity 
and non-negativity of the functions $t \mapsto t^{d-2}$ and $t \mapsto 1-t(z+h)$ on $(\frac{1}{z},\frac{1}{z+h})$
it follows that 
\begin{align*}
|I_h| &\leq \frac{1}{|h|} \frac{1}{(z+h)^{d-2}}\left(1-\frac{z+h}{z}\right)
   \gamma\left(\left(\frac{1}{z},\frac{1}{z+h}\right)\right) \\& =
\frac{1}{(z+h)^{d-2} \, z} \, \, \gamma\left(\left(\frac{1}{z},\frac{1}{z+h}\right)\right) 
\end{align*}
from which the assertion follows immediately. 
\end{proof}
As first application of the previous lemma we characterize strictness in terms of the Williamson measure $\gamma$:
\begin{Lemma}\label{lem:measure.strict.cop}
Suppose that $C$ is an Archimedean copula  with Williamson measure $\gamma \in \mathcal{P}_{\mathcal{W}_d}$.
Then $C$ is strict if, and only if
the support of $\gamma$ contains $0$, i.e., if $\gamma([0,r))>0$ for every $r>0$.
\end{Lemma}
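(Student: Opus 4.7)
The plan is to translate the strictness condition $\varphi(0)=\infty$ (equivalently, $\psi(z)>0$ for every $z\in[0,\infty)$) directly into a statement about $\gamma$ by exploiting the Williamson representation from Theorem \ref{thm:will_trans}. Since $\psi(0)=1$ always holds, $\psi$ is strict if and only if $\psi(z)>0$ for every $z>0$, so the task reduces to characterizing the zeros of $\psi$ via $\gamma$.

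First I would write, for arbitrary $z>0$,
\begin{align*}
\psi(z) \;=\; \int_{[0,\infty)}(1-tz)_+^{d-1}\,\mathrm{d}\gamma(t) \;=\; \int_{(0,1/z)}(1-tz)^{d-1}\,\mathrm{d}\gamma(t),
\end{align*}
where the first equality is eq.~\eqref{eq:gamma.to.psi} and the second uses $\gamma(\{0\})=0$ together with the fact that $(1-tz)_+^{d-1}>0$ precisely on the set $\{t:t<1/z\}$ and vanishes otherwise. Since the integrand is strictly positive on the whole domain of integration, $\psi(z)=0$ if and only if $\gamma((0,1/z))=0$, and, appealing once more to $\gamma(\{0\})=0$, this is the same as $\gamma([0,1/z))=0$.

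Consequently, $\psi(z)>0$ for every $z>0$ if and only if $\gamma([0,1/z))>0$ for every $z>0$. Performing the substitution $r=1/z$ (which maps $(0,\infty)$ bijectively onto itself) turns this into the condition $\gamma([0,r))>0$ for every $r>0$, which is precisely the statement that $0$ lies in the support of $\gamma$. This yields both implications simultaneously. There is no real obstacle here; the only minor points to make explicit are that $\psi(0)=1$ removes $z=0$ from consideration and that $\gamma(\{0\})=0$ lets us pass freely between $\gamma((0,1/z))$ and $\gamma([0,1/z))$, so the equivalence is a direct consequence of the Williamson representation.
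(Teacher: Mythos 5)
Your proof is correct, but it takes a genuinely different (and more elementary) route than the paper. You work directly with the zeroth-order Williamson formula $\psi(z)=\int_{[0,\infty)}(1-tz)_+^{d-1}\,\mathrm{d}\gamma(t)$, observe that the integrand is strictly positive exactly on $(0,1/z)$, and conclude that $\psi(z)=0$ if and only if $\gamma([0,1/z))=0$; strictness then falls out by quantifying over $z$. The paper instead presents the lemma as a ``first application'' of its Lemma \ref{lem:Left_hand_G}, i.e.\ of the identity $(-1)^{d-2}D^{-}\psi^{(d-2)}(z)=-(d-1)!\int_{(0,1/z]}t^{d-1}\,\mathrm{d}\gamma(t)$: if $0$ is in the support of $\gamma$ this derivative never vanishes, which forces $\psi>0$ everywhere; conversely, if $\gamma([0,r))=0$ for some $r$, then $D^{-}\psi^{(d-2)}$ vanishes from some $z_0$ onward (by non-negativity, monotonicity and left-continuity), whence $\psi(z_0)=0$ after integrating up via eq.~\eqref{eq:up}. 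Your argument is shorter and self-contained, needing only Theorem \ref{thm:will_trans} and the standard fact that an integral of a strictly positive function vanishes only on a null set; the paper's version is slightly longer but deliberately routes through the derivative formula that also governs the Markov kernel, which is the object it reuses throughout Section \ref{sec5:williamson}. Both proofs are valid; the only points worth making fully explicit in yours are the two you already flag (that $\gamma(\{0\})=0$ lets you pass between $(0,1/z)$ and $[0,1/z)$, and that a positive integrand over a set of positive measure yields a positive integral).
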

\begin{proof}
If the support of $\gamma$ contains $0$ then obviously $\int_{(0,r)}t^{d-1} \mathrm{d}\gamma(t) >0$
for every $r>0$, hence applying Lemma \ref{lem:Left_hand_G} directly yields 
$(-1)^{d-1}D^{-}\psi^{(d-2)}(z) > 0$ for every $z >0$. Having this, strictness of $\psi$ follows immediately. \\
On the other hand, if there exists some $r>0$ with $\gamma([0,r))=0$ then considering $z_0=\frac{1}{2r}$
and again using Lemma \ref{lem:Left_hand_G} we have $(-1)^{d-1}D^{-}\psi^{(d-2)}(z_0)=0$. Since the function
$z \mapsto (-1)^{d-1}D^{-}\psi^{(d-2)}(z)$ is non-negative, non-increasing and left-continuous it follows that   
$(-1)^{d-1}D^{-}\psi^{(d-2)}(z)=0$ holds for every $z \geq z_0$. Hence $\psi(z_0)=0$ and $\psi$ is non-strict.  
\end{proof}

We now return to the formulas for the level set masses and the Kendall distribution function of Archimedean copulas 
as already mentioned in Section \ref{sec:3:mass:distri} and reformulate them elegantly in terms of the 
Williamson measure $\gamma$. 
Although surprising, to the best of the authors' knowledge these formulas seem to be new: 
\begin{theorem}\label{thm:level.sets.williamson}
Let $C$ be a $d$-dimensional Archimedean copula with generator $\psi$ and Williamson measure $\gamma$.
Then (compare with equations (\ref{eq:levelset:mass}) - (\ref{eq:kendall.series})):
\begin{align}\label{eq:L_t.via.gamma}
\mu_C(L_t) = \gamma(\{\tfrac{1}{\varphi(t)}\}),\, t \in (0,1] 
\end{align}
holds for every $t \in (0,1]$ and every $C$. Furthermore, for strict $C$ we have $\mu_C(L_0)=0$, and
 for non-strict $C$     
\begin{align}
\mu_C(L_0) = \gamma(\{\tfrac{1}{\varphi(0)}\}).
\end{align}
holds. Finally, the Kendall distribution function $F_K^d$ of $C$ fulfills 
\begin{align}\label{eq:kendall.gamma}
F_K^d(t) = \gamma([0,\tfrac{1}{\varphi(t)}])
\end{align}
for every $t \in (0,1]$.
\end{theorem}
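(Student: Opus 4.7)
The plan is to reduce all three identities to a single substitution $z = 1/\varphi(t)$ inside the representation of the cumulative distribution function of the Williamson measure obtained in Lemma \ref{lem:one_dim_measure}. Setting $z = 1/\varphi(t)$ in formula \eqref{eq:williamson_meas} yields
$$\gamma\bigl([0, 1/\varphi(t)]\bigr) = \sum_{k=0}^{d-2} \frac{(-1)^k \psi^{(k)}(\varphi(t))}{k!}\,\varphi(t)^k + \frac{(-1)^{d-1} D^-\psi^{(d-2)}(\varphi(t))}{(d-1)!}\,\varphi(t)^{d-1},$$
which matches the previously recalled expression \eqref{eq:kendall.series} for $F_K^d(t)$ term by term. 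This immediately establishes \eqref{eq:kendall.gamma} for every $t\in(0,1]$.

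To obtain \eqref{eq:L_t.via.gamma} for $t \in (0,1]$ I will exploit the elementary identity $\mu_C(L_t) = F_K^d(t) - F_K^d(t-)$ that is implicit in the Taylor-polynomial reformulation of the level-set mass given in the remark following \eqref{eq:kendall.series}. Since $\varphi$ is continuous and strictly decreasing on $(0,1]$, the map $t\mapsto 1/\varphi(t)$ is continuous and strictly increasing; hence $1/\varphi(t')$ approaches $1/\varphi(t)$ strictly from below as $t' \uparrow t$. Combining this with the Kendall identity just established and the general upward-continuity fact $\lim_{z'\uparrow z}\gamma([0,z']) = \gamma([0,z))$ gives $F_K^d(t-) = \gamma([0, 1/\varphi(t)))$, so that
$$\mu_C(L_t) = \gamma\bigl([0, 1/\varphi(t)]\bigr) - \gamma\bigl([0, 1/\varphi(t))\bigr) = \gamma\bigl(\{1/\varphi(t)\}\bigr).$$

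Finally I treat the mass of $L_0$. In the strict case $\varphi(0)=\infty$, hence $1/\varphi(0) = 0$ and $\gamma(\{0\}) = 0$ by the very definition of $\mathcal{P}_{\mathcal{W}_d}$, consistent with the known value $\mu_C(L_0)=0$. In the non-strict case I first show that $\gamma([0, 1/\varphi(0)))=0$: plugging $z=\varphi(0)$ into the Williamson representation \eqref{eq:gamma.to.psi} and using $\psi(\varphi(0))=0$ forces $(1-t\varphi(0))_+^{d-1}=0$ for $\gamma$-almost every $t>0$, which means $t\geq 1/\varphi(0)$ $\gamma$-a.e. Evaluating Lemma \ref{lem:one_dim_measure} at $z=1/\varphi(0)$ while noting that $\psi^{(k)}(\varphi(0))=0$ for each $k\in\{0,\ldots,d-2\}$ (each $(-1)^k\psi^{(k)}$ is continuous, non-negative and identically zero on $(\varphi(0),\infty)$, hence also vanishes at the boundary $\varphi(0)$) collapses the sum to the single term $\frac{(-\varphi(0))^{d-1}}{(d-1)!}D^-\psi^{(d-2)}(\varphi(0))$, which coincides with \eqref{eq:levelset:mass_zero} and hence with $\mu_C(L_0)$. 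The step I expect to require the most care is precisely this boundary analysis in the non-strict case, where the vanishing of the lower-order derivatives of $\psi$ at the boundary point $\varphi(0)$ must be justified via continuity, non-negativity and convexity before Lemma \ref{lem:one_dim_measure} can be read off cleanly; the remainder of the argument amounts to a direct substitution and one elementary monotone limit.
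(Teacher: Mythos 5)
Your proof is correct and follows essentially the same route as the paper: read off \eqref{eq:kendall.gamma} from Lemma \ref{lem:one_dim_measure} at $z=1/\varphi(t)$, obtain \eqref{eq:L_t.via.gamma} from $\mu_C(L_t)=F_K^d(t)-F_K^d(t-)$ together with monotone continuity of $\gamma$, and treat $L_0$ via \eqref{eq:levelset:mass_zero} plus the observation that $\psi(\varphi(0))=0$ forces $\gamma$ to put no mass below $1/\varphi(0)$. The only (immaterial) divergence is in the non-strict $L_0$ case, where the paper invokes Lemma \ref{lem:Left_hand_G} to rewrite $D^-\psi^{(d-2)}(\varphi(0))$ as an integral, while you instead evaluate Lemma \ref{lem:one_dim_measure} at the boundary and justify the vanishing of the lower-order derivatives there; both are valid.
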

\begin{proof}
First of all notice that the expression for the Kendall distribution function follows immediately from 
equation \eqref{eq:williamson_meas}. Furthermore for $t \in (0,1]$ considering $$\mu_C(L_t)=F_K^d(t)-F_K^d(t-)$$
eq. \eqref{eq:L_t.via.gamma} follows immediately from eq. \eqref{eq:kendall.gamma}.
Finally, using eq. \eqref{eq:levelset:mass_zero} and incorporating Lemma \ref{lem:Left_hand_G} yields
\begin{align*}
    \mu_C(L_0) = \frac{(-1)^{d-1}(\varphi(0))^{d-1}}{(d-1)!}D^-\psi^{(d-2)}(\varphi(0)) = \varphi(0)^{d-1}\int_{(0,\frac{1}{\varphi(0)}]}t^{d-1}\mathrm{d}\gamma(t).
\end{align*}
Since eq. (\ref{eq:gamma.to.psi}) implies that for every $z_0>0$ we have that $\psi(z_0)=0$ is equivalent to 
$\gamma((0,\tfrac{1}{z_0}))=0$, the right-hand side of the last equation simplifies to 
$\gamma(\{\tfrac{1}{\varphi(0)}\})$ and the proof is complete. 
\end{proof}

\begin{Rem}
Notice that eq. (\ref{eq:kendall.gamma}) implies $F_K^d(\frac{1}{2}) = \gamma([0,1])$.
More importantly, the (probably most) famous conjecture in the context of copulas, saying that 
for every fixed $d\geq 3$ two Archimedean copulas $C,D \in \mathcal{C}^d_{ar}$ are identical if, and only if
their Kendall distribution functions coincide (see \cite{GNZ} and \cite{GeRi}) 
would follow if it could be shown that the mapping
assigning each Williamson measure $\gamma$ the function $F_\gamma: \mathbb{I} \rightarrow \mathbb{I}$, defined by
$$
F_\gamma(t)=\gamma\left(\left[0,\frac{1}{\varphi_\gamma(t)}\right]\right)
$$
is injective, where $\varphi_\gamma$ denotes the pseudo-inverse of the generator $\psi=\mathcal{W}_d \, \gamma$.
\end{Rem}

\begin{Ex}\label{ex:level:set}
The probability measure $\gamma=\frac{7}{8} \, \delta_{1/4} + \frac{1}{8} \, \delta_{3/4}$ obviously 
fulfills $\gamma \in \mathcal{P}_{\mathcal{W}_3}$. The induced generator $\psi$ is given by  
\begin{align*}
		\psi(z) := \begin{cases}
			 1- \frac{5z}{8} + \frac{z^2}{8} & \text{ if } z < \frac{4}{3} \\
			 \frac{7}{8}\left(1-\frac{z}{4}\right)^2 & \text{ if } z \in \left[\frac{4}{3},4 \right] \\
			0 & \text{ otherwise}
		\end{cases}
	\end{align*} 
and it is straightforward to verify that $\varphi(\frac{7}{18})=\frac{4}{3}$ and $\varphi(0)=4$ holds.
Using Theorem \ref{thm:level.sets.williamson} therefore yields $\mu_C(L_0)=\frac{7}{8}$ as well as 
$\mu_C(L_{7/18})=\frac{1}{8}$. Figure \ref{fig:Fgamma:psi} depicts the distribution function $z \mapsto \gamma([0,z])$ 
of $\gamma$ (left panel), the induced generator $\psi$ (middle) and the sets $L_0$ and $L_{7/18}$	carrying the mass 
(right panel).
\begin{figure}[!htp]
		\centering
		\includegraphics[width=1\textwidth]{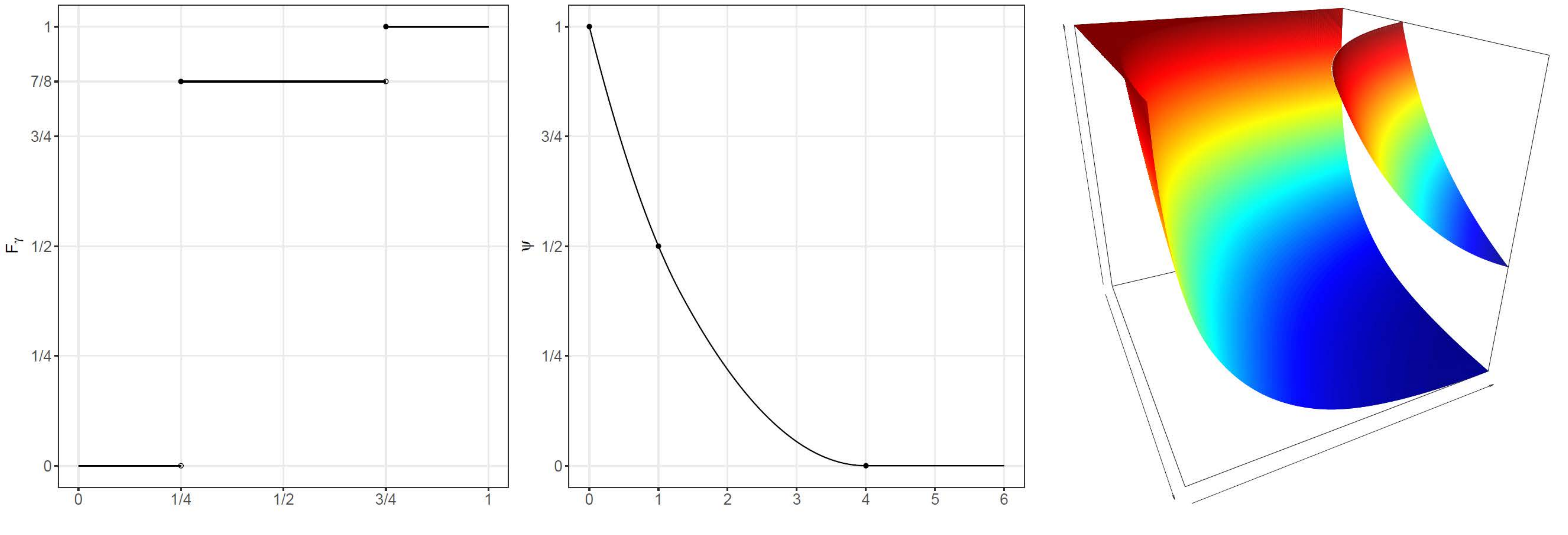}
		\caption{Distribution function of $\gamma$ (left panel), induced generator $\psi$ (middle) and
		the sets $L_0$ and $L_{7/18}$ (right panel) as considered in Example \ref{ex:level:set}.}
		\label{fig:Fgamma:psi}
	\end{figure} 
\end{Ex}

The next result complements Theorem \ref{th:main} and adds a seventh equivalent condition in terms of the corresponding 
Williamson measures. During the process of preparing this manuscript it has been brought to our attention 
that this very result was already established in \cite{Bacigal2017}. Considering that the result is key 
especially for the subsequent regularity results and that the subsequent proof 
is simpler and less technical than the one given in \cite{Bacigal2017} we include it for the sake of completeness.
\begin{theorem}\label{thm:weak.convergence.measures}
Suppose that $C,C_1,C_2,\ldots$ are $d$-dimensional Archimedean copulas with generators 
$\psi, \psi_1, \psi_2,\ldots$ and let $\gamma, \gamma_1, \gamma_2,\ldots$ denote the corresponding Williamson 
measures. Then the following assertions are equivalent:
\begin{itemize}
    \item[(1)] $(C_n)_{n\in\mathbb{N}}$ converges uniformly to $C$.
    \item[(2)] $(\gamma_n)_{n \in \mathbb{N}}$ converges weakly on $[0,\infty)$ to $\gamma$.
\end{itemize}
\end{theorem}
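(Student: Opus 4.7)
The plan is to relate weak convergence $\gamma_n \to \gamma$ directly to pointwise convergence of the generators $\psi_n \to \psi$ (and of their derivatives), and then close the loop via Theorem \ref{th:main}. The Williamson transform (\ref{eq:gamma.to.psi}) handles one direction, the inverse formula of Lemma \ref{lem:one_dim_measure} the other.

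For \textbf{(2)$\,\Rightarrow\,$(1)} I would fix $z \geq 0$ and observe that the integrand $t \mapsto (1-tz)_+^{d-1}$ appearing in (\ref{eq:gamma.to.psi}) is bounded and continuous on $[0,\infty)$. By the portmanteau theorem, weak convergence of $\gamma_n$ to $\gamma$ therefore yields
\[ \psi_n(z) = \int_{[0,\infty)} (1-tz)_+^{d-1} \, \mathrm{d}\gamma_n(t) \,\longrightarrow\, \int_{[0,\infty)} (1-tz)_+^{d-1} \, \mathrm{d}\gamma(t) = \psi(z) \]
for every $z \in [0,\infty)$, i.e.\ pointwise convergence of $\psi_n$ to $\psi$. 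Combined with Lemma \ref{lem:conv:psi} and the equivalence of (1) and (4) in Theorem \ref{th:main}, this gives $d_\infty(C_n,C)\to 0$.

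For the converse \textbf{(1)$\,\Rightarrow\,$(2)} I would start from the explicit representations of the distribution functions $F_n(z) := \gamma_n([0,z])$ and $F(z) := \gamma([0,z])$ supplied by Lemma \ref{lem:one_dim_measure}. Assertions (4) and (5) of Theorem \ref{th:main} jointly provide pointwise convergence $\psi_n^{(k)}(u) \to \psi^{(k)}(u)$ on $(0,\infty)$ for every $k \in \{0,1,\ldots,d-2\}$ as well as $D^-\psi_n^{(d-2)}(u) \to D^-\psi^{(d-2)}(u)$ for every $u \in$ \emph{Cont}$(D^-\psi^{(d-2)})$. Plugging these into the formula gives $F_n(z) \to F(z)$ for every $z>0$ with $1/z \in$ \emph{Cont}$(D^-\psi^{(d-2)})$. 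Since the monotone function $D^-\psi^{(d-2)}$ has at most countably many discontinuities, this set is co-countable in $(0,\infty)$ and in particular contains all continuity points of $F$. As $\gamma_n,\gamma \in \mathcal{P}_{\mathcal{W}_d}$ are probability measures on $[0,\infty)$, no mass can escape to infinity and $F(\infty)=1$, so the classical Helly--Bray (or portmanteau) theorem yields $\gamma_n \to \gamma$ weakly.

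The only bookkeeping step worth flagging is identifying the continuity points of $F$ on $(0,\infty)$: because $\psi^{(0)},\ldots,\psi^{(d-2)}$ are all continuous, the only possible discontinuities of $F$ arise from the $D^-\psi^{(d-2)}(1/z)$-term, so they occur precisely at the $z>0$ for which $1/z$ is a discontinuity of $D^-\psi^{(d-2)}$. Apart from this, both directions are essentially one-line arguments once Theorem \ref{th:main} and Lemma \ref{lem:one_dim_measure} have been invoked; the genuinely hard analytic work has already been packaged into those earlier results.
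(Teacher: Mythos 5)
Your proposal is correct and follows essentially the same route as the paper: the direction (2)$\Rightarrow$(1) via boundedness and continuity of $t\mapsto(1-tz)_+^{d-1}$ in the Williamson transform, and the converse via Lemma \ref{lem:one_dim_measure} together with the convergence of the derivatives $\psi_n^{(k)}$ and $D^-\psi_n^{(d-2)}$ at continuity points, concluding from convergence of the distribution functions on a co-countable (hence dense) set. The bookkeeping of the continuity points of $F$ that you flag is exactly the point the paper also relies on.
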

\begin{proof}
According to Theorem \ref{th:main} the first assertion is equivalent to uniform convergence of 
$(\psi_n)_{n \in \mathbb{N}}$ to $\psi$. (i) If $(\gamma_n)_{n \in \mathbb{N}}$ converges weakly to $\gamma$, 
then applying Theorem \ref{thm:will_trans} and using the fact that the function $t \mapsto (1-tz)_+^{d-1}$ is
continuous and bounded 
$$
\psi_n(z) = \int_{[0,\infty)}(1-tz)_+^{d-1} d\gamma_n(t) \overset{n\rightarrow \infty}{\longrightarrow} \int_{[0,\infty)}(1-tz)_+^{d-1} d\gamma(t) =\psi(z)
$$
follows for every fixed but arbitrary $z \in [0,\infty)$. 
(ii) Vice versa, using Lemma \ref{lem:one_dim_measure} and Lemma \ref{lem:conv:derivatives} and considering 
$z \in (0,\infty)$ with $\frac{1}{z} \in \text{Cont}(D^-\psi^{(d-2)})$ yields
\begin{align*}
\lim_{n \rightarrow \infty} \gamma_n([0,z]) &= \lim_{n \rightarrow \infty} \sum_{k=0}^{d-2}\frac{(-1)^k\psi_n^{(k)}(\tfrac{1}{z})}{k!}z^{-k} + \frac{(-1)^{d-1}D^-\psi_n^{(d-2)}(\tfrac{1}{z})}{(d-1)!}z^{-d+1} \\ 
 &= \sum_{k=0}^{d-2}\frac{(-1)^k\psi^{(k)}(\tfrac{1}{z})}{k!}z^{-k} + \frac{(-1)^{d-1}D^-\psi^{(d-2)}(\tfrac{1}{z})}{(d-1)!}z^{-d+1} \\&= \gamma([0,z]).
\end{align*}
This completes the proof since, firstly, $(0,1) \setminus \text{Cont}(D^-\psi^{(d-2)})$ is at most countably infinite 
and, secondly, convergence of distribution functions on a dense set implies weak convergence. 
\end{proof}

We now focus on studying how regularity/singularity properties of the Williamson measure carries over to 
regularity/singularity properties of the corresponding copula $C_\gamma \in \mathcal{C}^d_{ar}$ and first recall some 
basic notation. 
For every $m \in \{1,\ldots,d\}$ we say that a finite measure $\vartheta$ on $\mathcal{B}(\mathbb{I}^m)$ 
is singular (with respect to $\lambda_m$) if there exists some $G \in \mathcal{B}(\mathbb{I}^m)$ fulfilling 
$\vartheta(G) = \vartheta(\mathbb{I}^m)$ and $\lambda_d(G) = 0$. 
A copula $C \in \mathcal{C}^m$ is called singular if the corresponding $m$-stochastic measure $\mu_{C}$ is singular. 

For the bivariate setting singularity of a copula $C$ is equivalent to singularity of $\lambda$-almost all
conditional distributions $K_C(x,\cdot)$ (see Lemma 1 in \cite{Trutschnig2015}). A a fully analogous statement 
can not hold in general for arbitrary $d \geq 3$ - in fact, for example the copula $C$ of a random vector $(X,X,Y)$ with 
$X,Y$ being independent and uniformly distributed on $[0,1]$ is obviously singular but 
$\mu_{C^{1:2}}=\mu_M$-almost every conditional distribution $K_C(x,x,\cdot)$ coincides with $\lambda$ and therefore is 
absolutely continuous. Assuming, however, absolute continuity of 
$C^{1:d-1}$ as it is the case for every $C \in \mathcal{C}^d_{ar}$, an analogue of the bivariate result remains valid:
\begin{Lemma}\label{lem:ker_cop_sing}
Let $C$ be a $d$-dimensional copula such that $C^{1:d-1}$ is absolutely continuous. 
Then $C$ is singular if, and only if there exists some set $\Lambda \in \mathcal{B}(\mathbb{I}^{d-1})$ 
fulfilling $\mu_{C^{1:d-1}}(\Lambda) = 1$ such that $K_C(\mathbf{x},\cdot)$ is singular for every 
$\mathbf{x} \in \Lambda$. 
\end{Lemma}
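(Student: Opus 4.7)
The plan is to prove the two implications separately. The forward direction is a direct consequence of disintegration combined with Fubini's theorem, while the reverse direction combines uniqueness of disintegration with the Lebesgue decomposition of $\mu_C$ relative to $\lambda_d$.

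For ($\Rightarrow$) I would assume $\mu_C$ is singular and pick a Borel set $G \subseteq \mathbb{I}^d$ with $\mu_C(G) = 1$ and $\lambda_d(G) = 0$. Disintegration along the first $d-1$ coordinates gives
\[
1 \,=\, \mu_C(G) \,=\, \int_{\mathbb{I}^{d-1}} K_C(\mathbf{x}, G_{\mathbf{x}}) \, \mathrm{d}\mu_{C^{1:d-1}}(\mathbf{x}),
\]
so $K_C(\mathbf{x}, G_{\mathbf{x}}) = 1$ holds on a set $\Lambda_1$ of full $\mu_{C^{1:d-1}}$-measure. Fubini's theorem applied to $\lambda_d(G) = 0$ yields $\lambda(G_{\mathbf{x}}) = 0$ for $\lambda_{d-1}$-almost every $\mathbf{x}$, and since by assumption $C^{1:d-1}$ is absolutely continuous this transfers to a set $\Lambda_2$ of full $\mu_{C^{1:d-1}}$-measure. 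Then $\Lambda := \Lambda_1 \cap \Lambda_2$ is the desired set: on it, each $K_C(\mathbf{x},\cdot)$ concentrates on the $\lambda$-null section $G_{\mathbf{x}}$ and is therefore singular.

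For ($\Leftarrow$) I would argue by contraposition. Suppose $\mu_C$ is not singular and write the Lebesgue decomposition $\mu_C = \mu_C^{ac} + \mu_C^s$ relative to $\lambda_d$ with $\mu_C^{ac} \neq 0$. Both marginals $\nu^{ac}(\,\cdot\,) := \mu_C^{ac}(\,\cdot\, \times \mathbb{I})$ and $\nu^s(\,\cdot\,) := \mu_C^s(\,\cdot\, \times \mathbb{I})$ are absolutely continuous with respect to $\lambda_{d-1}$: the first because $\mu_C^{ac} \ll \lambda_d$ together with Fubini, and the second because it is dominated by the absolutely continuous measure $\mu_{C^{1:d-1}}$. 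Denote their densities by $c^{ac}$ and $c^s$, so that $c^{1:d-1} = c^{ac} + c^s$ almost everywhere. Disintegrating the two summands produces probability kernels $K_1, K_2$, and uniqueness of disintegration applied to $\mu_C$ yields
\[
c^{1:d-1}(\mathbf{x}) \, K_C(\mathbf{x}, \cdot) \,=\, c^{ac}(\mathbf{x}) \, K_1(\mathbf{x}, \cdot) \,+\, c^s(\mathbf{x}) \, K_2(\mathbf{x}, \cdot)
\]
for $\mu_{C^{1:d-1}}$-almost every $\mathbf{x}$. Because $\mu_C^{ac} \ll \lambda_d$, a second application of Fubini gives $K_1(\mathbf{x}, \cdot) \ll \lambda$ on a set of full $\nu^{ac}$-measure; intersecting this with $\{c^{ac} > 0\}$ one sees that $K_C(\mathbf{x},\cdot)$ has a nontrivial absolutely continuous component of mass $c^{ac}(\mathbf{x}) / c^{1:d-1}(\mathbf{x}) > 0$. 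Since $\mu_C^{ac} \neq 0$ the set $\{c^{ac} > 0\}$ has positive $\lambda_{d-1}$-measure, and because $c^{1:d-1} \geq c^{ac} > 0$ there, it also carries positive $\mu_{C^{1:d-1}}$-measure, contradicting singularity of $K_C(\mathbf{x},\cdot)$ on the assumed set $\Lambda$.

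The main obstacle will be the uniqueness-of-disintegration identity: one has to align the disintegrations of the sub-probability measures $\mu_C^{ac}$ and $\mu_C^s$, each relative to its own $\lambda_{d-1}$-absolutely continuous marginal, with the disintegration of $\mu_C$ relative to $\mu_{C^{1:d-1}}$, and then argue that the resulting equality holds $\mu_{C^{1:d-1}}$-almost everywhere rather than only $\lambda_{d-1}$-almost everywhere. Since $\mu_{C^{1:d-1}} \ll \lambda_{d-1}$ this transfer is harmless, provided one restricts attention to the set $\{c^{1:d-1} > 0\}$, which has full $\mu_{C^{1:d-1}}$-measure.
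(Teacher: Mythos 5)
Your proposal is correct and follows essentially the same route as the paper: disintegration plus Fubini (and absolute continuity of $C^{1:d-1}$ to pass from $\lambda_{d-1}$-a.e.\ to $\mu_{C^{1:d-1}}$-a.e.) for the forward implication, and contraposition via the Lebesgue decomposition of $\mu_C$ together with uniqueness of disintegration for the converse. The only cosmetic difference is that the paper writes the absolutely continuous summand's conditional measures explicitly through the joint density $k(\mathbf{x},y)$ rather than through an abstract kernel $K_1$ shown to be $\lambda$-absolutely continuous a.e., but the substance is identical.
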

\begin{proof}
If $C \in \mathcal{C}^d$ is singular then by definition there exists some set 
$G \in \mathcal{B}(\mathbb{I}^{d-1})$ fulfilling $\mu_C(G) = 1$ as well as $\lambda_d(G) = 0$. 
Since every set $G \in \mathcal{B}(\mathbb{I}^{d-1})$ with $\lambda_{d-1}(G)=1$ also fulfills 
$\mu_{C^{1:d-1}}(G)=1$, disintegration implies the existence of a set 
$\Lambda \in \mathcal{B}(\mathbb{I}^{d-1})$ with $\lambda_{d-1}(\Lambda) = 1$ such that for every $\mathbf{x} \in \Lambda$ 
we have that the $\mathbf{x}$-cut $G_\mathbf{x}$ of $G$ fulfills $K_C(\mathbf{x},G_\mathbf{x}) = 1$ and $\lambda(G_\mathbf{x}) = 0$.
In other words, $K_C(\mathbf{x},\cdot)$ is singular and the first implication is proved. 

The reverse implication can be proved as follows. We show the contraposition and assume that 
$\mu_C$ is not singular with respect to $\lambda_d$, i.e., the absolutely continuous part $\mu_C^{abs}$ 
of the Lebesgue decomposition $\mu_C=\mu_C^{abs} + \mu_C^{sing}$ of $\mu_C$ with respect to $\lambda_d$ 
is non-degenerated in the sense that
$\mu_C^{abs}(\mathbb{I}^{d})>0$ holds. Let $G \in \mathcal{B}(\mathbb{I}^{d-1})$ with $\lambda_{d-1}(G) = 0$
be arbitrary but fixed. Then obviously 
$$
(\mu_{C}^{sing})^{1:d-1}(G) = \mu_C^{sing}(G\times \mathbb{I}) \leq \mu_C(G\times \mathbb{I}) = \mu_{C}^{1:d-1}(G) = \int_G c^{1:d-1}(\mathbf{x})\mathrm{d}\lambda_{d-1} = 0,
$$
so there exists a Radon-Nikodym derivative $f\colon\mathbb{I}^{d-1} \rightarrow [0,\infty)$ of 
$(\mu_C^{sing})^{1:d-1}$ with respect to $\lambda_{d-1}$. Letting $k\colon\mathbb{I}^{d} \rightarrow [0,\infty)$ 
denote the Radon-Nikodym derivative of $\mu_C^{abs}$ with respect to $\lambda_{d}$, using disintegration,  
for arbitrary $E \in \mathcal{B}(\mathbb{I}^{d-1})$, $F \in \mathcal{B}(\mathbb{I})$ we get  
\begin{align*}
\int_{E}K_C(\mathbf{x},F)c^{1:d-1} & (\mathbf{x})\mathrm{d}\lambda_{d-1}(\mathbf{x}) = 
\int_{E}K_C(\mathbf{x},F)\mathrm{d}\mu_{C^{1:d-1}}(\mathbf{x})\\& =\mu_C(E \times F) = 
\mu_C^{abs}(E\times F) + \mu_C^{sing}(E\times F) \\&=
\int_E \left[\int_F k(\mathbf{x},y)\mathrm{d}\lambda(y)\right]\mathrm{d}\lambda_{d-1}(\mathbf{x}) + \int_E H^{sing}(\mathbf{x},F)\mathrm{d}(\mu_C^{sing})^{1:d-1}(\mathbf{x}) \\&=
\int_E \left[\int_F k(\mathbf{x},y)\mathrm{d}\lambda(y)\right]\mathrm{d}\lambda_{d-1}(\mathbf{x}) + \int_E H^{sing}(\mathbf{x},F)f(\mathbf{x})\mathrm{d}\lambda_{d-1}(\mathbf{x}) \\&=
\int_E \left[\int_F k(\mathbf{x},y)\mathrm{d}\lambda(y) +  H^{sing}(\mathbf{x},F)f(\mathbf{x})\right]\mathrm{d}\lambda_{d-1}(\mathbf{x})
\end{align*}
where $H^{sing}(\mathbf{x},\cdot)$ denotes the conditional measure (sub- or super Markov kernel) of 
$\mu_C^{sing}$ given $\mathbf{x}$. Since $E \in \mathcal{B}(\mathbb{I}^{d-1})$ it follows that 
$$
K_C(\mathbf{x},F)c^{1:d-1}(\mathbf{x}) = \int_F k(\mathbf{x},y)\mathrm{d}\lambda(y) +  H^{sing}(\mathbf{x},F)f(\mathbf{x})
$$
holds for $\lambda_{d-1}$-almost every $\mathbf{x} \in \mathbb{I}^{d-1}$. Using the fact that
 $\mu_C^{1:d-1}$ is absolutely continuous and that obviously 
 $\mu_{C^{1:d-1}}(\{\mathbf{x}\in\mathbb{I}^{d-1}\colon c^{1:d-1}(\mathbf{x}) = 0\}) = 0$ yields the identity
$$
K_C(\mathbf{x},F) = \int_F \frac{k(\mathbf{x},y)}{c^{1:d-1}(\mathbf{x})}\mathrm{d}\lambda(y) +  H^{sing}(\mathbf{x},F)\frac{f(\mathbf{x})}{c^{1:d-1}(\mathbf{x})}
$$
for $\mu_{C^{1:d-1}}$-almost every $\mathbf{x} \in \mathbb{I}^{d-1}$.
Since $\mu_C^{abs}$ is non-degenerated by assumption, there exists a set 
$\Upsilon \in \mathcal{B}(\mathbb{I}^{d-1})$ with $c^{1:d-1}(\mathbf{x})>0$ for every $\mathbf{x} \in \Upsilon$ and
$\mu_{C^{1:d-1}}(\Upsilon)>0$ such that for every $\mathbf{x} \in \Upsilon$ the absolutely continuous measure 
$F \mapsto \int_F \frac{k(\mathbf{x},y)}{c^{1:d-1}(\mathbf{x})}\mathrm{d}\lambda(y)$ is non-degenerated.
This shows that for such $\mathbf{x}$ the measure $K_C(\mathbf{x},\cdot)$ can not be singular and the proof 
is complete.
\end{proof}
Following \cite{Lange1973} every Markov kernel 
$K_C(\cdot,\cdot) \colon \mathbb{I}^{d-1} \times \mathcal{B}(\mathbb{I}) \rightarrow \mathbb{I}$ can be decomposed into the 
sum of three sub- Markov kernels from $\mathbb{I}$ to $\mathcal{B}(\mathbb{I})$ as
\begin{align}\label{eq:Kernel.decomp}
K_C(\mathbf{x},\cdot) = K_C^{dis}(\mathbf{x},\cdot) + K_C^{sing}(\mathbf{x},\cdot) + K_C^{abs}(\mathbf{x},\cdot),
\end{align}
whereby each measure $K_C^{dis}(\mathbf{x},\cdot)$ is discrete, each $K_C^{sing}(\mathbf{x},\cdot)$ is singular 
and has no point masses and $K_C^{abs}(\mathbf{x},\cdot)$ is absolutely continuous on $\mathcal{B}(\mathbb{I})$.  
Again assuming absolute continuity of $C^{1:d-1}$ and letting $c^{1:d-1}$ denote the corresponding density 
in what follows we will refer to the three measures
$\mu_C^{dis}, \mu_C^{sing}, \mu_C^{abs}$, defined by 
\begin{align}\label{eq:decom.disint}
\mu_C^{dis}(G) &= \int_{\mathbb{I}^{d-1}} K_C^{dis}(\mathbf{x},G_{\mathbf{x}}) c^{1:d-1}(\mathbf{x}) \mathrm{d}\lambda_{d-1}(\mathbf{x}) \nonumber \\
\mu_C^{sing}(G) &= \int_{\mathbb{I}^{d-1}} K_C^{sing}(\mathbf{x},G_{\mathbf{x}}) c^{1:d-1}(\mathbf{x}) \mathrm{d}\lambda_{d-1}(\mathbf{x}) \\
\mu_C^{abs}(G) &= \int_{\mathbb{I}^{d-1}} K_C^{abs}(\mathbf{x},G_{\mathbf{x}}) c^{1:d-1}(\mathbf{x}) \mathrm{d}\lambda_{d-1}(\mathbf{x})   \nonumber
\end{align}
for every $G \in \mathcal{B}(\mathbb{I})$ as the discrete, the singular, and the absolutely continuous component of 
$\mu_C$.  

We now show how singularity/regularity of $\gamma$ carries over to the corresponding Archimedean copula.
\begin{theorem}\label{thm:abs.cont.sing.dis}
Suppose that $C \in \mathcal{C}_{ar}^d$ has generator $\psi$ and Williamson measure $\gamma \in \mathcal{P}_{\mathcal{W}_d}$. Then the following 
assertions hold: 
\begin{itemize}
    \item[(1)] If $\gamma$ is absolutely continuous then $\mu_{C}^{abs}(\mathbb{I}^d)=1$, i.e., $C$ is absolutely continuous. 
    \item[(2)] If $\gamma$ is discrete then $\mu_{C}^{dis}(\mathbb{I}^d)=1$.  
    \item[(3)] If $\gamma$ is singular without point masses then $\mu_{C}^{sing}(\mathbb{I}^d)=1$. 
\end{itemize}
\end{theorem}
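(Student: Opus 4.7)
The plan is to express the Markov kernel $K_C(\mathbf{x},\cdot)$ directly in terms of the Williamson measure $\gamma$ and then read off the regularity of $\mu_C$ via the disintegration formulas \eqref{eq:decom.disint}. Substituting Lemma \ref{lem:Left_hand_G} into the Markov kernel from Theorem \ref{th:multi:kernel} cancels the common factor $(-1)^{d-1}(d-1)!$ and yields, for every $\mathbf{x}$ in the $\mu_{C^{1:d-1}}$-conull set $\Lambda:=\mathbb{I}^{d-1}\setminus L_0^{1:d-1}$,
$$
K_C(\mathbf{x},[0,y])\;=\;\frac{1}{Z_\mathbf{x}}\int_{(0,\,1/(u_\mathbf{x}+\varphi(y))]} t^{d-1}\,\mathrm{d}\gamma(t),\qquad Z_\mathbf{x}:=\int_{(0,\,1/u_\mathbf{x}]} t^{d-1}\,\mathrm{d}\gamma(t)>0,
$$
where $u_\mathbf{x}:=\sum_{i=1}^{d-1}\varphi(x_i)\in(0,\varphi(0))$. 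Equivalently, $K_C(\mathbf{x},\cdot)$ is the pushforward, under the continuous strictly increasing map $T_\mathbf{x}\colon(0,1/u_\mathbf{x}]\to(0,1]$, $T_\mathbf{x}(t):=\psi(1/t-u_\mathbf{x})$, of the probability measure $\tilde\gamma_\mathbf{x}$ with $d\tilde\gamma_\mathbf{x}=Z_\mathbf{x}^{-1} t^{d-1}\mathbf{1}_{(0,1/u_\mathbf{x}]}\,d\gamma$. The key observation is that the weighting by $t^{d-1}\mathbf{1}_{(0,1/u_\mathbf{x}]}$ preserves the Lebesgue type: if $\gamma$ is discrete (respectively absolutely continuous, respectively singular without atoms), then so is $\tilde\gamma_\mathbf{x}$.

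For (2), writing $\gamma=\sum_n\alpha_n\delta_{t_n}$, the measure $\tilde\gamma_\mathbf{x}=\sum_{t_n\le 1/u_\mathbf{x}}\tfrac{\alpha_n t_n^{d-1}}{Z_\mathbf{x}}\delta_{t_n}$ is discrete, and strict monotonicity of $T_\mathbf{x}$ immediately produces the discrete pushforward $K_C(\mathbf{x},\cdot)=\sum_n \tfrac{\alpha_n t_n^{d-1}}{Z_\mathbf{x}}\delta_{\psi(1/t_n-u_\mathbf{x})}$ for every $\mathbf{x}\in\Lambda$. Hence $K_C^{dis}(\mathbf{x},\mathbb{I})=1$ on $\Lambda$, and \eqref{eq:decom.disint} returns $\mu_C^{dis}(\mathbb{I}^d)=1$.

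For (3), when $\gamma$ is singular without atoms, so is $\tilde\gamma_\mathbf{x}$, and strict monotonicity of $T_\mathbf{x}$ on its support prevents the pushforward from creating point masses. It remains to show that $T_\mathbf{x}$ maps Lebesgue null sets in $(0,1/u_\mathbf{x}]$ to Lebesgue null sets. Writing $T_\mathbf{x}=\psi\circ h$ with $h(t)=1/t-u_\mathbf{x}$ smooth on $[1/n,1/u_\mathbf{x}]$, the task reduces to absolute continuity of $\psi$ on every compact subinterval of $[0,\infty)$. This follows from convexity of $\psi$ together with $\int_0^M(-\psi')\,\mathrm{d}\lambda=\psi(0)-\psi(M)\le 1$ and a monotone convergence argument at $0$. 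Consequently $T_\mathbf{x}$ enjoys Lusin's (N) property on each $[1/n,1/u_\mathbf{x}]$, and a countable exhaustion transfers the conclusion to $(0,1/u_\mathbf{x}]$. Applying \eqref{eq:decom.disint} delivers $\mu_C^{sing}(\mathbb{I}^d)=1$. The main technical obstacle lies exactly in this Lusin step, since $\psi$ need not be Lipschitz at $0$ when $\psi'_+(0)=-\infty$; convexity together with the $L^1$-bound on $\psi'$ is what saves us.

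For (1), I would sidestep the pushforward and use the criterion recalled just before \eqref{eq:arch:density}: if $\gamma=g\,\mathrm{d}\lambda$, Lemma \ref{lem:Left_hand_G} rewrites $(-1)^{d-1}D^-\psi^{(d-2)}(z)=(d-1)!\int_0^{1/z}t^{d-1}g(t)\,\mathrm{d}\lambda(t)$, which is the composition of the absolutely continuous function $s\mapsto\int_0^s t^{d-1}g(t)\,\mathrm{d}\lambda(t)$ with the smooth diffeomorphism $z\mapsto 1/z$ of $(0,\infty)$. Thus $D^-\psi^{(d-2)}$ is continuous, so $\psi^{(d-1)}$ exists and coincides with it, and $\psi^{(d-1)}$ is locally absolutely continuous on $(0,\infty)$; invoking the cited criterion of \cite{multiArchNeslehova} yields absolute continuity of $C$, i.e., $\mu_C^{abs}(\mathbb{I}^d)=1$.
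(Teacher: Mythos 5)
Your proposal is correct and follows essentially the same route as the paper: part (1) is the paper's argument verbatim (continuity and absolute continuity of $D^-\psi^{(d-2)}$ via Lemma \ref{lem:Left_hand_G}, then Proposition 4.2 of \cite{multiArchNeslehova}), and parts (2) and (3) rest on the same kernel representation \eqref{eq:kern.gamma} combined with the disintegration \eqref{eq:decom.disint}. The only divergence is the final verification in part (3): the paper shows the conditional distribution function is continuous with derivative zero $\lambda$-almost everywhere via a chain-rule argument on the measure-generating function of $t^{d-1}\,\mathrm{d}\gamma$, whereas you push forward the null carrier set of $\tilde\gamma_\mathbf{x}$ and check Lusin's property (N) for $\psi\circ h$ on a compact exhaustion --- two equivalent devices, and your observation that absolute continuity of $\psi$ on compacts (from $\psi(z)=\int_{[z,\infty)}-\psi'\,\mathrm{d}\lambda$) supplies (N) is sound.
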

\begin{proof}
(i) The first assertion has already been established in \cite{multiArchNeslehova} and can alternatively be proved 
easily as follows: Suppose that $\gamma$ is absolutely continuous with density $f$. Then using Lemma \ref{lem:Left_hand_G}
we have 
$$
(-1)^{d-2}D^{-}\psi^{(d-2)}(z) = -(d-1)!\int_{(0,\frac{1}{z}]}t^{d-1} f(t) \mathrm{d}\lambda(t).
$$  
Considering that the right-hand side is obviously continuous in $z$ it follows that $\psi^{(d-1)}$ exists on 
the full interval $(0,\infty)$. Moreover, the right-hand side is easily seen to be absolutely continuous, hence
Proposition 4.2 in \cite{multiArchNeslehova} yields absolute continuity of $C$.  \\
For the proof of the remaining two assertions first notice that is suffices to consider $\mathbf{x}$ fulfilling
$M(\mathbf{x})<1$ and $\mathbf{x} \not \in L_0^{1:d-1}$ and that in this case we have 
$0< \sum_{i=1}^{d-1} \varphi(x_i)< \varphi(0) \in (0,\infty]$ as well as  
$(-1)^{d-1}D^- \psi^{(d-2)} \left(\sum_{i=1}^{d-1} \varphi(x_i) \right)>0$. Moreover, additionally assuming 
$y \geq f^0(\mathbf{x})$ using Lemma \ref{lem:Left_hand_G}
the Markov kernel $K_C(\cdot,\cdot)$ according to eq. (\ref{eq:mk1}) can be expressed as  
\begin{align}\label{eq:kern.gamma}
K_C(\mathbf{x},[0,y]) &= \frac{\int_{I_y}t^{d-1} \mathrm{d} \gamma(t)}{\int_{I_1}t^{d-1} \mathrm{d} \gamma(t)}
\end{align} 
with $I_y=\left(0, \frac{1}{\sum_{i=1}^{d-1} \varphi(x_i)\, + \varphi(y)} \right]$ for every $y \in \mathbb{I}$. \\
(ii) Suppose now that $\gamma$ is discrete. Then there exist $a_1,a_2,\ldots \in (0,\infty)$ and constants
$\alpha_1,\alpha_2,\ldots \in \mathbb{I}$ with $\sum_{j=1}^\infty \alpha_j=1$ such that 
$\gamma=\sum_{j=1}^\infty \alpha_j \delta_{a_j}$ holds, and eq. (\ref{eq:kern.gamma}) simplifies to 
\begin{align}\label{eq:kern.gamma.temp1}
K_C(\mathbf{x},[0,y]) &= \frac{\sum_{j: a_j \in I_y} a_j^{d-1} \alpha_j}{\sum_{j: a_j \in I_1} a_j^{d-1} \alpha_j}.
\end{align}
Notice that we do not assume all $\alpha_j$ to be greater than zero, so the case of finitely many point masses is 
covered as well. 
Considering, firstly, that $K_C(\mathbf{x},[0,y])=0$ for $y < f^0(\mathbf{x})$ and that 
the condition $a_j \in I_y$ is equivalent to $y \geq \psi(\frac{1}{a_j} - \sum_{i=1}^{d-1} \varphi(x_i))$
the function $y \mapsto K_C(\mathbf{x},[0,y])$ is easily seen to be the distribution function of the 
discrete probability measure on $\mathbb{I}$ having point mass 
$\frac{a_j^{d-1} \alpha_j}{\sum_{l: a_l \in I_1} a_l^{d-1} \alpha_l}$ in 
$\psi(\frac{1}{a_j} - \sum_{i=1}^{d-1} \varphi(x_i))$ for every $j$ with $a_j \in I_1$. 
In other words, $K_C(\mathbf{x},\cdot)$ is a discrete 
probability measure, so $K_C(\mathbf{x},\cdot) = K_C^{dis}(\mathbf{x},\cdot)$ holds 
for all $\mathbf{x}$ fulfilling $M(\mathbf{x})<1$ and $\mathbf{x} \not \in L_0^{1:d-1}$. 
Having this and using eq. (\ref{eq:decom.disint}) $\mu_C^{dis}(\mathbb{I}^d)=1$ follows. \\
(iii) Finally suppose that $\gamma$ is singular without point masses and again consider 
some $\mathbf{x}$ fulfilling $M(\mathbf{x})<1$ and $\mathbf{x} \not \in L_0^{1:d-1}$. 
Then Theorem \ref{thm:level.sets.williamson}
implies that we have $\mu_C(L_t)=0$ for every $t \in \mathbb{I}$ so the 
conditional distribution function $y \mapsto F^C_\mathbf{x}(y)=K_C(\mathbf{x},[0,y])$ is continuous
and it suffices to show that its derivative $(F^C_\mathbf{x})'$ fulfills $(F^C_\mathbf{x})'(y)=0$ 
for $\lambda$-almost every $y > f^0(\mathbf{x})$, which can be done as follows:
As already mentioned before, our choice of $\mathbf{x}$ implies that 
$0< \sum_{i=1}^{d-1} \varphi(x_i)< \varphi(0) \in (0,\infty]$, so in particular 
$$
\frac{1}{\sum_{i=1}^{d-1} \varphi(x_i)} > \frac{1}{\varphi(0)}
$$ 
and therefore $\gamma(I_1)>0$. Defining the measure $m: \mathcal{B}((0,\infty)) \rightarrow [0,\infty]$
by
$$
m(B):= \int_B t^{d-1} \mathrm{d} \gamma(t)
$$
it follows that $m$ is $\sigma$-finite (in fact, finite for every finite interval), singular with respect to 
$\lambda$, has no point masses and fulfills $0<m(I_1)< \infty$. Letting $G_m: I_1 \rightarrow [0,\infty)$ 
denote the measure-generating function induced by $m$ via $G_m(x):=m([0,x])$ singularity of $m$ implies that 
the set $\Lambda$, defined by
$$
\Lambda:=\{z \in I_1: G_m'(z)=0\}
$$
fulfills $\lambda(\Lambda)=\lambda(I_1)$. Defining $\Upsilon$ by
$$
\Upsilon := \left\{y \in (f^0(\mathbf{x}),1]: \, \frac{1}{\sum_{i=1}^{d-1} \varphi(x_i) + \varphi(y)} \in \Lambda\right\} \in \mathcal{B}(\mathbb{I}),
$$
using the fact that $\varphi$ is differentiable and strictly decreasing on $(0,1)$ with derivative 
bounded away from $0$ on any compact interval $[a,b] \subseteq (0,1)$ it follows (see \cite{kannan}) that
$\lambda(\Upsilon)=\lambda((f^0(\mathbf{x}),1])$. For every $y \in \Upsilon$, however, the chain rule yields
$$
(F^C_\mathbf{x})'(y) = \frac{1}{m(I_1)} \,\underbrace{G_m'\left( \frac{1}{\sum_{i=1}^{d-1} \varphi(x_i) + \varphi(y)} \right)}_{=0} \cdot \frac{\partial}{\partial y}\left( \frac{1}{\sum_{i=1}^{d-1} \varphi(x_i) + \varphi(y)} \right) =0.
$$  
Altogether we have shown that for arbitrary $\mathbf{x}$ fulfilling $M(\mathbf{x})<1$ and $\mathbf{x} \not \in L_0^{1:d-1}$
the measure $K_C(\mathbf{x},\cdot)$ is singular without point masses, i.e., 
$K_C(\mathbf{x},\cdot)=K_C^{sing}(\mathbf{x},\cdot)$ holds and considering 
eq. (\ref{eq:decom.disint}) again $\mu_C^{sing}(\mathbb{I}^d)=1$ follows.
\end{proof}

\begin{Rem}
The second assertion of Theorem \ref{thm:abs.cont.sing.dis} can be proved in the following alternative way
(the afore-mentioned version was chosen in order to underline the similarity of the discrete and the singular case): 
Let $\gamma=\sum_{j \in J} \alpha_j \delta_{a_j}$ for some finite our countably infinite index set 
$J \subseteq \mathbb{N}$ where $\alpha_j>0$ for every $j \in J$, and $\sum_{j \in J} \alpha_j=1$ (without loss of generality
we assume $a_i \neq a_j$ for $i \neq j$). Then for every $j \in J$ there exists a unique $t_j \in [0,1)$
with $\frac{1}{\varphi(t_j)}=a_j$ and according to Theorem \ref{thm:level.sets.williamson} we have 
$$
\mu_C \left(\bigcup_{j \in J} L_{t_j}\right) = \sum_{j\in J} \mu_C(L_{t_j}) = \sum_{j\in J} \gamma 
\left(\left\{\frac{1}{\varphi(t_j)} \right\} \right) =1.
$$
The set $L:=\bigcup_{j \in J} L_{t_j}$ is as at most countable union of Borel sets itself 
an element of $\mathcal{B}(\mathbb{I}^d)$. 
Applying disintegration we have 
\begin{align*}
1 = \mu_C(L) = \int_{\mathbb{I}^{d-1}} K_C(\mathbf{x},L_{\mathbf{x}}) \mathrm{d}\mu_{C^{1:d-1}}(\mathbf{x}),
\end{align*}
from which $K_C(\mathbf{x},L_{\mathbf{x}})=1$ for $\mu_{C^{1:d-1}}$-almost every $\mathbf{x}$ follows. 
Considering that the $\mathbf{x}$-cut $L_\mathbf{x}$ of $L$ is at most countably infinite 
we get $K_C^{dis}(\mathbf{x},L_{\mathbf{x}})=1$ for $\mu_{C^{1:d-1}}$-almost every $\mathbf{x}$ from which the 
desired result follows. 
\end{Rem}

\noindent Theorem \ref{thm:abs.cont.sing.dis} has the following consequence, whereby we will let
$\mathcal{C}_{ar,abs}^d$ denote the family of all absolutely continuous $d$-dimensional Archimedean copulas, 
$\mathcal{C}_{ar,dis}^d$ the family of all $C \in \mathcal{C}_{ar}^d$ with $\mu_C^{dis}(\mathbb{I}^d)=1$,
and $\mathcal{C}_{ar,sing}^d$ the family of all $C \in \mathcal{C}_{ar}^d$ with $\mu_C^{sing}(\mathbb{I}^d)=1$. 
\begin{Cor}\label{cor:dense}
$\mathcal{C}_{ar,dis}^d, \, \mathcal{C}_{ar,abs}^d$ and $\mathcal{C}_{ar,sing}^d$ are dense in $(\kc_{ar}^d,d_\infty)$.
\end{Cor}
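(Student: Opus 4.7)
The plan is to work at the level of Williamson measures, leveraging Theorems \ref{thm:weak.convergence.measures} and \ref{thm:abs.cont.sing.dis} to reduce the claim to a weak-approximation statement in $\mathcal{P}_{\mathcal{W}_d}$. Since weak convergence $\gamma_n \to \gamma$ of Williamson measures is equivalent to $d_\infty$-convergence $C_{\gamma_n}\to C_\gamma$, and since absolute continuity, discreteness, and singularity without point masses of $\gamma$ each transfer to the corresponding structural property of $C_\gamma$, it is enough to show that the three subclasses of $\mathcal{P}_{\mathcal{W}_d}$ consisting of absolutely continuous, purely discrete, and singular-without-atoms measures are each weakly dense in $\mathcal{P}_{\mathcal{W}_d}$.

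Fix $\gamma\in \mathcal{P}_{\mathcal{W}_d}$. First I would produce, temporarily ignoring the normalization $\int_{\mathbb{I}}(1-t)^{d-1}\,d\gamma_n(t)=\tfrac12$, three sequences $\tilde\gamma_n^{abs},\tilde\gamma_n^{dis},\tilde\gamma_n^{sing}$ of probability measures on $(0,\infty)$ of the desired regularity type, each converging weakly to $\gamma$. The discrete case is handled by partitioning $(0,\infty)$ into short dyadic intervals $I_k^n$ and placing the total $\gamma$-mass $\gamma(I_k^n)$ at an interior point $a_k^n\in I_k^n$. The absolutely continuous case is handled by convolving $\gamma$ with a smooth one-sided mollifier $\eta_n$ supported in $[\epsilon_n, 2\epsilon_n]$ with $\epsilon_n\downarrow 0$, producing an $L^1$-density supported in $(\epsilon_n,\infty)$. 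The singular case is obtained from the discrete approximation by replacing each atom at $a_k^n$ with a suitably scaled and translated copy of the standard Cantor measure living in a small subinterval of $I_k^n$; choosing these Cantor supports pairwise disjoint and bounded away from $0$ makes the resulting measure $\lambda$-singular and atomless. Weak convergence $\tilde\gamma_n\to\gamma$ is routine in each case.

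The second step is to restore normalization. For any probability measure $\tilde\gamma$ on $(0,\infty)$ with $\tilde\gamma(\{0\})=0$, its Williamson transform $\tilde\psi=\mathcal{W}_d\tilde\gamma$ is a generator (in general not normalized), continuous and strictly decreasing from $\tilde\psi(0)=1$ to $0$, so the intermediate value theorem produces a unique $c>0$ with $\tilde\psi(c)=\tfrac12$. Letting $\gamma_n$ denote the push-forward of $\tilde\gamma_n$ under the rescaling $t\mapsto c_n t$, a direct change-of-variables computation gives $\mathcal{W}_d\gamma_n(z)=\tilde\psi_n(c_n z)$, hence $\gamma_n(\{0\})=0$ and $\mathcal{W}_d\gamma_n(1)=\tfrac12$, i.e.\ $\gamma_n\in\mathcal{P}_{\mathcal{W}_d}$. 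Crucially, rescaling by a positive constant is a homeomorphism of $(0,\infty)$ that preserves $\lambda$-null sets, and hence preserves absolute continuity, discreteness, and singularity without point masses. Since $\tilde\gamma_n\to\gamma$ weakly and $(1-tz)_+^{d-1}$ is bounded and continuous, the transforms $\tilde\psi_n$ converge pointwise to $\psi$; a Dini-type argument in the spirit of Theorem \ref{th:main} upgrades this to uniform convergence on $[0,\infty)$, and strict monotonicity of $\psi$ at $z=1$ then forces $c_n\to 1$, whence $\gamma_n\to\gamma$ weakly. Applying Theorem \ref{thm:weak.convergence.measures} one more time delivers $d_\infty$-convergence of the associated copulas and completes the argument.

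The main obstacle is the tension between the three approximation schemes and the normalization constraint $\int_{\mathbb{I}}(1-t)^{d-1}\,d\gamma_n=\tfrac12$; the rescaling trick $t\mapsto c_n t$ resolves this uniformly across the three regularity classes while preserving each one. A secondary and somewhat delicate point arises only in the singular construction, where one must verify that the sum of Cantor-type blocks remains $\lambda$-singular as a whole and free of atoms; this is secured by the disjointness and strict positivity of the chosen supports.
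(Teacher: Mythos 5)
Your proposal is correct, and its overall architecture coincides with the paper's: reduce the claim via Theorem \ref{thm:weak.convergence.measures} and Theorem \ref{thm:abs.cont.sing.dis} to weak density of the three regularity classes inside $\mathcal{P}_{\mathcal{W}_d}$, and handle the normalization constraint by pushing forward under $t\mapsto c_n t$ with $c_n$ chosen so that the rescaled Williamson transform equals $\tfrac12$ at $z=1$ --- this rescaling step is precisely the paper's Lemma \ref{lem:appb1}, including your (correct) conclusion $c_n\to 1$. Where you genuinely diverge is in the approximation step itself, i.e.\ the content of the paper's Theorem \ref{thm:approx.scheme}: the paper runs a single unified scheme that interpolates the distribution function $F$ of $\gamma$ at a dense set of continuity points using rescaled copies of one template function $f$ (taken absolutely continuous, a step function, or a singular continuous function, respectively), so that all three cases follow from one construction plus the ``convergence on a dense set implies weak convergence'' criterion; you instead use three separate standard devices (dyadic discretization, one-sided mollification, and replacement of atoms by disjoint scaled Cantor measures). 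Both routes are valid; yours is built from more familiar ingredients and makes explicit the point --- left implicit in the paper --- that the linear rescaling preserves each regularity class, while the paper's template construction avoids case-by-case verification of weak convergence and of atomlessness/singularity of the infinite sum of Cantor blocks. One cosmetic remark: in your singular construction it is neither needed nor in general possible to keep the union of the Cantor supports bounded away from $0$ (only each individual block must lie in $(0,\infty)$, which already guarantees $\gamma_n(\{0\})=0$); and for $c_n\to 1$ pointwise convergence of $\tilde\psi_n$ together with strict monotonicity of $\psi$ near $1$ already suffices, so the Dini/P\'olya upgrade to uniform convergence, while true, is not essential there.
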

\begin{proof}
Let $C$ be an arbitrary Archimedean copula and $\gamma$ denote its corresponding Williamson measure on 
$\mathcal{B}([0,\infty))$. The stated results now follow from the fact (see Theorem \ref{thm:approx.scheme}) that 
$\gamma$ is the weak limit of a sequence of discrete, of a sequence of absolutely continuous and of a sequence of 
singular Williamson measures in combination with Theorem \ref{thm:weak.convergence.measures}
and Theorem \ref{thm:abs.cont.sing.dis}. 
\end{proof}

\section{Singular Archimedean copulas with full support}\label{sec7:examples}
The results established in the previous section allow to prove the existence of multivariate Archimedean copulas
which, considering their handy analytic form, exhibit a surprisingly irregular behavior. 
In fact, we will construct singular $d$-dimensional Archimedean copulas with full support $\mathbb{I}^d$
and thereby extend the examples given in \cite{p21} to the multivariate setting. As in the previous section 
the representation in terms of Williamson measures will play a crucial role. 
We first focus on the construction of some $C \in \mathcal{C}_{ar}^d$ fulfilling that $C$ has full support although 
$\mu_{C^{1:d-1}}$-almost every conditional distribution $K_C(\mathbf{x},\cdot)$ is a singular measure 
without point masses and then discuss the discrete analogue.
\begin{theorem}\label{thm:arch.cop.full.supp}
There exists a copula $C \in \mathcal{C}_{ar}^d$ with the following properties:
\begin{itemize}
\item[(1)] $C$ is singular continuous and has full support.
\item[(2)] For $\mu_{C^{1:d-1}}$-almost every $\mathbf{x} \in \mathbb{I}^{d-1}$ the conditional distribution function 
$y \mapsto K_C(\mathbf{x},[0,y]) $ is continuous, strictly increasing and singular. 
\item[(3)] All level sets $L_t$ of $C$ fulfill $\mu_C(L_t) = 0$.
\item[(4)] The Kendall distribution function $F_K^d$ of $C$ is continuous, strictly increasing and singular. 
\end{itemize}
\end{theorem}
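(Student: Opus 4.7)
The plan is to realize $C$ as $C_\gamma$ via the Williamson transform, where $\gamma \in \mathcal{P}_{\mathcal{W}_d}$ is chosen to be singular continuous (i.e.\ $\gamma\perp\lambda$ and atomless) and to have full support $[0,\infty)$. All four properties will then be extracted from the results of Section \ref{sec5:williamson}. For the construction of $\gamma$ I would take a Cantor-type singular continuous probability measure $\nu_n$ on $[n,n+1]$ whose support is a Cantor-like set dense in $[n,n+1]$, form the countable convex combination $\tilde\gamma := \sum_{n\geq 0} 2^{-n-1}\nu_n$ (singular continuous, atomless, with full support $[0,\infty)$ and $\tilde\gamma(\{0\})=0$), and adjust the normalization by a dilation $\gamma_\alpha(B) := \tilde\gamma(\alpha B)$. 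Since $\alpha\mapsto\int(1-t)_+^{d-1}\,\mathrm{d}\gamma_\alpha(t)$ decreases continuously from $1$ to $0$ as $\alpha$ runs through $(0,\infty)$, the intermediate value theorem picks some $\alpha^*$ for which the resulting $\gamma := \gamma_{\alpha^*}$ lies in $\mathcal{P}_{\mathcal{W}_d}$; dilation preserves the qualitative properties (atomless, singular, full support, $\gamma(\{0\})=0$).

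Once $\gamma$ is in hand, set $C := C_\gamma$. Singularity is immediate from Theorem \ref{thm:abs.cont.sing.dis}(3), which gives $\mu_C^{sing}(\mathbb{I}^d)=1$; since $K_C^{sing}(\mathbf{x},\cdot)$ carries no point masses by construction, $C$ is in fact singular continuous, covering the first half of property (1). Property (3) follows directly from Theorem \ref{thm:level.sets.williamson}: $\mu_C(L_t)=\gamma(\{1/\varphi(t)\})=0$ for $t\in(0,1]$ by atomlessness, and $\mu_C(L_0)=0$ holds because $0\in\mathrm{supp}(\gamma)$, so $\psi$ is strict by Lemma \ref{lem:measure.strict.cop}. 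For property (4), the identity $F_K^d(t)=\gamma([0,1/\varphi(t)])$ yields continuity (atomless $\gamma$) and strict monotonicity (full support of $\gamma$ combined with strict monotonicity of $t\mapsto 1/\varphi(t)$ on $(0,1]$); singularity of $F_K^d$ is obtained by differentiating and arguing, via the chain-rule argument used in the proof of Theorem \ref{thm:abs.cont.sing.dis}(3), that $(F_K^d)'=0$ $\lambda$-almost everywhere, using that $F_\gamma$ is singular and that $\varphi$ is differentiable with derivative bounded away from $0$ on each compact subinterval of $(0,1)$.

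For properties (2) and the full-support part of (1) I would work with the formula
\[
K_C(\mathbf{x},[0,y]) \;=\; \frac{\int_{(0,\,1/(\sigma(\mathbf{x})+\varphi(y))]} t^{d-1}\,\mathrm{d}\gamma(t)}{\int_{(0,\,1/\sigma(\mathbf{x})]} t^{d-1}\,\mathrm{d}\gamma(t)}, \qquad \sigma(\mathbf{x}) := \sum_{i=1}^{d-1}\varphi(x_i),
\]
obtained by combining Theorem \ref{th:multi:kernel} with Lemma \ref{lem:Left_hand_G}, valid for $\mu_{C^{1:d-1}}$-almost every $\mathbf{x}\in(0,1)^{d-1}$. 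The finite measure $m:=t^{d-1}\gamma$ inherits atomlessness, singularity, and full support on $(0,\infty)$ from $\gamma$, so $y\mapsto K_C(\mathbf{x},[0,y])$ is continuous, strictly increasing, and singular on $[0,1]$ by the same chain-rule argument as in step (4), giving property (2). Property (1) is then completed via disintegration: $C^{1:d-1}$ has full support $\mathbb{I}^{d-1}$ (since $\psi$ is strict, $c^{1:d-1}>0$ on $(0,1)^{d-1}$ by \eqref{eq:arch:density}), and combined with strict monotonicity of the kernels it shows that every open box in $\mathbb{I}^d$ carries positive $\mu_C$-mass. The main obstacle I anticipate is twofold: first, the concrete construction and normalization of the singular continuous $\gamma$ with full support, and second, the chain-rule passage from singularity of $F_\gamma$ to singularity of the composition with $1/\varphi$ (respectively $1/(\sigma+\varphi(\cdot))$), which must be carried out carefully on the set where $\varphi'$ is well-defined and nonzero, mimicking the argument of Theorem \ref{thm:abs.cont.sing.dis}(3).
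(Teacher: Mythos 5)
Your proposal is correct and follows essentially the same route as the paper: construct an atomless, singular Williamson measure $\gamma$ with full support on $[0,\infty)$ (hence $0\in\mathrm{supp}(\gamma)$, giving strictness), normalize by a dilation (the paper does this via Lemma \ref{lem:appb1}, you via a direct intermediate-value argument), and read off all four properties from Theorem \ref{thm:abs.cont.sing.dis}, Theorem \ref{thm:level.sets.williamson}, eq. \eqref{eq:kern.gamma}, disintegration, and the chain-rule argument. The only point to tidy up is the phrase ``Cantor-like set dense in $[n,n+1]$'' --- a genuine Cantor set is nowhere dense, so to get full support you need either a strictly increasing singular homeomorphism (as the paper does, citing \cite{question,hewitt1965}) or a countable convex combination of Cantor measures over a dense family of subintervals.
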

\begin{proof}
Suppose that $h$ is a strictly increasing singular homemorphism of $\mathbb{I}$, i.e., a strictly increasing bijective 
transformation mapping $\mathbb{I}$ to itself fulfilling $h'(x)=0$ for $\lambda$-almost every $x \in \mathbb{I}$
(see, e.g., \cite{question,hewitt1965} for several well-known examples).  
Defining $F: [0,\infty) \rightarrow [0,1]$ by
$$
F(x)= \frac{1}{2} \, h\left(\frac{x}{2} \right) \mathbf{1}_{[0,2)}(x) \, + \, 
     \sum_{i=1}^\infty \left(1-\frac{1}{2^i} + \frac{1}{2^{i+1}}\, h\left(\frac{x-2^i}{2^i} \right)\right) 
     \mathbf{1}_{[2^i,2^{i+1})}(x) 
$$ 
obviously yields a strictly increasing continuous function $F$ which, by construction, fulfills $F'=0$ $\lambda$-almost everywhere. Letting $\beta$ denote the corresponding probability measure on $\mathcal{B}([0,\infty))$
it follows that $\beta$ is singular without point masses. Furthermore, the support of $\beta$ contains $0$ 
but in general does not need to be an element of $\mathcal{P}_{\mathcal{W}_d}$, we only know that 
$$
\int_{\mathbb{I}}(1-t)^{d-1} \mathrm{d}\beta(t) \in (0,1).
$$  
Proceeding, however, like in the proof of Lemma \ref{lem:appb1} we can find some constant $a \in (0,\infty)$
such that the push-forward $\gamma=\beta^{T_a}$ with $T_a(x)=ax$ fulfills $\gamma \in \mathcal{P}_{\mathcal{W}_d}$.
Considering that $\gamma$ is obviously singular (without point masses) too and that the support of $\gamma$ 
coincides with $[0,\infty)$ using Lemma \ref{lem:measure.strict.cop} as well as Theorem \ref{thm:abs.cont.sing.dis} it 
follows that the corresponding $d$-dimensional Archimedean copula $C=C_\gamma$ is strict and fulfills 
$\mu_{C_\gamma}^{sing}(\mathbb{I}^d)=1$. Furthermore, according to the proof of Theorem \ref{thm:abs.cont.sing.dis} 
for $\mu_{C^{1:d-1}}$-almost every $\mathbf{x} \in \mathbb{I}^{d-1}$ the conditional distribution function 
$y \mapsto K_C(\mathbf{x},[0,y])$ is continuous and singular. Hence, considering that 
$\gamma$ has full support using eq. (\ref{eq:kern.gamma}) yields that $y \mapsto K_C(\mathbf{x},[0,y])$ is also 
strictly increasing on $\mathbb{I}$. \\
Having that, showing that $C$ has full support is straightforward:
In fact, for every $(\mathbf{x},y) \in (0,1)^{d-1} \times (0,1)$ and every open rectangle 
$U=U_1\times \cdots \times U_d $ with open non-empty intervals $U_1,\ldots,U_d \subseteq (0,1)$ fulfilling
$(\mathbf{x},y) \in U$ and $U \subseteq (0,1)^d$ we can proceed as follows:  
Considering that $C^{1:d-1}$ is absolutely continuous 
strictness of $\psi$ implies that the density $c^{1:d-1}$ of $C^{1:d-1}$ fulfills $c^{1:d-1}>0$ 
$\lambda_{d-1}$-almost everywhere in $(0,1)^{d-1}$ using disintegration and the fact that 
$K_C(\mathbf{x},\cdot)$ has full support and hence fulfills $K_C(\mathbf{x},U_d)>0$ 
for $\mu_{C^{1:d-1}}$-almost every $\mathbf{x} \in \mathbb{I}^{d-1}$
it follows that 
\begin{align*}
\mu_C(U) &= \int_{\times_{j=1}^{d-1} U_j} K_C(\mathbf{x},U_d) \, \mathrm{d}\mu_{C^{1:d-1}}(\mathbf{x}) > 0.
\end{align*}
This shows that $(\mathbf{x},y)$ is contained in the support of $\mu_C$, since supports are closed 
the the support of $\mu_C$ is $\mathbb{I}^d$ and the first two assertions are proved.
Since $\gamma$ has no point masses the third assertion is an immediate consequence of 
Theorem \ref{thm:level.sets.williamson} and it remains to prove the last assertion.  
Again according to Theorem \ref{thm:level.sets.williamson} 
$$
F_K^d(t) = \gamma([0,\tfrac{1}{\varphi(t)}]) = \gamma((0,\tfrac{1}{\varphi(t)}])
$$
holds for every $t \in (0,1]$, implying that $F_K^d$ is continuous and strictly increasing. Finally, using 
a chain rule argument similar to the one at the end of the proof of Theorem \ref{thm:abs.cont.sing.dis}
shows that $(F_K^d)'(x)=0$ holds for $\lambda$-almost every $x \in \mathbb{I}$ and the proof is complete. 
\end{proof}

Starting with the probability measure $\beta:= \sum_{i=1}^\infty 2^{-i} \delta_{q_i}$ with 
$\{q_1,q_2,\ldots\}$ denoting an enumeration of the rationals in $(0,\infty)$ and proceeding 
analogously to the proof of the previous theorem yields the following discrete version of it: 

\begin{theorem}\label{thm:arch.cop.full.supp.dis}
There exists a copula $C \in \mathcal{C}_{ar}^d$ with the following properties:
\begin{itemize}
\item[(1)] $C$ is singular continuous and has full support.
\item[(2)] For $\mu_{C^{1:d-1}}$-almost every $\mathbf{x} \in \mathbb{I}^{d-1}$ the conditional distribution function 
$y \mapsto K_C(\mathbf{x},[0,y])$ is a strictly increasing step function. 
\item[(3)] There exists a dense countable subset $\mathcal{Q}$ of $(0,1)$ such that $\mu_C(L_t)>0$ if, and only if, 
$t \in \mathcal{Q}$.
\item[(4)] The Kendall distribution function $F_K^d$ of $C$ is a strictly increasing step function.
\end{itemize}
\end{theorem}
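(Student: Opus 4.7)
The natural plan is to mirror the proof of Theorem \ref{thm:arch.cop.full.supp}, feeding a purely discrete base measure into the same machinery in place of a singular continuous one. Concretely, enumerating $\mathbb{Q}\cap(0,\infty)$ as $\{q_i\}_{i\in\mathbb{N}}$, I set $\beta:=\sum_{i=1}^\infty 2^{-i}\delta_{q_i}$ and, exactly as in the proof of Theorem \ref{thm:arch.cop.full.supp}, choose a scaling constant $a>0$ so that the push-forward $\gamma:=\beta^{T_a}$ with $T_a(x)=ax$ lies in $\mathcal{P}_{\mathcal{W}_d}$. The resulting $\gamma$ is again purely discrete, its atoms $\{aq_i\}_{i\in\mathbb{N}}$ are dense in $(0,\infty)$ and in particular $0$ belongs to the support of $\gamma$. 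Let $C:=C_\gamma$ with generator $\psi$.

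From this setup each of the four assertions should reduce to citing an earlier result. Strictness of $\psi$ follows from Lemma \ref{lem:measure.strict.cop}, while Theorem \ref{thm:abs.cont.sing.dis}(2) gives $\mu_C^{dis}(\mathbb{I}^d)=1$; since $\mu_{C^{1:d-1}}$ is absolutely continuous, disintegration forces $\mu_C(\{(\mathbf{x}_0,y_0)\})=K_C(\mathbf{x}_0,\{y_0\})\cdot\mu_{C^{1:d-1}}(\{\mathbf{x}_0\})=0$ for every point, so $\mu_C$ is singular with respect to $\lambda_d$ yet atomless on $\mathbb{I}^d$, which yields the singular-continuous part of (1). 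The computation in the proof of Theorem \ref{thm:abs.cont.sing.dis}(ii) already identifies $K_C(\mathbf{x},\cdot)$, for $\mu_{C^{1:d-1}}$-almost every $\mathbf{x}$, as the discrete probability measure with atoms at $\psi\bigl(\tfrac{1}{aq_j}-\sum_{i=1}^{d-1}\varphi(x_i)\bigr)$ for $aq_j\in I_1$; density of $\{aq_j\}$ in $(0,\infty)$, together with continuity and strict monotonicity of $\psi$, turns these atoms into a dense subset of $[0,1]$, proving (2). For (3), Theorem \ref{thm:level.sets.williamson} yields $\mu_C(L_t)=\gamma(\{1/\varphi(t)\})$, which is positive iff $1/\varphi(t)$ equals some $aq_i$, so the desired countable dense set is $\mathcal{Q}:=\{\psi(1/(aq_i)):i\in\mathbb{N}\}$, dense in $(0,1)$ by the same continuity/surjectivity argument. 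Assertion (4) is immediate from $F_K^d(t)=\gamma([0,1/\varphi(t)])$ (again Theorem \ref{thm:level.sets.williamson}) combined with strict monotonicity of $t\mapsto 1/\varphi(t)$ and density of the atoms of $\gamma$. Full support in (1) is then settled exactly as at the end of the proof of Theorem \ref{thm:arch.cop.full.supp}: strictness gives $c^{1:d-1}>0$ $\lambda_{d-1}$-almost everywhere on $(0,1)^{d-1}$, density of the atoms of $K_C(\mathbf{x},\cdot)$ in $[0,1]$ gives $K_C(\mathbf{x},U_d)>0$ for $\mu_{C^{1:d-1}}$-almost every $\mathbf{x}$ in any non-empty open rectangle $U=U_1\times\cdots\times U_d\subseteq(0,1)^d$, and disintegration finishes the argument.

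The only genuinely technical point, and in my view the main obstacle, is verifying the existence of the normalisation constant $a>0$: one has to show that the map
\[
a\;\longmapsto\;\int_\mathbb{I}(1-t)^{d-1}\,\mathrm{d}\beta^{T_a}(t)=\sum_{i:\,aq_i\leq 1}2^{-i}(1-aq_i)^{d-1}
\]
is continuous, strictly decreasing from $1$ at $a=0^+$ to $0$ as $a\to\infty$, so that it attains the value $\tfrac{1}{2}$. Continuity in $a$ follows by dominated convergence using the summable majorant $\sum 2^{-i}$, monotonicity is immediate termwise, and the asymptotics follow from a straightforward tail-plus-pointwise argument after truncating the sum at some large index. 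This is the natural discrete counterpart of Lemma \ref{lem:appb1}.
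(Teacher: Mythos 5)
Your proposal is correct and follows exactly the construction the paper intends: the paper itself only gives a one-sentence sketch (take $\beta=\sum_i 2^{-i}\delta_{q_i}$ and proceed as in Theorem \ref{thm:arch.cop.full.supp}), and your writeup is a faithful, correctly detailed instantiation of that sketch, including the normalisation via the rescaling $T_a$ (which is Lemma \ref{lem:appb1} applied to the single measure $\beta$, since $a\mapsto\int(1-ta)_+^{d-1}\mathrm{d}\beta(t)=\psi_\beta(a)$ is itself a strict Archimedean generator). All the citations you invoke (Lemma \ref{lem:measure.strict.cop}, Theorem \ref{thm:abs.cont.sing.dis}(2), Theorem \ref{thm:level.sets.williamson}, and the full-support argument from Theorem \ref{thm:arch.cop.full.supp}) do the work you claim they do.
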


\appendix

\section{Level set mass and Kendall distribution function: Calculations}

Recall from Section \ref{sec:3:mass:distri} that the $t$-level hypersurfaces $f^t$ are defined on the upper 
$t$-cuts $[C^{1:d-1}]_t$ of the $(d-1)$-marginal. Using the notation $\mathbf{x}_m = (x_1,x_2,\ldots,x_m)$, $m\in\mathbb{N}$, for $\mathbf{x}\in [C^{1:d-1}]_t$ 
we have $x_1\geq t, x_2\geq \psi(\varphi(t)-\varphi(x_1)) =: f^t(x_1), x_3 \geq \psi\left( \varphi(t) - \varphi(x_1) - \varphi(x_2) \right) =: f^t(\mathbf{x}_2)$ and iteratively, 
$$x_{d-1}\geq \psi\big( \varphi(t) - \sum_{i=1}^{d-2}\varphi(x_i) \big) =: f^t(\mathbf{x}_{d-2}).$$

\begin{Prop}\label{prop:level:surfaces}
	Suppose that $C\in\mathcal{C}_\text{ar}^d$ has generator $\psi$ and let $\mu_C$ denote the corresponding $d$-stochastic measure. Then for every $t>0$ we have
	\begin{eqnarray}
		\mu_C(L_t) = \frac{(-\varphi(t))^{d-1}}{(d-1)!} \cdot \big( D^-\psi^{(d-2)}(\varphi(t)) - D^-\psi^{(d-2)}(\varphi(t-)) \big).
	\end{eqnarray}
	If $C$ is strict then $\mu_C(L_0) = 0$ and for non-strict $C$,
	\begin{eqnarray}
		\mu_C(L_0) &= \frac{(-\varphi(0))^{d-1}}{(d-1)!} \cdot D^-\psi^{(d-2)}(\varphi(0)).
	\end{eqnarray}
\end{Prop}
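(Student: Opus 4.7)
The plan is to apply the disintegration formula $\mu_C(L_t) = \int_{\mathbb{I}^{d-1}} K_C(\mathbf{x}, (L_t)_\mathbf{x}) \, d\mu_{C^{1:d-1}}(\mathbf{x})$ with the explicit Markov kernel from Theorem~\ref{th:multi:kernel}, and to then exploit a key cancellation between the kernel denominator and the density of the Archimedean marginal $C^{1:d-1}$.

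First I would identify the $\mathbf{x}$-section of $L_t$ for $t \in (0,1]$. Since $\varphi$ is continuous and strictly decreasing on $(0,1]$, for every $\mathbf{x}$ with $\sum_{i=1}^{d-1} \varphi(x_i) < \varphi(t)$ there is a unique $y^\ast(\mathbf{x}) := \psi\big(\varphi(t) - \sum_{i=1}^{d-1} \varphi(x_i)\big) \in (0,1)$ with $(L_t)_{\mathbf{x}} = \{y^\ast(\mathbf{x})\}$, while the boundary $L_t^{1:d-1}$ and the set $L_0^{1:d-1}$ are $\mu_{C^{1:d-1}}$-null by absolute continuity of the marginal. For such $\mathbf{x}$ the kernel mass at $y^\ast(\mathbf{x})$ equals the jump of $y \mapsto K_C(\mathbf{x},[0,y])$ there, and combining continuity and strict monotonicity of $\varphi$ with the left-continuity of $D^-\psi^{(d-2)}$ the third case in eq.~\eqref{eq:mk1} gives
\begin{align*}
K_C(\mathbf{x},\{y^\ast(\mathbf{x})\}) = \frac{D^-\psi^{(d-2)}(\varphi(t)) - D^-\psi^{(d-2)}(\varphi(t-))}{D^-\psi^{(d-2)}\big(\sum_{i=1}^{d-1}\varphi(x_i)\big)},
\end{align*}
where $D^-\psi^{(d-2)}(\varphi(t-))$ is interpreted as the right limit of $D^-\psi^{(d-2)}$ at $\varphi(t)$, which arises because $\varphi$ is decreasing, so $y \uparrow y^\ast$ forces the argument of $D^-\psi^{(d-2)}$ down to $\varphi(t)$ from the right.

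Next I would plug this together with the explicit marginal density $c^{1:d-1}(\mathbf{x}) = \prod_{i=1}^{d-1}\varphi'(x_i) \cdot D^-\psi^{(d-2)}\big(\sum_{i=1}^{d-1}\varphi(x_i)\big)$ (obtained by applying eq.~\eqref{eq:arch:density} to the $(d-1)$-dimensional Archimedean marginal) into the disintegration integral. The crucial observation is that the factor $D^-\psi^{(d-2)}(\sum_i\varphi(x_i))$ cancels between kernel denominator and density, leaving
\begin{align*}
\mu_C(L_t) = \big(D^-\psi^{(d-2)}(\varphi(t)) - D^-\psi^{(d-2)}(\varphi(t-))\big) \int_{[C^{1:d-1}]_t} \prod_{i=1}^{d-1}\varphi'(x_i) \, d\lambda_{d-1}(\mathbf{x}).
\end{align*}
The remaining integral I would evaluate via the change of variables $u_i = \varphi(x_i)$, under which $[C^{1:d-1}]_t$ is mapped bijectively onto the simplex $S_t := \{\mathbf{u} \in [0,\varphi(t)]^{d-1} : \sum u_i \leq \varphi(t)\}$. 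Each of the $d-1$ coordinate changes reverses orientation, so the integral equals $(-1)^{d-1}\lambda_{d-1}(S_t) = (-1)^{d-1}\varphi(t)^{d-1}/(d-1)! = (-\varphi(t))^{d-1}/(d-1)!$, yielding the claim for $t \in (0,1]$.

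Finally, the $t=0$ cases follow by adapting the recipe. In the strict case $\varphi(0)=\infty$ together with $\lim_{z \to \infty} D^-\psi^{(d-2)}(z)=0$ and the fact that $(L_0)_\mathbf{x} \subseteq \{0\}$ for $\mathbf{x}\notin L_0^{1:d-1}$ immediately forces $\mu_C(L_0)=0$. In the non-strict case $(L_0)_\mathbf{x}=[0,f^0(\mathbf{x})]$ for $\mathbf{x}\notin L_0^{1:d-1}$ and evaluating the kernel at $y=f^0(\mathbf{x})$ (using $\varphi(f^0(\mathbf{x}))=\varphi(0)-\sum_i\varphi(x_i)$) yields $K_C(\mathbf{x},(L_0)_\mathbf{x})=D^-\psi^{(d-2)}(\varphi(0))/D^-\psi^{(d-2)}(\sum_i\varphi(x_i))$; after the same density cancellation the simplex has side length $\varphi(0)$ and the result follows. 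The main obstacle I anticipate is the careful interpretation of the notation $\varphi(t-)$ (as the right limit of the argument of $D^-\psi^{(d-2)}$ at $\varphi(t)$, not a left limit of $\varphi$) together with the sign bookkeeping in the multi-dimensional change of variables; once those are set up, the argument is essentially mechanical.
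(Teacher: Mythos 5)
Your proposal is correct and follows essentially the same route as the paper's proof in Appendix A: disintegration with the kernel of Theorem \ref{th:multi:kernel}, identification of the conditional point mass at the level surface as the jump of $D^-\psi^{(d-2)}$ at $\varphi(t)$ (with the correct reading of $\varphi(t-)$ as a right-hand limit in the argument of $D^-\psi^{(d-2)}$), cancellation against the marginal density, and evaluation of $\int_{[C^{1:d-1}]_t}\prod_i \varphi'(x_i)\,\mathrm{d}\lambda_{d-1}$. The only (cosmetic) difference is that you compute this last integral as a signed simplex volume via the substitution $u_i=\varphi(x_i)$, whereas the paper performs the equivalent iterated integration coordinate by coordinate.
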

\begin{proof}
	We start with $t>0$. Using disintegration, the definition of $f^t$ and the fact that 
	$C^{1:d-1}$ is absolutely continuous with density $c^{1:d-1}$ we get
	\begin{align*}
				\mu_C(L_t) &= \int_{\mathbb{I}^{d-1}} K_C(\mathbf{s}, (L_t)_\mathbf{s}) \ \mathrm{d}\mu_{C^{1:d-1}}(\mathbf{s}) \\
				&=\int_{[t,1]\times [f^t(s_1),1] \times \ldots \times [f^t(\mathbf{s}_{d-2}),1]} K_C(\mathbf{s}, \{f^t(\mathbf{s})\}) \ \mathrm{d}\mu_{C^{1:d-1}}(\mathbf{s}) \\
				&= \int_{[t,1]} \cdots \int_{[f^t(\mathbf{s}_{d-2}),1]} \prod_{i=1}^{d-1} \varphi'(s_i) \cdot \left[ D^-\psi^{(d-2)}(\varphi(t)) - D^-\psi^{(d-2)}(\varphi(t-)) \right] \ \mathrm{d}\lambda(\mathbf{s}) \\
				&= \left[ D^-\psi^{(d-2)}(\varphi(t)) - D^-\psi^{(d-2)}(\varphi(t-)) \right] \cdot \int_{[t,1]}  \cdots \int_{[f^t(\mathbf{s}_{d-2}),1]} \prod_{i=1}^{d-1} \varphi'(s_i) \ \mathrm{d}\lambda(\mathbf{s}).
	\end{align*}
	\noindent Letting $(II)$ denote the iterated integrals in the previous line we have
	\begin{align*}
		(II) = \int\limits_{[t,1]} \int\limits_{[f^t(s_1),1]} \cdots \prod\limits_{i=1}^{d-2} \varphi'(s_i) \int\limits_{[f^t(\mathbf{s}_{d-2}),1]} \varphi'(s_{d-1}) (-1)^0 \left[ \varphi(t) - \sum\limits_{i=1}^{d-1-0}\varphi(s_i) \right]^0 \ \mathrm{d}\lambda(\mathbf{s})
	\end{align*}
	and the chain rule directly yields
	\begin{align*}
		(II) &= \int_{[t,1]} \cdots \int_{[f^t(\mathbf{s}_{d-4}),1]} \prod\limits_{i=1}^{d-3} \varphi'(s_i) \\
		&\qquad \int_{[f^t(\mathbf{s}_{d-3}),1]} \varphi'(s_{d-2}) \cdot \frac{(-1)^1}{1} \cdot \left[ \varphi(t) - \sum_{i=1}^{d-1-1} \varphi(s_i) \right]^1 \ \mathrm{d}\lambda(s_{d-2}) \mathrm{d}\lambda(\mathbf{s}_{d-3}).
	\end{align*}
	Proceeding analogously for $s_{d-2}$ gives
	\begin{align*}
		(II) &= \int_{[t,1]} \cdots \int_{[f^t(\mathbf{s}_{d-4}),1]} \prod_{i=1}^{d-3} \varphi'(s_i) \cdot \frac{(-1)^2}{1\cdot 2} \cdot \left[ \varphi(t) - \sum_{i=1}^{d-1-2} \varphi(s_i) \right]^2\ \mathrm{d}\lambda(\mathbf{s}_{d-3})
	\end{align*} 
	and after finitely many steps we obtain
	\begin{align*}
		(II) &= \int_{[t, 1]} \varphi'(s_1) \cdot \frac{(-1)^{d-2}}{1\cdot 2 \cdots (d-2)} \left[ \varphi(t) - \varphi(s_1) \right]^{d-2} \ \mathrm{d}\lambda(s_1) = \frac{(-1)^{d-1}}{(d-1)!} \cdot \varphi(t)^{d-1}
	\end{align*}
	as desired. For $t=0$ and strict $C$ we obviously have $\mu_C(L_0) = 0$. For non-strict $C$ we have $K_C(\mathbf{s}, \{ f^0(\mathbf{s}) \}) = K_C(\mathbf{s}, [0, f^0(\mathbf{s})])$ and calculations as those above yield the result.
\end{proof}

\begin{Prop}\label{prop:multivariate:kendall}
	Suppose that $C\in\mathcal{C}_\text{ar}^d$ has generator $\psi$. Then for $t>0$
	\begin{eqnarray}
		F_K^d(t) = D^-\psi^{(d-2)}(\varphi(t))  \frac{(-1)^{d-1}}{(d-1)!} \varphi(t)^{d-1} + \sum_{k=0}^{d-2} \psi^{(k)}(\varphi(t)) \frac{(-1)^k}{k!}\varphi(t)^k.
	\end{eqnarray}
	holds. For $t=0$ and strict $C$ we have $F_K^d(0) = 0$ and for non-strict $C$,
	\begin{eqnarray}
		F_K^d(0) = D^-\psi^{(d-2)}(\varphi(0)) \cdot \frac{(-1)^{d-1}}{(d-1)!}\cdot \varphi(0)^{d-1}.
	\end{eqnarray}
\end{Prop}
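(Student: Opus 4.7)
The strategy is to disintegrate $F_K^d(t) = \mu_C(\{(\mathbf{x},y) \in \mathbb{I}^d : C(\mathbf{x},y) \leq t\})$ against $\mu_{C^{1:d-1}}$ using the Markov kernel from Theorem \ref{th:multi:kernel}, splitting the integration over $\mathbb{I}^{d-1}$ according to whether $\mathbf{x}$ lies in the upper $t$-cut $[C^{1:d-1}]_t$ or not. Because $C(\mathbf{x},y) \leq C^{1:d-1}(\mathbf{x})$, the section $\{y \in \mathbb{I} : C(\mathbf{x},y) \leq t\}$ equals $\mathbb{I}$ whenever $C^{1:d-1}(\mathbf{x}) < t$, while on $[C^{1:d-1}]_t$ the strict monotonicity of $\varphi$ turns $C(\mathbf{x},y) \leq t$ into $y \leq f^t(\mathbf{x})$. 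Since absolute continuity of $C^{1:d-1}$ renders $\{C^{1:d-1} = t\}$ a $\mu_{C^{1:d-1}}$-null set, disintegration gives
\begin{align*}
F_K^d(t) = \mu_{C^{1:d-1}}\bigl(\{C^{1:d-1} < t\}\bigr) + \int_{[C^{1:d-1}]_t} K_C\bigl(\mathbf{x},[0,f^t(\mathbf{x})]\bigr)\, \mathrm{d}\mu_{C^{1:d-1}}(\mathbf{x}).
\end{align*}

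For the second summand I would plug $y = f^t(\mathbf{x})$ into Theorem \ref{th:multi:kernel}. The argument in the numerator collapses to the constant $\varphi(t)$, and the denominator $D^-\psi^{(d-2)}(\sum_i \varphi(x_i))$ cancels exactly against the corresponding factor in the density $c^{1:d-1}(\mathbf{x}) = \prod_{i=1}^{d-1}\varphi'(x_i)\, D^-\psi^{(d-2)}(\sum_i \varphi(x_i))$. What survives is the constant $D^-\psi^{(d-2)}(\varphi(t))$ multiplied by the integral $\int_{[C^{1:d-1}]_t} \prod_i \varphi'(x_i)\, \mathrm{d}\lambda_{d-1}(\mathbf{x})$, which is precisely the iterated integral already evaluated inside the proof of Proposition \ref{prop:level:surfaces} and equals $\frac{(-1)^{d-1}}{(d-1)!}\varphi(t)^{d-1}$. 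This yields the $D^-\psi^{(d-2)}(\varphi(t))$-term of the claimed expression.

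For the first summand I would write $\mu_{C^{1:d-1}}(\{C^{1:d-1} < t\}) = 1 - \mu_{C^{1:d-1}}([C^{1:d-1}]_t)$ and compute the latter mass via the density by integrating one coordinate at a time. Each step rests on the antiderivative identities $\int_a^b D^-\psi^{(m)}(s)\, \mathrm{d}s = \psi^{(m)}(b) - \psi^{(m)}(a)$ and $\int_a^b \psi^{(m)}(s)\, \mathrm{d}s = \psi^{(m-1)}(b) - \psi^{(m-1)}(a)$ (both direct consequences of eq.~\eqref{eq:up}) combined with the substitution $u = \sum_{i=1}^{j-1}\varphi(x_i) + \varphi(x_j)$. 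Integrating out $x_{d-1}$ replaces the $D^-\psi^{(d-2)}$-integrand by the difference $\psi^{(d-2)}(\sum_{i=1}^{d-2}\varphi(x_i)) - \psi^{(d-2)}(\varphi(t))$; the constant piece, together with the auxiliary integral of $\prod\varphi'$ from Proposition \ref{prop:level:surfaces}, contributes exactly $\psi^{(d-2)}(\varphi(t))\frac{(-1)^{d-2}}{(d-2)!}\varphi(t)^{d-2}$, while the other piece has the same shape with $d-2$ lowered to $d-3$. Iterating the reduction down to the base case $\int_t^1 \varphi'(x_1)\psi'(\varphi(x_1))\, \mathrm{d}x_1 = \psi(\varphi(1)) - \psi(\varphi(t)) = 1-t$ produces a telescoping cascade and yields $\mu_{C^{1:d-1}}([C^{1:d-1}]_t) = 1 - \sum_{k=0}^{d-2}\psi^{(k)}(\varphi(t))\frac{(-1)^k}{k!}\varphi(t)^k$, so its complement delivers the polynomial summand in the target formula.

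The case $t=0$ is handled separately: for strict $C$ one has $\varphi(0)=\infty$ and $\mu_C(\{C=0\})=0$ by absolute continuity of $C^{1:d-1}$; for non-strict $C$ the claim is just eq.~\eqref{eq:levelset:mass_zero} re-expressed via $F_K^d(0)=\mu_C(L_0)$, since $\psi^{(k)}(\varphi(0))=0$ for $k=0,\ldots,d-2$ (as $\psi$ vanishes identically beyond $\varphi(0)$). The main obstacle I anticipate is the iterative integration: keeping the signs straight across the cascade (in particular the alternating $(-1)^k$ arising from the decreasing pseudo-inverse together with the sign of $D^-\psi^{(m)}$), and identifying at each stage which piece telescopes into the polynomial sum while the other is pushed into the next reduction step, requires careful bookkeeping and verification that the base case correctly initialises the telescoping.
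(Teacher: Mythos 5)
Your proposal is correct and follows essentially the same route as the paper: disintegrate $F_K^d(t)$ via the Markov kernel of Theorem \ref{th:multi:kernel}, split $\mathbb{I}^{d-1}$ into the upper $t$-cut and its complement, obtain the $D^-\psi^{(d-2)}(\varphi(t))\frac{(-1)^{d-1}}{(d-1)!}\varphi(t)^{d-1}$ term from the kernel evaluated at $f^t(\mathbf{x})$ together with the iterated integral of $\prod_i\varphi'(x_i)$ already computed in Proposition \ref{prop:level:surfaces}, and recover the polynomial sum from the mass of the complement. The only (cosmetic) difference is that the paper identifies that complementary mass as $F_K^{d-1}(t)$ and recurses on the dimension, whereas you unwind the same recursion as an explicit telescoping iterated integral; your sign bookkeeping and the $t=0$ cases are handled correctly.
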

\begin{proof}
	Applying disintegration and decomposing $\mathbb{I}^{d-1} = [C^{1:d-1}]_t \cup [C^{1:d-1}]_t^c$ yields 
	\begin{align*}
		F_K^d(t) &= \mu_C([C]_t^c) = \int_{\mathbb{I}^{d-1}} K_C\left(\mathbf{x}, ([C]_t^c)_\mathbf{x}\right) \ \mathrm{d}\mu_{C^{1:d-1}}(\mathbf{x}) \\
		&= \int_{[C^{1:d-1}]_t} K_C\left(\mathbf{x}, ([C]_t^c)_\mathbf{x}\right) \ \mathrm{d}\mu_{C^{1:d-1}}(\mathbf{x})  + \int_{[C^{1:d-1}]_t^c}K_C\left(\mathbf{x}, ([C]_t^c)_\mathbf{x}\right) \ \mathrm{d}\mu_{C^{1:d-1}}(\mathbf{x})
	\end{align*}
	Denoting by $(III)$ and $(IV)$ the first and the second of the above summands, respectively, analogously as in Proposition \ref{prop:level:surfaces} we obtain
	\begin{align*}
		(III) = D^-\psi^{(d-2)}(\varphi(t))\cdot \frac{(-1)^{d-1}}{(d-1)!}\varphi(t)^{d-1}.
	\end{align*}
	Regarding $(IV)$, we have $\mathbf{x}\in [C^{1:d-1}]_t^c$ if, and only if, $([C]_t^c)_\mathbf{x} = \mathbb{I}$ and hence
	\begin{align*}
		(IV) &= \int_{[C^{1:d-1}]_t^c} 1 \ \mathrm{d}\mu_{C^{1:d-1}}(\mathbf{x}) = \mu_{C^{1:d-1}}([C^{1:d-1}]_t^c) = F_K^{d-1}(t).
	\end{align*}
	Proceeding iteratively finally yields
	\begin{align*}
		F_K^d(t) &= D^-\psi^{(d-2)}(\varphi(t)) \cdot \frac{(-1)^{d-1}}{(d-1)!}\cdot \varphi(t)^{d-1}  + \sum_{k=1}^{d-2} \psi^{(k)}(\varphi(t)) \frac{(-1)^k}{k!}\varphi(t)^k + t.
	\end{align*}
	For $t=0$ we have $
	F_K^d(t)= \int_{\{ \mathbf{s}\in\mathbb{I}^{d-1}: \ \sum_{i=1}^{d-1}\varphi(s_i)\leq \varphi(0) \}} K_C(\mathbf{x}, ([C]_0^c)_\mathbf{x}) \ \mathrm{d}\mu_{C^{1:d-1}}(\mathbf{x})
	$
	and the result follows in the same manner.
\end{proof}
\begin{Lemma}\label{convex.cont.conv}
	Suppose that $f,f_1,f_2,\ldots$ are convex functions such that $(f_n)_{n\in\mathbb{N}}$ converges 
	to $f$ pointwise on $(0,\infty)$. Then $\lim_{n\rightarrow\infty}D^{-}f_n(x) = D^{-}f(x)$ holds 
	for every $x\in \text{Cont}(D^{-}f)$. Moreover, the sequence $(D^{-}f_n)_{n\in\mathbb{N}}$ converges 
	continuously to $D^{-}f$ on $\text{Cont}(D^{-}f)$, i.e., for every sequence $(z_n)_{n \in \mathbb{N}}$ with 
	limit $z \in \text{Cont}(D^{-}f)$ we have 
	$$
	\lim_{n \rightarrow \infty} D^{-}f_n(z_n)=D^{-}f(z).
	$$
\end{Lemma}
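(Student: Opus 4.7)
The plan is to bracket $D^{-}f_n(x)$ by difference quotients of $f_n$, pass to the limit in $n$ for a fixed bracket width $h>0$, and then let $h\searrow 0$ to collapse the bracket at any continuity point of $D^{-}f$. The starting observation is the standard convexity sandwich: for every $h>0$ with $x-h, x+h \in (0,\infty)$,
$$\frac{f_n(x) - f_n(x-h)}{h} \;\leq\; D^{-}f_n(x) \;\leq\; D^{+}f_n(x) \;\leq\; \frac{f_n(x+h) - f_n(x)}{h},$$
and the same inequality with $f$ in place of $f_n$. At any $x \in \mathrm{Cont}(D^{-}f)$ we additionally have $D^{-}f(x)=D^{+}f(x)$ since the left-continuous, non-decreasing function $D^{-}f$ agrees with $D^{+}f$ precisely at its continuity points.

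For the first assertion I would fix $x \in \mathrm{Cont}(D^{-}f)$ and $h>0$ (small enough that $x\pm h \in (0,\infty)$). Pointwise convergence $f_n\to f$ yields convergence of both outer difference quotients to their $f$-counterparts, so applying $\liminf_n$ and $\limsup_n$ to the sandwich gives
$$\frac{f(x) - f(x-h)}{h} \;\leq\; \liminf_n D^{-}f_n(x) \;\leq\; \limsup_n D^{-}f_n(x) \;\leq\; \frac{f(x+h) - f(x)}{h}.$$
Letting $h\searrow 0$ both outer quantities converge to $D^{-}f(x)=D^{+}f(x)$, proving $\lim_n D^{-}f_n(x) = D^{-}f(x)$.

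For the continuous convergence claim, the sandwich argument is formally identical once we know that $f_n(z_n) \to f(z)$ and $f_n(z_n \pm h)\to f(z\pm h)$ for any fixed admissible $h$. This is where the key input enters: pointwise convergence of convex functions on an open interval improves automatically to uniform convergence on compact subsets (see \cite{rockafellar}). Applied on a compact neighborhood of $z$, this gives $f_n(z_n)\to f(z)$ and $f_n(z_n\pm h)\to f(z\pm h)$, hence
$$\frac{f(z)-f(z-h)}{h}\;\leq\;\liminf_n D^{-}f_n(z_n)\;\leq\;\limsup_n D^{-}f_n(z_n)\;\leq\;\frac{f(z+h)-f(z)}{h},$$
and letting $h\searrow 0$ collapses the bounds to $D^{-}f(z)=D^{+}f(z)$.

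The only real obstacle is precisely this upgrade from pointwise to locally uniform convergence needed in the second part; once invoked, both parts follow from a single convexity-based sandwich. All other steps are routine limit manipulations and carry no surprises.
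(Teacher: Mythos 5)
Your proof is correct. The first assertion is proved exactly as in the paper: the difference-quotient sandwich $\frac{f_n(x)-f_n(x-h)}{h}\leq D^-f_n(x)\leq D^+f_n(x)\leq \frac{f_n(x+h)-f_n(x)}{h}$, passing to $\liminf/\limsup$ in $n$ and then letting $h\searrow 0$, using $D^-f(x)=D^+f(x)$ at continuity points. For the continuous-convergence part the two arguments genuinely diverge. The paper stays entirely elementary: it exploits monotonicity of $x\mapsto D^-f_n(x)$ to bracket $D^-f_n(z_n)$ between $D^-f_n(z-a_k)$ and $D^-f_n(z+b_k)$ for suitably chosen continuity points $z\pm a_k, z\pm b_k$ of $D^-f$ (which exist by density, since $D^-f$ has at most countably many discontinuities), applies the already-proved first assertion at those points, and then sends $k\to\infty$ using continuity of $D^-f$ at $z$. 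You instead rerun the difference-quotient sandwich at the moving point $z_n$, which requires $f_n(z_n\pm h)\to f(z\pm h)$; this you obtain from the classical upgrade of pointwise to locally uniform convergence for convex functions (Rockafellar, Theorem 10.8) combined with continuity of $f$. Both routes are valid; the paper's is self-contained modulo basic properties of one-sided derivatives of convex functions, while yours imports a stronger external theorem but in exchange treats both assertions with a single uniform mechanism (the first assertion being the constant-sequence special case). No gaps.
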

\begin{proof}
	Convexity implies that for every $h>0$ we have
	\begin{align*}
		\limsup_{n \rightarrow \infty}D^{+}f_n(x) \leq \limsup_{n \rightarrow \infty} 
		\frac{f_n(x+h)- f_n(x)}{h} = 
		\frac{f(x+h)-f(x)}{h},
	\end{align*}
	which, considering $h\downarrow 0$, yields $\limsup_{n \rightarrow \infty}D^+f_n(x) \leq D^+f(x)$. 
	The inequality $\liminf_{n\rightarrow \infty} D^-f_n(x) \geq D^-f(x)$ follows in the same manner, so
	altogether we get
	\begin{align*}
		D^-f(x) \leq \liminf_{n\rightarrow \infty} D^-f_n(x) \leq \limsup_{n \rightarrow \infty} 
		D^+f_n(x) \leq D^+f(x)
	\end{align*}
	from which the fist assertion follows since for $x\in \text{Cont}(D^-f)$ we have 
	$D^+f(x) = D^-f(x)$.
	
	For the second part let $(z_n)_{n\in\mathbb{N}}$ be a sequence in $(0,\infty)$ converging to some point
	$z\in \text{Cont}(D^-f) \subseteq(0,\infty)$. 
	We can find two strictly decreasing sequences 
	$(a_k)_{k\in\mathbb{N}}, (b_k)_{k\in\mathbb{N}}$ in $(0,\infty)$ 
	converging to $0$ with $z-a_k, z+b_k \in \text{Cont}(D^-f)$ for every 
	$k\in\mathbb{N}$. 
	Fix $k \in \mathbb{N}$. Then there exists some index $n_0 \in \mathbb{N}$ such that for all $n\geq n_0$ 
	we have that $z-a_k < z_n < z+b_k$ holds. Monotonicity of $D^-f_n$ implies 
	\begin{align*}
		D^-f_n(z-a_k) \leq D^-f_n(z_n) \leq D^-f_n(z+b_k)
	\end{align*} 
	for every such $n\geq n_0$. Having this we get 
	\begin{align*}
		\lim_{n \rightarrow \infty} D^-f_n(z-a_k) &\leq \liminf_{n \rightarrow \infty} D^-f_n(z_n)  \\ &\leq  \limsup_{n \rightarrow \infty} D^-f_n(z_n) \leq \lim_{n \rightarrow \infty}  D^-f_n(z+b_k),
	\end{align*}
	from which the result follows since $z \in \text{Cont}(D^-f)$ and therefore
	$$
		\lim_{k \rightarrow \infty} \lim_{n \rightarrow \infty}  D^-f_n(z-a_k) = D^-f(z)=
		\lim_{k \rightarrow \infty} \lim_{n \rightarrow \infty}  D^-f_n(z+b_k)
	$$
	holds.
\end{proof}
\section{Approximations by discrete, absolutely continuous and singular Williamson measures}
We now tackle Theorem \ref{thm:approx.scheme} already used in the proof of Corollary \ref{cor:dense} and start with the following simple lemma simplifying the approximation procedure. Thereby, for every $a \in (0,\infty)$
we will let $T_a:(0,\infty) \rightarrow (0,\infty)$ denote the linear transformation $T_a(x)=ax$.
\begin{Lemma}\label{lem:appb1}
Suppose that $\gamma \in \mathcal{P}_{\mathcal{W}_d}$, that $(\beta_n)_ {n \in \mathbb{N}}$ is a sequence 
of probability measures on $\mathcal{B}([0,\infty)$ satisfying $\beta_n(\{0\})=0$ for every $n \in \mathbb{N}$
but not necessarily fulfilling eq. (\ref{eq:cond_meas}), and that $(\beta_n)_ {n \in \mathbb{N}}$ converges
weakly to $\gamma$. Then there exists a sequence $(a_n)_{n \in \mathbb{N}}$
in $(0,\infty)$ converging to $\frac{1}{2}$ such that the following properties hold:
 \begin{itemize}
 \item Each probability measure $\gamma_n:=\beta_n^{T_{a_n}}, n \in \mathbb{N}$,
  fulfills $\gamma_n \in \mathcal{P}_{\mathcal{W}_d}$.
  \item  $(\gamma_n)_ {n \in \mathbb{N}}$ converges weakly on $[0,\infty)$ to $\gamma$.
 \end{itemize}
\end{Lemma}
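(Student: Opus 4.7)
The plan is to choose $a_n$ via an intermediate value argument so that the rescaled measure $\gamma_n = \beta_n^{T_{a_n}}$ already satisfies the Williamson normalization, and then transfer weak convergence $\beta_n \to \gamma$ to $\gamma_n \to \gamma$ by showing that $a_n$ approaches the analogous normalization constant $a^*$ attached to $\gamma$. Concretely, introduce the auxiliary functions
\[
g_n(a) := \int_{[0,\infty)} (1-as)_+^{d-1}\,\mathrm{d}\beta_n(s), \qquad g(a) := \int_{[0,\infty)} (1-as)_+^{d-1}\,\mathrm{d}\gamma(s) = \psi_\gamma(a),
\]
and observe via change of variables that $g_n(a) = \int_{\mathbb{I}}(1-t)^{d-1} \,\mathrm{d}\beta_n^{T_a}(t)$, so that $\beta_n^{T_a} \in \mathcal{P}_{\mathcal{W}_d}$ is equivalent to $g_n(a) = \tfrac{1}{2}$. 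Dominated convergence shows that each $g_n$ is continuous and non-increasing on $[0,\infty)$ with $g_n(0)=1$ and, using $\beta_n(\{0\})=0$, $\lim_{a\to\infty}g_n(a)=0$. The intermediate value theorem therefore yields some $a_n\in(0,\infty)$ with $g_n(a_n)=\tfrac{1}{2}$, and setting $\gamma_n := \beta_n^{T_{a_n}}$ gives $\gamma_n(\{0\}) = \beta_n(T_{a_n}^{-1}\{0\}) = \beta_n(\{0\}) = 0$ together with the required normalization, so $\gamma_n \in \mathcal{P}_{\mathcal{W}_d}$.

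To identify the limit of $(a_n)_{n\in\mathbb{N}}$, note that for every fixed $a \geq 0$ the integrand $s\mapsto(1-as)_+^{d-1}$ is bounded and continuous, so the weak convergence $\beta_n\to\gamma$ gives pointwise convergence $g_n\to g$ on $[0,\infty)$. Since all these functions are continuous and non-increasing with common endpoint values $1$ at $0$ and $0$ at $\infty$ (the latter using $\gamma(\{0\})=0$), a Pólya-type argument promotes this to uniform convergence on $[0,\infty)$. Let $a^*$ be the point at which $g(a^*) = \tfrac{1}{2}$; since $g=\psi_\gamma$ is an Archimedean generator, it is strictly decreasing on the set where it takes values in $(0,1)$, so $a^*$ is unique and $g$ is strictly decreasing in a neighbourhood of it. Together with $g_n(a_n) = \tfrac{1}{2} = g(a^*)$ and uniform convergence, this standard inversion argument forces $a_n \to a^*$.

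Finally, to deduce weak convergence $\gamma_n\to\gamma$, for arbitrary bounded continuous $f$ on $[0,\infty)$ decompose
\[
\int f\,\mathrm{d}\gamma_n - \int f\,\mathrm{d}\gamma
= \int[f(a_ns)-f(a^*s)]\,\mathrm{d}\beta_n(s) + \Bigl(\int f(a^*s)\,\mathrm{d}\beta_n(s) - \int f(a^*s)\,\mathrm{d}\gamma(s)\Bigr).
\]
The second bracket vanishes because $s\mapsto f(a^*s)$ is bounded continuous and $\beta_n\to\gamma$ weakly. The first bracket is controlled by tightness of $(\beta_n)_{n\in\mathbb{N}}$ (automatic from its weak convergence on the Polish space $[0,\infty)$) combined with uniform continuity of $f$ on compact sets and $a_n\to a^*$, via a standard $\varepsilon/3$ argument. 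I expect the main obstacle to be the promotion of pointwise to uniform convergence of $g_n\to g$, because this is precisely what allows the fixed normalization level $\tfrac{1}{2}$ to be transferred through the limit and $a_n$ to be identified with a convergent sequence; once this is in hand, both the choice of $a_n$ and the subsequent weak convergence $\gamma_n\to\gamma$ reduce to routine manipulations.
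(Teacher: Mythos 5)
Your proposal is correct and follows essentially the same route as the paper: define $a_n$ as the normalizing constant of the Williamson transform $\psi_{\beta_n}=\mathcal{W}_d\beta_n$ (your $g_n$), show $a_n$ converges to the corresponding constant for $\gamma$, and transfer weak convergence through the rescaling. Two small points. First, you should make explicit that $a^*=1$: since $\gamma\in\mathcal{P}_{\mathcal{W}_d}$ we have $g(1)=\int_{\mathbb{I}}(1-t)^{d-1}\,\mathrm{d}\gamma(t)=\tfrac12$, and this is actually needed for your final decomposition to be an identity --- as written, the two sides differ by $\int f\,\mathrm{d}\gamma-\int f\,\mathrm{d}\gamma^{T_{a^*}}$, which vanishes only because $a^*=1$. (Relatedly, the value ``$\tfrac12$'' in the statement of the lemma is a typo; the paper's own proof, like yours, shows $a_n\to 1$.) Second, your concluding step differs mildly from the paper's: you prove $\gamma_n\to\gamma$ weakly by a direct test-function argument using tightness of $(\beta_n)_{n\in\mathbb{N}}$, uniform continuity on compacts and $a_n\to 1$, whereas the paper evaluates $\gamma_n([0,z])=\beta_n([0,z/a_n])$ via the explicit formula of Lemma \ref{lem:one_dim_measure} and the continuous convergence of the generator derivatives from Lemma \ref{lem:conv:derivatives}. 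Your version is more self-contained (it does not lean on those two lemmas); the paper's reuses machinery already established. Both are valid, and the rest of your argument (IVT for existence of $a_n$, $\gamma_n(\{0\})=0$, P\'olya-type uniform convergence of $g_n$ to $g$, and the inversion argument forcing $a_n\to a^*$) is sound.
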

\begin{proof}
Let $\psi_\gamma$ denote the normalized Archimedean generator corresponding to $\gamma$ and 
$\psi_{\beta_n}$ the (not necessarily normalized) generator induced via 
$\psi_{\beta_n}=\mathcal{W}_d(\beta_n)$. Then proceeding as in the first part of the proof of 
Theorem \ref{thm:weak.convergence.measures} it follows that $(\psi_{\beta_n})_{n \in \mathbb{N}}$
converges uniformly to $\psi_\gamma$. Letting $a_n$ denote the unique element in $(0,\infty)$ fulfilling
$\psi_{\beta_n}(a_n)=\frac{1}{2}$ using monotonicity of generators and the fact that $\psi_\gamma$ is normalized
it is straightforward to verify that $(a_n)_{n \in \mathbb{N}}$ converges to $1$. \\
The probability measure $\gamma_n:=\beta_n^{T_{a_n}}$ obviously fulfills $\gamma_n(\{0\})=0$, moreover
using change of coordinates yields 
\begin{align*}
\frac{1}{2} &= \psi_{\beta_n}(a_n) = \int_{(0,\infty)} (1-ta_n)_+^{d-1} \,\mathrm{d} \beta_n(t) =  
\int_{(0,\infty)} (1-T_{a_n}(t))_+^{d-1} \,\mathrm{d} \beta_n(t) \\
&= \int_{(0,\infty)} (1-s)_+^{d-1} \,\mathrm{d} \gamma_n(t) = \int_{(0,1)} (1-s)^{d-1} \,\mathrm{d} \gamma_n(t),
\end{align*}  
so $\gamma_n \in \mathcal{P}_{\mathcal{W}_d}$ and it remains to show weak convergence. 
Applying Lemma \ref{lem:one_dim_measure} and Lemma \ref{lem:conv:derivatives} (continuous convergence) yields 
\begin{align*}
\lim_{n\rightarrow \infty}\gamma_n([0,z]) &= \lim_{n\rightarrow \infty} \beta_n\left(\left[0,\frac{z}{a_n}\right]\right)
\\&= \lim_{n\rightarrow \infty}\sum_{k = 0}^{d-2}\frac{(-1)^k\psi_{\beta_n}^{(k)}(\tfrac{a_n}{z})}{k!}\left(\frac{a_n}{z}\right)^k + \frac{(-1)^{d-1}D^-\psi_{\beta_n}^{(d-2)}(\tfrac{a_n}{z})}{(d-1)!}\left(\frac{a_n}{z}\right)^{d-1} \\&=
\gamma([0,z])
\end{align*}
for every point $z \in (0,\infty)$ with $\gamma(\{z\})=0$ which completes the proof.
\end{proof}
  
\begin{theorem}\label{thm:approx.scheme}
Suppose that $\gamma \in \mathcal{P}_{\mathcal{W}_d}$. Then there exists a sequence $(\gamma_n^{1})_{n \in \mathbb{N}}$
of discrete measures in $\mathcal{P}_{\mathcal{W}_d}$, a sequence $(\gamma_n^2)_{n \in \mathbb{N}}$
of singular measures without point masses in $\mathcal{P}_{\mathcal{W}_d}$, and 
a sequence $(\gamma_n^3)_{n \in \mathbb{N}}$ of absolutely continuous measures $\mathcal{P}_{\mathcal{W}_d}$ 
that all converge weakly to $\gamma$ on $[0,\infty)$.
\end{theorem}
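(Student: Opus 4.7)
The plan is to invoke Lemma \ref{lem:appb1} to sidestep the normalization condition \eqref{eq:cond_meas}: it suffices to exhibit, for each $i\in\{1,2,3\}$, a sequence $(\beta_n^i)_{n\in\mathbb{N}}$ of probability measures on $\mathcal{B}([0,\infty))$ with $\beta_n^i(\{0\})=0$, of the respective regularity type (discrete, singular without atoms, absolutely continuous), converging weakly to $\gamma$. Lemma \ref{lem:appb1} then produces the desired Williamson measures $\gamma_n^i=(\beta_n^i)^{T_{a_n^i}}$; since the affine maps $T_{a_n^i}$ preserve absolute continuity, discreteness, singularity and absence of atoms, the required regularity is inherited by $\gamma_n^i$, and weak convergence is ensured by the lemma itself.

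For the discrete approximation, I would partition $(0,\infty)$ into the intervals $I_{n,k}:=[\tfrac{k-1}{n},\tfrac{k}{n})$, $k\geq 1$, and set $\beta_n^1:=\sum_{k\geq 1}\gamma(I_{n,k})\,\delta_{k/n}$. Weak convergence follows from a standard Riemann-sum estimate using tightness of $\{\gamma\}$ to handle the tail and uniform continuity of a bounded test function on a sufficiently large compact set; placing the atoms at the right endpoints of the $I_{n,k}$ guarantees $\beta_n^1(\{0\})=0$. For the absolutely continuous case, convolving $\gamma$ with the normalized Lebesgue measure $U_n$ on $(0,\tfrac{1}{n}]$ yields $\beta_n^3:=\gamma\ast U_n$, which is absolutely continuous (convolution with an absolutely continuous probability measure yields an absolutely continuous measure), carries no atom at $0$, and converges weakly to $\gamma$ because $U_n\Rightarrow \delta_0$.

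The remaining and most delicate step is the singular continuous construction. I would fix the classical ternary Cantor probability measure $\nu$ on $[0,1]$ (singular without atoms) and, for every $n,k\in\mathbb{N}$, denote by $\nu_{n,k}$ the push-forward of $\nu$ under the affine map $t\mapsto\tfrac{k-1}{n}+\tfrac{t}{n}$, so that $\nu_{n,k}$ is singular continuous, atom-free, and supported in $[\tfrac{k-1}{n},\tfrac{k}{n}]$. Then define
\[
\beta_n^2:=\sum_{k\geq 1}\gamma(I_{n,k})\,\nu_{n,k+1},
\]
where the index shift keeps every support inside $[\tfrac{1}{n},\infty)$, so $\beta_n^2(\{0\})=0$. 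Pairwise disjointness of the sets $\mathrm{supp}(\nu_{n,k+1})$ ensures that the countable convex combination is itself singular continuous (the countable union of $\lambda$-null supports remains $\lambda$-null) and atom-free. Weak convergence $\beta_n^2\Rightarrow\gamma$ is obtained by the same Riemann-sum argument as in the discrete case, using additionally that $\mathrm{diam}(\mathrm{supp}(\nu_{n,k+1}))\leq 1/n$, so each $\int f\,d\nu_{n,k+1}$ differs from $f(k/n)$ by at most the modulus of continuity of $f$ on a bounded interval.

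The main obstacle I anticipate is verifying that the singular-continuous mixture $\beta_n^2$ is genuinely singular (with no hidden absolutely continuous or discrete component) and atom-free while simultaneously converging weakly to $\gamma$. The disjointness-of-supports construction with Cantor-type building blocks handles both concerns cleanly, and convergence reduces to an $\varepsilon/3$ argument combining tightness of $\gamma$, uniform continuity of bounded test functions, and vanishing diameter of the supports.
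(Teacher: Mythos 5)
Your proposal is correct, and its overall architecture coincides with the paper's: both reduce the problem via Lemma \ref{lem:appb1} to exhibiting, for each regularity type, a sequence of probability measures $\beta_n$ on $[0,\infty)$ with $\beta_n(\{0\})=0$ converging weakly to $\gamma$, and both observe that the rescaling $T_{a_n}$ preserves discreteness, absolute continuity, singularity and atomlessness. Where you differ is in the construction of the $\beta_n$: the paper uses a single unified scheme, interpolating the distribution function $F$ of $\gamma$ at an increasing finite subset of a countable dense set of continuity points by rescaled copies of one fixed template function $f$ (on compact intervals) and $g$ (on the tail); choosing the template to be absolutely continuous, a step function, or singular continuous then yields all three cases at once, and weak convergence is immediate because $F_n=F$ on a dense set. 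You instead give three tailored constructions — binning with atoms at right endpoints, mollification by convolution with a uniform kernel, and a mixture of rescaled Cantor measures over the bins — each of which is standard and whose weak convergence follows from the routine $\varepsilon/3$ argument you sketch. Your route is more elementary and each piece is textbook; the paper's buys uniformity of the argument and an exact match of $F$ on a dense set. One small remark: the pairwise disjointness of the supports $\mathrm{supp}(\nu_{n,k+1})$ is not actually needed — any countable convex combination of singular (respectively atomless) measures is singular (respectively atomless), since the union of the carrying $\lambda$-null sets is still $\lambda$-null and $\beta_n^2(\{x\})=\sum_k\gamma(I_{n,k})\nu_{n,k+1}(\{x\})=0$ for every $x$ — so that part of your justification is redundant but harmless.
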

\begin{proof}
Let $F$ denote the distribution function corresponding to $\gamma$, i.e., $F(z)=\gamma((0,z])=\gamma([0,z])$
for every $z \in [0,\infty)$ and let $\mathcal{Q}:=\{q_0,q_1,q_2,\ldots\}$ denote a countably infinite 
subset of Cont$(F)$ which is dense in $[0,\infty)$. Without loss of generality we assume that $q_0=0$. 
Furthermore let the function $f: \mathbb{I} \rightarrow [0,1]$ be right-continuous, non-decreasing with
$f(0)=0, f(1)=1$ and $g: [0,\infty) \rightarrow \mathbb{I}$ be right-continuous, non-decreasing with
$g(0)=0, g(\infty)=1$. For every non-degenerated compact interval $[a,b] \subseteq [0,\infty)$ and compact 
interval $[c,d] \subseteq [0,1]$ define the rescaled version $f_{[a,b]}^{[c,d]}: [a,b] \rightarrow [c,d]$ of
$f$ to $[a,b], [c,d]$ by 
$$
f_{[a,b]}^{[c,d]}(x) = c + (d-c) f \left(\frac{x-a}{b-a} \right),
$$     
and for every interval $[a,\infty) \subseteq [0,\infty)$ and $[c,d] \subseteq \mathbb{I}$ define 
$g_{[a,\infty)}^{[c,d]}: [a,\infty) \rightarrow [c,d]$ of $g$ to $[a,\infty), [c,d]$ by  
$$
g_{[a,\infty)}^{[c,d]}(x) = c + (d-c) g(x-a).
$$  
Using this notation define the distribution function $F_1:[0,\infty) \rightarrow [0,1]$ by 
$$
F_1(x)=f_{[0,q_1]}^{[0,F(q_1)]}(x) \cdot \mathbf{1}_{[0,q_1]}(x) \, + \, g_{[q_1,\infty)}^{[F(q_1),1]}(x) 
\cdot \mathbf{1}_{(q_1,\infty)}(x)
$$
and notice that $F_1$ fulfills $F_1(q_i)=F(q_i)$ for $i \in \{0,1\}$. In the second step  
define the distribution function $F_2:[0,\infty) \rightarrow [0,1]$ by 
$$
F_2(x)=f_{[0,q^2_{(1)}]}^{[0,F(q^2_{(1)})]}(x) \cdot \mathbf{1}_{[0,q^2_{(1)}]}(x) \, + \, 
f_{[q^2_{(1)},q^2_{(2)}]}^{[F(q^2_{(1)}), F(q^2_{(2)})]}(x) \cdot \mathbf{1}_{(q^2_{(1)},q^2_{(2)}]}(x)
 \, + \, g_{[q^2_{(2)},\infty)}^{[F(q^2_{(2)}),1]}(x) \cdot \mathbf{1}_{(q^2_{(2)},\infty)}(x),
$$
whereby $0< q^2_{(1)} < q^2_{(2)}$ denotes the order statistics of $0,q_1,q_2$ (the exponent denotes the `sample size').
Obviously $F_2$ fulfills $F_2(q_i)=F(q_i)$ for $i \in \{0,1,2\}$. Proceeding analogously yields a
sequence $(F_n)_{n \in \mathbb{N}}$ of distribution functions on $[0,\infty)$ fulfilling that for every 
$i \in \mathbb{N}$ we have $F_n(q_i)=F(q_i)$ for every $n \geq i$. In other words, the sequence 
$(F_n)_{n \in \mathbb{N}}$ converges on a dense set to the distribution function $F$, implying that 
$(F_n)_{n \in \mathbb{N}}$ converges to $F$ weakly. \\
Notice that the just discussed construction works for arbitrary $f,g$ fulfilling the afore-mentioned requirements. 
If we chose both $f$ and $g$ absolutely continuous then obviously
each $F_n$ is absolutely continuous, if we chose both $f$ and $g$ as step functions then $F_n$ is a step function, and
if we choose $f,g$ to be continuous with $f'=0$ and $g'=0$ almost everywhere (one could use, for instance, the 
Cantor function or work with any other strictly increasing singular continuous distribution function, 
see \cite{hewitt1965}) then each $F_n$ is continuous and obviously fulfills $F_n'=0$ almost everywhere.
The desired result now follows by considering the probability measures $\beta_n$ corresponding
to $F_n$ and applying Lemma \ref{lem:appb1}.  
\end{proof}
\section*{Acknowledgements}\vspace*{-0.2cm}
	The first author gratefully acknowledges the financial support from Porsche Holding Austria and 
	Land Salzburg within the WISS 2025 project \textquoteleft KFZ' (P1900123).
	The second and the third author gratefully acknowledge the support of the WISS 2025 project \textquoteleft 
	IDA-lab Salzburg' (20204-WISS/225/197-2019 and 20102-F1901166-KZP).


\begin{thebibliography}{10}

\bibitem{Bacigal2017}
T.~Bacig{\'a}l and M.~\v{Z}d{\'i}malov{\'a}.
\newblock {C}onvergence of linear approximation of {A}rchimedean generator from
  {W}illiamson's transform in examples.
\newblock {\em Tatra Mountains Mathematical Publications}, 69(1):1--18, 2017.

\bibitem{convArch}
A.~Charpentier and J.~Segers.
\newblock Convergence of {A}rchimedean copulas.
\newblock {\em Statistics \& Probability Letters}, 78:412--419, 2008.

\bibitem{Principles}
F.~Durante and C.~Sempi.
\newblock {\em Principles of copula theory}.
\newblock Taylor \& Francis Group LLC, Boca Raton FL, 2016.

\bibitem{p21}
J.~Fern\'{a}ndez~S\'{a}nchez and W.~Trutschnig.
\newblock Singularity aspects of {A}rchimedean copulas.
\newblock {\em Journal of Mathematical Analysis and Applications},
  432:103--113, 2015.

\bibitem{Trutschnig2015}
J.~{Fern\'{a}ndez S\'{a}nchez} and W.~Trutschnig.
\newblock Singularity aspects of archimedean copulas.
\newblock {\em Journal of Mathematical Analysis and Applications},
  432(1):103--113, 2015.

\bibitem{question}
J.~{Fern\'{a}ndez S\'{a}nchez} and W.~Trutschnig.
\newblock A note on singularity of a recently introduced family of minkowski's
  question-mark functions.
\newblock {\em Comptes Rendus Mathematique}, 355(9):956--959, 2017.

\bibitem{GNZ}
C.~Genest, J.~Ne{\v{s}}lehov{\'a}, and J.~Ziegel.
\newblock {Inference in multivariate Archimedean copula models}.
\newblock {\em Test}, 20:223, 2011.

\bibitem{GeRi}
C.~Genest and L.-P. Rivest.
\newblock Statistical inference procedures for bivariate archimedean copulas.
\newblock {\em Journal of the American Statistical Association},
  88(423):1034--1043, 1993.

\bibitem{qmd}
F.~Griessenberger, R.~R. Junker, and W.~Trutschnig.
\newblock {On a multivariate copula-based dependence measure and its
  estimation}.
\newblock {\em Electronic Journal of Statistics}, 16(1):2206 -- 2251, 2022.

\bibitem{hewitt1965}
E.~Hewitt and K.~Stromberg.
\newblock {\em Real and Abstract Analysis}.
\newblock Springer, 1965.

\bibitem{Kallenberg}
O.~Kallenberg.
\newblock {\em Foundations of modern probability}.
\newblock Springer-Verlag, New York, 2002.

\bibitem{KK}
R.~Kannan and C.~Krueger.
\newblock {\em Advanced Analysis on the Real Line}.
\newblock Springer Verlage Publications, 1996.

\bibitem{kannan}
R.~Kannan and C.~Krueger.
\newblock {\em Advanced Analysis: on the Real Line}.
\newblock Universitext. Springer New York, 2012.

\bibitem{wcc}
T.~Kasper, S.~Fuchs, and W.~Trutschnig.
\newblock On weak conditional convergence of bivariate {A}rchimedean and
  extreme value copulas, and consequences to nonparametric estimation.
\newblock {\em Bernoulli}, 4(27):2217--2240, 2021.

\bibitem{multiArchMassKernel}
T.~M. Kasper and W.~Trutschnig.
\newblock A markov kernel approach to multivariate archimedean copulas.
\newblock In L.~A. Garc{\'i}a-Escudero, A.~Gordaliza, A.~Mayo, M.~A.
  Lubiano~Gomez, M.~A. Gil, P.~Grzegorzewski, and O.~Hryniewicz, editors, {\em
  Building Bridges between Soft and Statistical Methodologies for Data
  Science}, pages 224--230, Cham, 2023. Springer International Publishing.

\bibitem{tnorms}
E.~Klement, R.~Mesiar, and E.~Pap.
\newblock {\em Triangular Norms}.
\newblock Springer Science+Business Media Dordrecht, 2000.

\bibitem{Klenke}
A.~Klenke.
\newblock {\em Wahrscheinlichkeitstheorie}.
\newblock Springer Lehrbuch Masterclass Series, Berlin Heidelberg, 2008.

\bibitem{kosmol}
P.~Kosmol.
\newblock {\em Optimierung und Approximation}.
\newblock De Gruyter, Berlin, Boston, 2010.

\bibitem{Kuso}
N.~Kusolitsch.
\newblock Why the theorem of scheff{\'e} should be rather called a theorem of
  riesz.
\newblock {\em Periodica Mathematica Hungarica}, 61:225--229, 2010.

\bibitem{Lange1973}
K.~Lange.
\newblock {Decompositions of Substochastic Transition Functions}.
\newblock {\em Proceedings of the American Mathematical Society}, 37(2):575 --
  580, 1973.

\bibitem{multiArchNeslehova}
A.~McNeil and J.~Ne\v{s}lehov\'{a}.
\newblock Multivariate {A}rchimedean copulas d-monotone functions and
  $\ell_1$-norm symmetric distributions.
\newblock {\em The Annals of Statistics}, 37:3059--3097, 08 2009.

\bibitem{Nelsen}
R.~Nelsen.
\newblock {\em An Introduction to Copulas}.
\newblock Springer-Verlag, Berlin Heidelberg, 2006.

\bibitem{pollard}
D.~Pollard.
\newblock {\em A User's Guide to Measure Theoretic Probability}.
\newblock Cambridge Series in Statistical and Probabilistic Mathematics.
  Cambridge University Press, 2001.

\bibitem{rockafellar}
R.~Rockafellar.
\newblock {\em Convex Analysis}.
\newblock Princeton Landmarks in Mathematics and Physics. Princeton University
  Press, 1970.

\bibitem{schilling2012}
R.~Schilling, R.~Song, and Z.~Vondracek.
\newblock {\em Bernstein Functions: Theory and Applications}.
\newblock De Gruyter Studies in Mathematics. De Gruyter, 2012.

\bibitem{vaart_1998}
A.~W. v.~d. Vaart.
\newblock {\em Asymptotic Statistics}.
\newblock Cambridge Series in Statistical and Probabilistic Mathematics.
  Cambridge University Press, 1998.

\bibitem{williamson}
R.~E. Williamson.
\newblock {Multiply monotone functions and their Laplace transforms}.
\newblock {\em Duke Mathematical Journal}, 23(2):189 -- 207, 1956.

\end{thebibliography}

\end{document}